\documentclass[11pt]{amsart}
\usepackage[colorlinks=true,citecolor=magenta,linkcolor=magenta]{hyperref}
\usepackage{amssymb,amscd,amsmath,graphicx,mathtools}
\usepackage{enumerate}
\usepackage{fullpage}

\usepackage[all]{xy}
\usepackage{cleveref}

\numberwithin{equation}{section}
\hyphenation{semi-stable}

\newtheorem{theorem}{Theorem}[section]
\newtheorem{lemma}[theorem]{Lemma}
\newtheorem{proposition}[theorem]{Proposition}
\newtheorem{corollary}[theorem]{Corollary}

\theoremstyle{definition}
\newtheorem{definition}[theorem]{Definition}
\newtheorem{conjecture}[theorem]{Conjecture}
\newtheorem*{acknowledgment}{Acknowledgments}

\newtheorem{setup}[theorem]{Setup}

\theoremstyle{remark}
\newtheorem{remark}[theorem]{Remark}
\newtheorem{example}[theorem]{Example}

\DeclareMathOperator{\Ann}{Ann}
\DeclareMathOperator{\Ass}{Ass}

\DeclareMathOperator{\hgt}{ht}

\newcommand{\NN}{{\mathbb N}}

\def\C{{\mathcal C}}

\def\R{{\mathcal R}}

\def\R{{\mathcal R}}

\def\H{{\mathcal H}}

\def\aa{{\mathfrak a}}
\def\mm{{\mathfrak m}}

\def\pp{{\mathfrak p}}

\def\bb{{\mathfrak b}}
\def\cc{{\mathfrak c}}
\def\dd{{\mathfrak d}}
\def\ee{{\mathfrak e}}

\def\ahat{\widehat{\alpha}}

\def\vhat{\widehat{v}}

\def\1{{\bf 1}}
\def\0{{\bf 0}}


\begin{document}
	
	\title{$F$-Thresholds of Filtrations of Ideals}
	
	\author{Mitra Koley}
	\address{Theoretical Stat-Math Unit, Indian Statistical Institute, 203 B.T Road, Kolkata India-700108}
	\email{mitra.koley@gmail.com}

	\author{Arvind Kumar}
	\address{Department of Mathematical Sciences, New Mexico State University, 1780 E University Ave, Las Cruces, US-88003}
	\email{arvkumar@nmsu.edu}

	\keywords{$F$-thresholds, symbolic $F$-splitting, Rees valuation, filtrations of ideals}
	\subjclass[2010]{13A35, 13A30, 13A18}
	
	\begin{abstract}
		In this article we extend the notion of  the $F$-thresholds of ideals to the $F$-thresholds for filtrations of ideals. Existence of $F$-thresholds of filtrations are established for various types of filtrations. Moreover various necessary and sufficient conditions for finiteness of $F$-thresholds are given.  We also give some effective upper bounds of the symbolic $F$-thresholds.  A sufficient condition for symbolic $F$-splitting in terms of its symbolic $F$-threshold is given. We compute the symbolic $F$-thresholds of various determinantal ideals, $F$-K\"onig ideals, and many more.
			\end{abstract}
	
	\maketitle


\section{Introduction} \label{sec.intro}
The $F$-thresholds were introduced by  Musta\c{t}\u{a}, Takagi and Watanabe  for $F$-finite regular rings as a prime characteristic invariant of singularities \cite{MTW}. This invariant is positive characteristic analogue of log canonical thresholds, an invariant of singularities defined in characteristics $0$. In the introductory paper \cite{MTW}, it was shown that the $F$-thresholds coincide with the jumping exponents for the generalized test ideals defined by Hara and Yoshida in \cite{HY}.
 
 In a subsequent paper by Huneke,  Musta\c{t}\u{a}, Takagi and Watanabe, this notion was generalized for arbitrary rings of prime characteristics \cite{HMTW}. Given a ring $R$ of prime characteristic $p>0$ and ideals $\mathfrak{a}$ and $I$, such that $\mathfrak{a}$ is contained in radical of $I$, the $F$-threshold  asymptotically measures the containment of the powers of $\mathfrak{a}$ in the Frobenius powers of $I$. 
 Given $\mathfrak{a}$ and $I$ as described above, they defined
 for all $q=p^e$, 
 $$\nu^I_{\mathfrak{a}}(q):=\max\{r: \mathfrak{a}^r\not\subseteq I^{[q]}\},$$ where $I^{[q]}=(f^q: f\in I)$ and
 $$\mathcal{C}^I_+(\mathfrak{a}):=\limsup_{e\to\infty}\frac{\nu^I_{\mathfrak{a}}(p^e)}{p^e} \hspace{0.5cm}\textit{and}\hspace{0.5cm} \mathcal{C}^I_-(\mathfrak{a}):=\liminf_{e\to\infty}\frac{\nu^I_{\mathfrak{a}}(p^e)}{p^e}.$$
 When these two limits coincide, the common value is denoted by 
 $\mathcal{C}^I(\mathfrak{a})$ and is called the $F$-threshold of $\mathfrak{a}$ with respect to $I$. In the same article, the existence of the limit is shown for $F$-pure rings.  When the ring is regular the limit always exists as  in this case the sequence $\{\frac{\nu^I_{\mathfrak{a}}(p^e)}{p^e}\}$ becomes a bounded, monotone increasing sequence. Later in \cite{DNP18}, De-Stefani,  N\'{u}\~{n}ez Betancourt and P\'{e}rez
 showed the existence of the $F$-thresholds in full generality.

  Furthermore in \cite{HMTW}, the authors showed its relations  with the Hilbert-Samuel multiplicity, tight closure, and integral closure. Because of its important connections with various areas of commutative algebra and geometry \cite{MMK09, MMK08,  TK15, D09, DKK14,  KMK10, F-threshold-det-ring, BI20, VK21}, this invariant becomes an interesting object to study. The $F$-thresholds of defining ideals of Calabi-Yau hypersurfaces with respect to the homogeneous maximal ideals were computed  by Bhatt and Singh in \cite{BS}. In \cite{Her}, Hern\'andez computed the $F$-threshold of a principle ideal generated by a  binomial a polynomial ring with respect to the homogeneous maximal ideal. In \cite{F-threshold-det-ring}, Miller, Singh, Varbaro computed $F$-thresholds for determinantal ideals with respect to again the homogeneous maximal ideal. Linear programs turn out to be useful in order to compute $F$-thresholds of various classes of ideals in polynomial rings, see \cite{Adam23}. 
  
  The main aim of this article is to extend the notion of the  $F$-thresholds for filtrations of ideals, study their existence and various kinds of effective bounds. We compute $F$-thresholds of symbolic power filtrations of  various ideals. Our work is inspired by \cite{AJL21} and \cite{HKN22}. The idea of studying the containment behaviour of a filtration of ideals inside Frobenius powers of an ideal comes from the recent works of H\`a, Kumar, Nguyen and Nguyen see \cite{HKN22} where the authors studied containment behaviour between two filtrations. In \cite{AJL21}, the authors studied $F$-singularities defined for filtrations, precisely they introduced $F$-split filtrations, a new notion of $F$-singularity defined for filtraions. In this article, we give a numerical criterion in terms of symbolic $F$-threshold for a symbolic $F$-split filtration (Theorem 5.12). The paper is organised as follows.
  
  In Section 2, we  recall  some definitions and useful results on filtrations, valuations, $F$-singularities, and graphs which are required in later sections. We also give a brief discussion on $F$-thresholds of ideals.
  
  In Section 3, we extend the notion of the $F$-thresholds for filtrations of ideals as follows:
  Let $I$ be an ideal of a ring $R$ of prime characteristic $p>0$ and $\mathfrak{a}_{\bullet}= \{\mathfrak{a}_i\}_{i\geq 0}$ be a filtration of ideals in $R$. For every non-negative integer $e$, define $$\nu_{\mathfrak{a}_{\bullet}}^I (p^e) := \sup \{ r \in \mathbb{Z}_{\geq 0} \; : \;  \mathfrak{a}_r \not\subseteq I^{[p^e]} \}.$$
 Define $$\mathcal{C}_+^I(\aa_{\bullet}):=\limsup_{e \to \infty} \frac{\nu_{\aa_{\bullet}}^I(p^e)}{p^e} {\in\mathbb{R}_{\geq 0}\cup\{\pm \infty\}} \text{ and } \mathcal{C}_-^I(\aa_{\bullet}):=\liminf_{e \to \infty} \frac{\nu_{\aa_{\bullet}}^I(p^e)}{p^e}{\in\mathbb{R}_{\geq 0}\cup\{\pm \infty\}}.$$ 
 If $\mathcal{C}_+^I(\aa_{\bullet})=\mathcal{C}_-^I(\aa_{\bullet})\in \mathbb{R}_{\geq 0}$, then we denote it by $\mathcal{C}^I(\aa_{\bullet})$ and call it the \emph{$F$-threshold of $\aa_{\bullet}$ with respect to $I$}. When  $(R,\mathfrak{m})$ is a local ring and $I=\mathfrak{m}$, the maximal ideal of $R$, then $\mathcal{C}^{\mathfrak{m}}(\aa_{\bullet})$ (if exists) is called the \emph{$F$-threshold of $\aa_{\bullet}$}. When $\aa_{\bullet}=\{\aa^{(i)}\}_{i\ge 0}$, a symbolic power filtration, then $\mathcal{C}^{\mathfrak{m}}(\aa_{\bullet})$ is called the \emph{ symbolic $F$-threshold of $J$}.
  We show that any positive real number can be attained as an $F$-threshold of a filtration. After discussing some basic properties of $F$-thresholds for filtrations we compare $F$-thresholds of two filtrations and give conditions for the equality.
   As consequences we get the existence of the $F$-thresholds for integral closure power filtration (for analytically unramified rings), tight closure filtration of an ideal. Rest of the section is devoted to discussions on various conditions for finiteness of $\mathcal{C}_+^I(\aa_{\bullet})$ of any filtration $\aa_{\bullet}$. We show that $F$-thresholds exist for Noetherian filtrations, symbolic power filtrations in polynomial rings.
   
   In Section 4, we identify another class of filtrations for which the $F$-thresholds  with respect to an ideal $I$ exist. We call them the $(I,p)$-admissible filtration. Ordinary power filtrations, symbolic power filtrations and certain integral closure power filtrations appear as examples of $(I,p)$-admissible filtrations.  We also show  that $(I,p)$-admissible filtrations are closed under standard arithmetic operations, i.e., intersections, products and  binomial sums of filtrations and prove related results on the $F$-thresholds. More over for polynomial rings we  compute the $F$-threshold of intersection of two monomial ideals in terms of the $F$-thresholds of the monomial ideals. This result enable us to compute  $F$-thresholds of various classes of ideals in a polynomial ring. 
   
   In Section 5 we study $F$-thresholds of symbolic power filtrations deeply. we show that for any regular ring $R$,  the $F$-threshold of  an symbolic power filtration of an ideal $I$ is bounded above by 
   big-height of  $I$. We show that for various determinantal ideals, $F$-K\"onig ideals, the symbolic $F$-thresholds are height of the respective ideals. More generally we give an equivalent criterion for $F$-thresholds for symbolic power filtration of an ideal $\aa$ with respect to an ideal $I$, to be big-height of $\aa$ in terms of ideal containment.
   We show that if the symbolic $F$-threshold of a radical ideal in an $F$-finite regular local ring is big-height of that ideal, then the ideal is unmixed and symbolic $F$-split. We conjecture that the converse is also holds.
   
   \begin{conjecture}
    Let $(R,\mathfrak{m})$ be an $F$-finite regular local ring and $I$ be an unmixed radical ideal. Suppose $I$ is symbolic $F$-split, then $\mathcal{C}^{\mathfrak{m}}(I^{(\bullet)})=\text{big-height of } I$.
   \end{conjecture}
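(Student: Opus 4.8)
The plan is as follows. Since $I$ is unmixed, $\text{big-height}(I)=\height(I)=:h$, and the $F$-threshold $\mathcal{C}^{\mathfrak{m}}(I^{(\bullet)})$ exists because $I^{(\bullet)}$ is $(I,p)$-admissible (Section~4). The general upper bound of Section~5 already gives $\mathcal{C}^{\mathfrak{m}}(I^{(\bullet)})\le h$, so the whole content is the reverse inequality $\mathcal{C}_-^{\mathfrak{m}}(I^{(\bullet)})\ge h$. Unwinding the definition, this means producing, for all $e\gg0$, an integer $r_e$ with $I^{(r_e)}\not\subseteq\mathfrak{m}^{[p^e]}$ and $r_e/p^e\to h$; guided by the case $I=\mathfrak{m}$ (where $\mathfrak{m}^{(r)}\not\subseteq\mathfrak{m}^{[p^e]}$ precisely for $r\le d(p^e-1)$, $d=\dim R$) the target is
$$I^{(h(p^e-1))}\ \not\subseteq\ \mathfrak{m}^{[p^e]}\qquad\text{for all }e\gg0.$$
By the ``ideal containment'' criterion of Section~5 for the symbolic $F$-threshold to equal the big-height, it suffices to establish this single family of non-containments.

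Next I would reduce all of these to a single instance. Using only $I^{(a)}I^{(b)}\subseteq I^{(a+b)}$ and $(I^{(a)})^{[p]}\subseteq I^{(pa)}$ one has $(I^{(h(p^{e-1}-1))})^{[p]}\cdot I^{(h(p-1))}\subseteq I^{(h(p^e-1))}$, and a direct computation in the $R^{p^e}$-basis expansion shows that if $g\notin\mathfrak{m}^{[p^{e-1}]}$ and $f\notin\mathfrak{m}^{[p^e]}$ then $g^pf\notin\mathfrak{m}^{[p^e]}$ (the $x^0$-coefficient of $g^pf$ is a unit plus an element of $\mathfrak{m}$). Since $\bigcap_e\mathfrak{m}^{[p^e]}=0$, for all large $e$ there is $f\in I^{(h(p-1))}\setminus\mathfrak{m}^{[p^e]}$; hence a witness for $I^{(h(p^{e-1}-1))}\not\subseteq\mathfrak{m}^{[p^{e-1}]}$ yields, via $g^pf$, a witness for $I^{(h(p^e-1))}\not\subseteq\mathfrak{m}^{[p^e]}$. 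So it is enough to prove $I^{(h(p^{e_0}-1))}\not\subseteq\mathfrak{m}^{[p^{e_0}]}$ for one sufficiently large $e_0$.

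This base case is where the hypotheses on $I$ must be used, and is the main obstacle. Via a Fedder-type criterion, the assumption that $I$ is symbolic $F$-split supplies $u\in R$ with $u\notin\mathfrak{m}^{[p]}$ and $u\,I^{(i)}\subseteq(I^{(\lceil i/p\rceil)})^{[p]}$ for all $i\ge1$ (in particular $uI\subseteq I^{[p]}$); iterating, $u^{(e)}:=u^{1+p+\cdots+p^{e-1}}\notin\mathfrak{m}^{[p^e]}$ and $u^{(e)}I^{(i)}\subseteq(I^{(\lceil i/p^e\rceil)})^{[p^e]}$, so $u^{(e)}\in(I^{[p^e]}:_R I)$. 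Since $I$ is radical and unmixed, at each minimal prime $\mathfrak{p}$ of $I$ — all of height $h$ — the ring $R_{\mathfrak{p}}$ is regular local of dimension $h$ with $IR_{\mathfrak{p}}=\mathfrak{p}R_{\mathfrak{p}}$, and $(I^{[p^e]}:_R I)R_{\mathfrak{p}}=\mathfrak{p}^{[p^e]}R_{\mathfrak{p}}+(y_1\cdots y_h)^{p^e-1}R_{\mathfrak{p}}$ for a regular system of parameters $y_1,\dots,y_h$ of $R_{\mathfrak{p}}$, the socle generator $(y_1\cdots y_h)^{p^e-1}$ having $\mathfrak{p}$-adic order exactly $h(p^e-1)$. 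The plan is to combine $u^{(e)}\notin\mathfrak{m}^{[p^e]}$ with this local description — simultaneously at all minimal primes of $I$ — to manufacture an actual element $f\in I^{(h(p^e-1))}$ with $f\notin\mathfrak{m}^{[p^e]}$. The hard point is exactly this globalization: passing from the per-prime picture (where height $h$ automatically produces an element of the right order outside $\mathfrak{p}^{[p^e]}$) to a single global element of $I^{(h(p^e-1))}$, and certifying that it escapes $\mathfrak{m}^{[p^e]}$ — this is where the global $F$-splitting data (the element $u^{(e)}$ and the containments $u^{(e)}I^{(i)}\subseteq(I^{(\lceil i/p^e\rceil)})^{[p^e]}$) must be brought to bear, and getting the precise exponent $h(p^e-1)$ rather than something weaker is delicate. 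To calibrate the argument I would first re-derive the cases already settled in Section~5 (determinantal ideals, $F$-K\"onig ideals, and the others) from this viewpoint, extract the uniform reason the relevant socle class survives, and — most promisingly — restate the base case directly as the ideal containment appearing in the Section~5 criterion, so that the conjecture becomes the implication ``$I$ symbolic $F$-split and unmixed $\Rightarrow$ that containment''.
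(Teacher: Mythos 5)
First, a point of comparison is impossible here in the usual sense: the paper does not prove this statement at all --- it is posed as a conjecture. The paper only proves the reverse implication (if $\mathcal{C}^{\mathfrak{m}}(\aa^{(\bullet)})$ equals the big-height then $\aa$ is unmixed and symbolic $F$-split, \Cref{big-height}(3)), gives the ideal-containment reformulation in \Cref{big-height}(2), and verifies the conjectured equality for particular families (determinantal ideals, $F$-K\"onig ideals via \Cref{P:F-konig}, squarefree monomial ideals via \Cref{sym-mon}). So the only question is whether your argument actually settles the conjecture, and it does not: it is a reduction scheme whose key step you yourself leave open, and the reduction as written contains an error.

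Concretely: (i) the multiplication lemma you invoke, ``if $g\notin\mathfrak{m}^{[p^{e-1}]}$ and $f\notin\mathfrak{m}^{[p^e]}$ then $g^pf\notin\mathfrak{m}^{[p^e]}$'', is false. In $R=\mathbb{K}[[x]]$ with $p=2$, $e=2$, take $g=x\notin(x^2)=\mathfrak{m}^{[p^{e-1}]}$ and $f=x^3\notin(x^4)=\mathfrak{m}^{[p^e]}$; then $g^pf=x^5\in\mathfrak{m}^{[p^e]}$. The correct statement, which follows from flatness of Frobenius via $\mathfrak{m}^{[p^e]}:g^p=(\mathfrak{m}^{[p^{e-1}]}:g)^{[p]}\subseteq\mathfrak{m}^{[p]}$, requires $f\notin\mathfrak{m}^{[p]}$, not merely $f\notin\mathfrak{m}^{[p^e]}$. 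With that correction your recursion does work, but it then shows that everything hinges on the single containment $I^{(h(p-1))}\not\subseteq\mathfrak{m}^{[p]}$. (ii) That containment is exactly the open content of the conjecture: by \cite[Corollary 5.10]{AJL21} it \emph{implies} symbolic $F$-splitness, and by \Cref{big-height}(2) (together with the corrected recursion) it is \emph{equivalent} to $\mathcal{C}^{\mathfrak{m}}(I^{(\bullet)})=h$; the conjecture asks precisely whether symbolic $F$-splitness forces it. Your ``base case'' --- globalizing the socle elements $(y_1\cdots y_h)^{p^e-1}$ at the minimal primes using a splitting element $u$ --- is exactly where a new idea is required, and none is supplied; your closing suggestion to restate the base case as the containment in the Section~5 criterion is circular, since that containment is the conclusion to be proved. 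A further caveat: the definition of symbolic $F$-split allows the splitting $\phi$ to depend on $n$, so the existence of a single $u\notin\mathfrak{m}^{[p]}$ with $uI^{(i)}\subseteq(I^{(\lceil i/p\rceil)})^{[p]}$ for all $i$ is itself a uniform Fedder-type statement that must be quoted from or proved along the lines of \cite{AJL21}, not asserted. In short, the proposal correctly locates the difficulty (and its framing via \Cref{big-height} is consistent with the paper's), but it does not constitute a proof.
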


   It is known from \cite{AJL21}, determinantal ideals corresponding to generic, Pfaffian, symmetric, Hankel matrices, unmixed $F$-K\"onig ideals, squarefree monomial ideals are symbolic $F$-split. Hence the conjecture is true for those ideals. One can not replace the symbolic $F$-split by $F$-split, as we show by an example that the conjecture is false if we assume $R/I$ is only $F$-split 
   
   In section 6, we focus our attention to $F$-thresholds of homogeneous filtrations with respect to the maximal monomial ideal in a polynomial ring and its interlinkage with valuations. Our first contribution of this section is to give an equivalent criterion of boundedness of $F$-thresholds of any homogeneous filtration in terms of positiveness of Skew Waldschmidt constant of any monomial valuation. We also give various examples of ideals (both valuations and graph theoretic) where the bounds  are attained. Next we give a formula for  $F$-threshold of any monomial ideal in terms of its Rees valuations. We  also give a bound of $F$-threshold of any monomial filtration in terms of valuations. Our next contribution is finding a formula for $F$-threshold of symbolic power filtration of any square-free monomial ideal $I$  in terms of its monomial valuations associated to its minimal primes and the value is equal to the height of $I$.
 
 \begin{acknowledgment} 
The first author was supported by DST, Govt. of India under the DST-INSPIRE Faculty Scheme (Ref no: DST/INSPIRE/04/2020/001266). 
\end{acknowledgment}

 \section{Preliminaries}
 In this section we recall some definitions and facts which we use in later sections. All rings considered in this article are Noetherian commutative ring with unity. 
 
 \subsection{Filtrations and Rees Algebras:}
 Let $R$ be a ring. A \emph{filtration} of ideals in $R$ is a sequence of ideals $\{\mathfrak{a}_i\}_{i \ge 0}$ that satisfy the followings:
 \begin{enumerate}
  \item $\mathfrak{a}_0=R$,
  \item  $\mathfrak{a}_{i+1}\subseteq {\mathfrak{a}_{i}}$ for all $i \ge 0$,
  \item  $ \mathfrak{a}_i\mathfrak{a}_j\subseteq \mathfrak{a}_{i+j}$ for all $i,j \ge 0$.
 \end{enumerate}
We use $\aa_\bullet$ to denote the filtration $\{\mathfrak{a}_i\}_{i \ge 0}$. Let $R$ be a graded ring, a filtration $\aa_{\bullet}$ is called \emph{homogeneous}, if for each $i$, $\aa_i$ is a homogeneous ideal of $R$.

 Well known examples of filtrations arise from a non-zero proper ideal in $R$. In the following, we illustrate a few standard examples  of filtrations.
 \begin{example} 
Let $\mathfrak{a}, \bb$ be non-zero proper ideals in $R$.
  \begin{enumerate}
  \item The most well known and well studied example of a filtration is the $\aa$-adic filtration. We use $\aa^{\bullet}$ to denote the $\aa$-adic filtration, i.e., $\mathfrak{a}^\bullet=\{\mathfrak{a}^i\}_{i\ge 0}$.
  \item The  integral closure of  an ideal $\aa$, denoted by $\overline{\aa}$, is defined as the ideal  $\overline{\aa}:=(x\in R : \text{ there exist } n \in \NN \text{ and } a_i \in \aa^i \text{ for } 0 \le i \le n \text{ such that }
  x^n+a_1 x^{n-1}+ \cdots + a_n= 0).$ An ideal $\aa$ is called \emph{integrally closed}  if $\aa=\overline{\aa}$.
  Integral closure of powers of $\aa$ forms a filtration and it is known as the \textit{normal filtration} of $\aa$. We use $\overline{\mathfrak{a}^{\bullet}}$ to denote it, i.e., $\overline{\mathfrak{a}^\bullet}=\{\overline{\mathfrak{a}^i}\}_{i\ge 0}$.
  \item \textit{Saturation of powers of $\aa$ with respect to $\bb$} forms a filtration, see \cite{HJKN2023}. We use $\aa^{(\bullet)}_{\bb}=\{\aa^i: \bb^{\infty}\}_{i\ge 0}$, the notation from \cite{HJKN2023}, to denote this filtration. Here $\aa^{(i)}_\bb =\mathfrak{a}^i:\bb^{\infty}=\cup_{n\ge 1} (\aa^i : \bb^n)$. In \cite[Lemmas 2.1, 2.2]{HJKN2023}, the authors proved that both the notions of symbolic powers of $\aa$ can be realized as the saturated powers of $\aa$ with respect to appropriate ideals. 
  \item The $i$-th symbolic power of $\aa$, denoted by $\aa^{(i)}$, is defined as $\cap_{\pp \in \Ass(\aa)} (\aa^iR_{\pp} \cap R).$ As said above,  \textit{symbolic powers} of $\aa$ forms a filtration. We denote this filtration by $\aa^{(\bullet)}=\{\aa^{(i)}\}_{i \ge 0}.$
    \item Let $f : \NN \to \mathbb{R}_{>0}$ be a subadditive sequence. Then $\mathfrak{a}_n:=\mathfrak{a}^{\left\lceil f(n) \right\rceil}$ is a filtration of ideals.
 \end{enumerate}
 \end{example}
 
Let $\aa_\bullet$ be a filtration of ideals in $R$. It is easy to note that $\sqrt{\aa_i}=\sqrt{\aa_1}$ for all $i \ge 1$. Therefore, we call $\sqrt{\aa_1}$ to be the \emph{radical of} $\aa_\bullet$, and denote it by $\sqrt{\aa_\bullet}.$ Similar to Rees algebra associated to any ideal in a ring, one can  also define  Rees algebra  associated to any filtration $\aa_\bullet$ of ideals. The \emph{Rees Algebra} of $\aa_\bullet$ is defined as $$\mathcal{R}(\aa_\bullet):= \bigoplus_{i \ge 0} \aa_i t^i \subseteq R[t].$$ We say that a filtration $\aa_\bullet$ is  \emph{Noetherian} if the asociated Rees Algebra $\mathcal{R}(\aa_\bullet)$ is Noetherian. For $k \in \NN$, the \textit{$k$-th Veronese subalgebra} of $\R(\aa_\bullet)$ is defined to be $$\R^{[k]}(\aa_\bullet) := \bigoplus_{r \ge 0} \aa_{kr} t^{kr} \subseteq R[t].$$  It is well known that, see \cite[Proposition 2.1]{P88}, if $\aa_\bullet$ is a Noetherian filtration, then $\R^{[k]}(\aa_\bullet)$ is a standard graded $R$-algebra for some $k$.

 \subsection{Valuations}
  
 Here we review some basic facts about valuations.  Our basic reference for valuations is \cite{HS06}.
 
 Let $K$ be a field and $G$ be a totally ordered Abelian group (written additively). A valuation on $K$
is a group homomorphism $v :K \setminus \{0\} \to G\cup \{\infty\}$ such that 
\begin{enumerate}
 \item $v(0)=\infty$, and
 \item for all $x$ and $y$ in $K$, $v(x + y) \geq  \min\{v(x), v(y)\}$.
\end{enumerate}

Let $R$ be a domain and $K$ denote its field of fractions. A valuation $v$ on $K$ is \emph{supported on $R$}  if $v(x)\geq 0$ for all $x\in R\setminus \{0\}$. Here in this article we only consider  valuations on the fields of fractions of polynomial rings with $G=\mathbb{R}$, i.e. only real valuations. A valuation $v$ on the field of fractions $K$ of a polynomial ring $R$ is said to be a \emph{monomial valuation}, if for every non-zero polynomial 
$f \in R$,  $v(f )=\min\{  v(u) : u \text{ is a monomial term in } f\} $. 

Since in this article we only consider valuations on the fields of fractions of polynomial rings, in the following we recall the definition of Rees valuations of an ideal in a domain.

  Let $R$ be a domain with field of fractions $K$ and $I$ be an ideal in $R$. A set of \emph{Rees valuation domains} of $I$ is a set $\{V_1, \cdots, V_s\}$ consisting of valuation domains, with the following conditions:
  \begin{enumerate}
   \item For every $i$, $R\subseteq V_i\subseteq K$.  Moreover, each $V_i$ is Noetherian and it is not a field.
   \item  For each $n \in \mathbb{N}$, 
   $\overline{I^n} = \bigcap_{i=1}^s(I^nV_i) \cap R$.
   \item The set $\{V_1, \cdots, V_s\}$ satisfying the previous conditions is minimal possible.
   \end{enumerate}
The valuations $\{v_1,\cdots,v_s\}$ that correspond to valuation domains $V_1,\cdots,V_s$ respectively are called a set of \emph{Rees valuations} of $I$.

\begin{theorem}
  \begin{enumerate}
  \item \cite[Theorem 10.1.6, Theorem 10.2.2]{HS06} Every ideal $I$ in a Noetherian domain has a unique set (up to equivalence) of Rees valuations.
   \item  \cite[Proposition 10.3.4]{HS06} Let $I$ be a monomial ideal in a polynomial ring. Then every Rees valuation of $I$ is a monomial valuation.
  \end{enumerate}

 \end{theorem}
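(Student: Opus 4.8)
Both assertions are classical results recorded in \cite[Chapter 10]{HS06}, so the plan is to indicate the structure of the standard proof rather than to reproduce it in full. For part (1), I would organise everything around the Rees algebra $\mathcal R = R[It] = \bigoplus_{n\ge 0} I^n t^n \subseteq R[t]$ and its integral closure $\overline{\mathcal R} = \bigoplus_{n\ge 0}\overline{I^n}\,t^n$, and then pass to the normalised blowup $\pi\colon \widetilde X \to \Spec R$ of $I$. On $\widetilde X$ the sheaf $I\O_{\widetilde X}$ is invertible, hence --- $\widetilde X$ being normal --- already integrally closed, and it cuts out an effective Cartier divisor $E = \sum_{i=1}^s a_iE_i$. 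Each component $E_i$ has codimension one in the normal scheme $\widetilde X$, so $V_i := \O_{\widetilde X, E_i}$ is a DVR with fraction field $K = \operatorname{Frac}(R)$ and $v_i := \operatorname{ord}_{E_i}$ is its valuation; the bulk of the work is the identity $\overline{I^n} = \bigcap_{i=1}^s (I^nV_i)\cap R$ for every $n$, which says that the normalised blowup computes integral closures of all powers simultaneously and follows from the integral closedness of the invertible sheaves $I^n\O_{\widetilde X}$ together with the valuative criterion for integral closure (the Noetherian hypothesis guaranteeing $\widetilde X$ is of finite type and $E$ has finitely many components). For minimality, and hence uniqueness up to equivalence, I would check that no $v_j$ is redundant: since $E_j$ is a genuine component of $E$, for large $n$ one produces an element of $\bigcap_{i\ne j}(I^nV_i)\cap R$ that is not in $\overline{I^n}$, so $v_j$ cannot be dropped from the intersection formula. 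The one point needing care is that $\overline{\mathcal R}$ may fail to be Noetherian over an arbitrary Noetherian domain, which is why one works with the normalised blowup scheme rather than with $\overline{\mathcal R}$ directly.

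For part (2), take $R = k[x_1,\dots,x_n]$ and $I$ a monomial ideal; here I would use the toric picture. Since $\overline{I^n}$ is the monomial ideal spanned by the $x^a$ with $a \in n\cdot \mathrm{NP}(I)\cap\ZZ^n$, where $\mathrm{NP}(I) = \operatorname{conv}\{a: x^a \in I\} + \RR_{\ge 0}^n$ is the Newton polyhedron, the normalised blowup of $I$ is the toric variety $X_\Sigma$ attached to the normal fan $\Sigma$ of $\mathrm{NP}(I)$, which subdivides the orthant fan of affine $n$-space. Every component of the exceptional divisor then corresponds to a ray of $\Sigma$ with primitive generator $\mathbf u \in \ZZ_{\ge 0}^n$, and the order of vanishing along it is exactly the monomial valuation $v_{\mathbf u}$ with $v_{\mathbf u}(x^a) = \langle \mathbf u, a\rangle$, so every Rees valuation of $I$ is monomial. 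A more elementary variant that avoids toric geometry: the torus $T = (k^*)^n$ acts on $R$ fixing each $\overline{I^n}$, so by the uniqueness in part (1) it permutes the finitely many Rees valuation rings, and connectedness of $T$ forces each to be $T$-stable; and a $T$-invariant valuation $v$ on $k(x_1,\dots,x_n)$ must be monomial, because for $f = \sum_a c_a x^a$ the span of the translates $\{t\cdot f : t\in T\}$ is $\bigoplus_{a\in\supp f} k x^a$ by linear independence of characters, whence each $x^a$ is a $k$-combination of translates of $f$ and so $v(x^a)\ge v(f)$, the reverse inequality being automatic.

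The step I expect to be the main obstacle is the simultaneous integral-closure formula in (1) together with the minimality claim, since that is exactly where one has to use normality and the valuative criterion in an essential way rather than manipulate graded rings formally; everything else (reducing to the normalised blowup, and the toric identification in (2)) is comparatively soft.
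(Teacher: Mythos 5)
This theorem is quoted background: the paper supplies no proof at all and simply cites \cite[Theorems 10.1.6 and 10.2.2, Proposition 10.3.4]{HS06}, so there is no argument in the paper to compare yours against. On its own terms your sketch follows the standard route (normalized blowup and divisorial valuations along the components of the exceptional Cartier divisor for (1); Newton polyhedron and its normal fan for (2)), and the core identity $\overline{I^n}=\bigcap_i (I^nV_i)\cap R$ via integral closedness of the invertible sheaves plus the valuative criterion is the right mechanism.

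Three points, however, are genuine gaps as written. First, your proposed fix for the finiteness issue does not work: the normalized blowup of an arbitrary Noetherian domain need not be Noetherian or of finite type any more than $\overline{\mathcal R}$ is, so Noetherianity of $R$ alone does not guarantee ``finitely many components $E_i$ with DVR local rings.'' The ingredient actually used in the general case is the Mori--Nagata theorem: the integral closure of a Noetherian domain is a Krull domain, hence has only finitely many height-one primes containing a given nonzero element and DVR localizations there; this is where the finiteness of the set of Rees valuations comes from when $R$ is not excellent. Second, checking that no $v_j$ is redundant shows your set is minimal, i.e.\ is \emph{a} set of Rees valuations, but ``minimality, and hence uniqueness up to equivalence'' skips the content of \cite[Theorem 10.2.2]{HS06}: one must still show that \emph{any} minimal family of Noetherian valuation rings computing all $\overline{I^n}$ agrees with yours up to equivalence, which is a separate argument. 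Third, your ``elementary'' variant of (2) needs more than connectedness of $T$: the action on the finite set of Rees valuation rings only exhibits the stabilizer of each as a finite-index subgroup of the abstract group $(k^{*})^{n}$, and such subgroups can be proper unless $k^{*}$ is divisible (e.g.\ $k$ algebraically closed); over the finite fields that are natural in this characteristic-$p$ paper the torus-point argument fails outright. Your first, toric/Newton-polyhedron argument is the robust one and is essentially the proof in \cite[Proposition 10.3.4]{HS06}, so (2) should rest on that rather than on torus equivariance.
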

\begin{remark}
\label{valuative-criterion}
 Let $R$ be a domain with field of fractions $K$ and $I$ be an ideal in $R$. For any valuation $v$ on $K$, we denote $v(I):=\min \{v(x): x\in I\setminus \{0\}\}$. Let $\{v_1, \cdots, v_s\}$ be the Rees valuations of $I$, then for each $n>0$, integral closure of $I^n$ is given by
 $$\overline{I^n} = \{r\in R : v_i(r) \geq nv_i(I), \text{ for  all } 1\leq i\leq s\}.$$
\end{remark}

Next we recall the definition of (skew) Waldschmidt constant of a filtration from  \cite{HKN22}. 
 Let $R$ be a graded domain and $K$ denote its field of fractions.
    Let $v$ be a valuation on $K$ which is supported on $R$.  
Let $\aa_{\bullet}$ be a filtration of ideals in $R$.
 Then \emph{skew Waldschmidt constant} of $\aa_\bullet$ with respect to $v$ is defined to be
$$\vhat(\aa_\bullet):= \lim\limits_{n \rightarrow \infty} \dfrac{v(\aa_n)}{n} = \inf\limits_{n \in \NN} \dfrac{v(\aa_n)}{n}.$$ 
It can be seen that for any filtration  $\aa_\bullet$, $\{v(\aa_i)\}_{i \ge 1}$ is a non-negative sub-additive sequence. Thus, by Fekete's Lemma, the last equality of the definition follows.

Assume $R$ is standard graded. Let $\aa_{\bullet}$ be a homogeneous filtration of ideals in $R$ and $\alpha$ denote the valuation on $K$,  defined  as follows: let $f=f_{d_1}+\cdots+f_{d_k}$ be a polynomial in $R$, where each $i$, $f_{d_i}$ denotes its nonzero degree $d_i$-th homogeneous part, define $\alpha(f):=\min_i d_i$. Then $\alpha$ is called the \emph{degree valuation} and $\hat{\alpha}(\aa_\bullet)$
is called the \emph{Waldschmidt constant} of the  filtration $\aa_{\bullet}$.

\begin{remark}
 \begin{enumerate}
  \item One can note that $0 \le \vhat(\aa_\bullet) \le \frac{v(\aa_n)}{n} \le v(\aa_1).$
  \item In general, the skew-Waldschmidt constant of a filtration $\aa_\bullet$ with respect to a valuation $v$ need not be positive, for example, see \cite[Example 2.8]{HKN22}.
 \end{enumerate}

\end{remark}

  \subsection{Singularities in prime characteristics and $F$-thresholds}
Let $R$ be a Noetherian ring of prime characteristic $p>0$, and let $F : R \to R$ denote the Frobenius homomorphism that sends $r\mapsto r^p,$  for any $r\in R$.
A ring $R$ of prime characteristic $p>0$ is called \emph{$F$-finite} if $R$ becomes a finite $R$-module via the Frobenius map. For a reduced ring $R$, let $R^{1/q}$ denote the ring  of all $q$-th roots of $R$. One can note that a reduced ring $R$ is $F$-finite if and only of $R^{1/p}$ is module finite over $R$.

For  every positive integer $e$, we write $q=p^e$. For any $R$-ideal $I=(f_1,\cdots,f_n)$, let $I^{[q]}:=(f_1^q,\cdots, f_n^q)$.

 In the following we recall a few definition of singularities in prime characteristic that are defined in terms of action of the Frobenius.
 \begin{definition}
  Let $R$ be a ring of prime characteristics $p>0$ and $I$ be an ideal of $R$. 
  \begin{enumerate}
   \item The \emph{Frobenius closure} of $I$,  is the ideal, denoted by $I^{F}$,  formed by the elements $x\in R$ such that $x^q\in I^{[q]}$ for all $q=p^e\gg 0$. An ideal $I$ is called \emph{Frobenius closed} if $I^F=I$.
   \item  The \emph{tight closure} of $I$,  is the ideal denoted by $I^{*}$ and formed by the elements $x\in R$ such that there exists a $c\in R\setminus {\cup}_{\mathfrak{p}\in Min (R)} \mathfrak{p}$ and  $cx^q\in I^{[q]}$ for all $q=p^e\gg 0$. 
   An $R$-ideal  $I$ is \emph{tightly closed} if $I^*=I$. One notes that $I\subseteq I^F\subseteq I^*\subseteq \overline{I}$.
   Tight closure of powers of  an ideal $\aa$ forms a filtration of ideals in $R$. We use $(\aa^{\bullet})^*=\{(\aa^i)^*\}_{i\ge 0}$ to denote this filtration, and we call it as the \emph{tight closure filtration} of $\aa$.
   
   \item The ring $R$ is called \emph{$F$-pure}  if the Frobenius map is a pure homomorphism, that is for any $R$-module $M$, $M\otimes_R R\xrightarrow{1_M\otimes F} M\otimes_R R$ is injective.
   
   \item The ring $R$ is called \emph{weakly $F$-regular} if  all $R$-ideals are tightly closed.  An $F$-finite reduced ring $R$ is called \emph{strongly $F$-regular} if  for every element $c$ of $R$ which is not any minimal prime of $R$, there exists a $e>0$, such that such that the $R$-module map $R\to R^{1/q}$ sending $1\mapsto  c^{1/q}$ splits as a map of $R$-modules, where $q=p^e$. Every strongly $F$-regular ring is $F$-split.
   \item \cite[Definition 5.2]{AJL21} Assume further that $R$ is  $F$-finite, regular which is either local or $\mathbb{Z}_\geq 0$ graded. Then $I$ is called \emph{symbolic $F$-split} if  for every $n\in \mathbb{Z}_{\geq 0}$, there exists a splitting $\phi: R^{1/p}\to R$ such that 
 $\phi((I^{(np+1)})^{1/p})\subseteq I^{(n+1)}$.
  \end{enumerate}
    
  \end{definition}
In the following we summarize some basic results from literature.
 \begin{theorem}
  \begin{enumerate}
   \item A ring is regular if and only if the Frobenius map $F:R\to R$ is flat. In particular, in a regular ring every ideal is tightly closed.
   \item An $F$-finite regular ring is strongly $F$-regular, hence it is $F$-split.
   \item In an $F$-pure ring, every ideal is Frobenius closed.
   \item \cite{AJL21} Every symbolic $F$-split ideal is $F$-split. But the converse is false \cite[Example 5.13]{AJL21}.
  \end{enumerate}

 \end{theorem}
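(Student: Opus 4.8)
The four items are all classical, so the plan is to recall the standard arguments and point to the literature rather than prove anything new. For (1), the equivalence ``$R$ regular $\iff$ $F\colon R\to R$ flat'' is Kunz's theorem, which I would simply invoke (both conditions are local and $F$ commutes with localization). For the second sentence, to see that in a regular ring every ideal $I$ is tightly closed I would reduce to the case of a regular local domain $(R,\mathfrak m)$ by localizing at maximal ideals (a test element for $x\in I^{*}$ localizes to one, since a regular ring is reduced and hence $\Ass R=\Min R$), and then use that flatness of $F^{e}$ yields the colon formula $I^{[q]}:_{R}x^{q}=(I:_{R}x)^{[q]}$: if $cx^{q}\in I^{[q]}$ for all $q=p^{e}$ with $c$ a nonzerodivisor, then $c\in\bigcap_{q}(I:_{R}x)^{[q]}$, and were $I:_{R}x$ proper this would force $c\in\bigcap_{q}\mathfrak m^{[q]}\subseteq\bigcap_{n}\mathfrak m^{n}=0$ by Krull's intersection theorem, a contradiction; hence $x\in I$.

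For (2), the assertion that an $F$-finite regular ring is strongly $F$-regular is due to Hochster--Huneke, and I would recall the proof: localize to assume $(R,\mathfrak m)$ regular local and $F$-finite, so that by Kunz's theorem $R^{1/q}$ is $R$-free with a basis of ``monomials'' $x_{1}^{a_{1}/q}\cdots x_{d}^{a_{d}/q}e_{j}$ with $0\le a_{i}<q$, the $x_{i}$ a regular system of parameters and $\{e_{j}\}$ a $p^{e}$-basis of the residue field; given $c$ outside the unique (zero) minimal prime, $\bigcap_{q}\mathfrak m^{[q]}=0$ forces $c\notin\mathfrak m^{[q]}$ for $q\gg 0$, so $c^{1/q}$ has a unit coordinate in this basis, and the corresponding coordinate projection, rescaled by the inverse of that unit, is an $R$-linear retraction of $R\xrightarrow{1\mapsto c^{1/q}}R^{1/q}$. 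The final clause follows by taking $c=1$: a retraction of the inclusion $R\hookrightarrow R^{1/q}$ restricts to a retraction of $R\hookrightarrow R^{1/p}\subseteq R^{1/q}$, so $R$ is $F$-split.

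For (3), I would apply $F$-purity directly to $M=R/I$: the map $R/I\to R/I^{[p]}$, $x+I\mapsto x^{p}+I^{[p]}$, obtained by tensoring $M$ with Frobenius, is injective by purity; since composites of pure maps are pure, iterating gives $x^{q}\in I^{[q]}\Rightarrow x\in I$ for every $q=p^{e}$, i.e.\ $I^{F}=I$. For (4) I would cite \cite{AJL21} and give the one-line reason: specializing the definition of symbolic $F$-split to $n=0$ produces a splitting $\phi\colon R^{1/p}\to R$ of Frobenius with $\phi\big((I^{(1)})^{1/p}\big)\subseteq I^{(1)}$, i.e.\ $\phi(I^{1/p})\subseteq I$, and such a compatible splitting descends to a Frobenius splitting of $R/I$, so $I$ is $F$-split; the failure of the converse is exactly \cite[Example 5.13]{AJL21}. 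The only genuinely substantive step is the strong $F$-regularity argument in (2), and within it the only delicate point is the existence of a unit coordinate of $c^{1/q}$, which rests on $\bigcap_{q}\mathfrak m^{[q]}=0$ (immediate from $\mathfrak m^{[q]}\subseteq\mathfrak m^{q}$ and Krull's intersection theorem); everything else is bookkeeping or a direct citation.
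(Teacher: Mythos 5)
This theorem appears in the paper only as a summary of known results, stated with citations and no proof, so there is no argument of the authors to compare against; your sketches are precisely the standard arguments behind those citations — Kunz's theorem plus the colon formula $I^{[q]}:_R x^q=(I:_Rx)^{[q]}$ and Krull intersection for (1), the Hochster--Huneke unit-coordinate splitting of the free module $R^{1/q}$ using $\bigcap_q \mm^{[q]}=0$ for (2), purity applied to $M=R/I$ and iterated for (3), and the $n=0$ case of the symbolic $F$-splitting definition descending to a Frobenius splitting of $R/I$ for (4) — and they are correct. The only step you gloss over is the reduction to the local case in (2) (one needs the splitting condition, checked at each maximal ideal with its own $e$, to globalize, e.g.\ via openness of the splitting locus and quasi-compactness of $\Spec R$), which is standard and does not affect the argument.
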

 
At the end we briefly discuss about $F$-thresholds for ideals in a ring.
 \begin{definition}{\cite{HMTW}}
 \label{F-thresholds}
 Let $R$ be a ring and $\mathfrak{a}$ and $I$ be two $R$-ideals, such that $\mathfrak{a}\subseteq \sqrt I$.
 For all $q=p^e$, define, 
 $$\nu^I_{\mathfrak{a}}(q):=\max\{r: \mathfrak{a}^r\not\subseteq I^{[q]}\},$$
 and  the limit
 $$\mathcal{C}^I(\mathfrak{a}):=\lim_{e\to\infty}\frac{\nu^I_{\mathfrak{a}}(q)}{q} \text{ is called the \emph{$F$-threshold} of $\mathfrak{a}$ with respect to $I$}.$$
  \end{definition}
  \begin{remark}
  \begin{enumerate}
  \item \cite[Remark 2.1]{HMTW}
   Note that since $\aa\subseteq \sqrt I$,  for each $q$, $\nu^I_{\mathfrak{a}}(q)$ a non negative integer.  If $\mu(\aa)$  is the number of elements in a generating set of $\aa$  and $\aa^N\subseteq I$ for some $N>0$, one has
    $$\nu^I_{\mathfrak{a}}(q)\leq N(\mu(\aa)(q -1) + 1) -1.$$
    Hence the sequence $\{\frac{\nu^I_{\mathfrak{a}}(p^e)}{p^e}\}$ is a bounded sequence and bounded by $N\mu(\aa)$.
  \item The notion of the $F$-threshold of an ideal was first introduced for an $F$-finite regular ring by  Musta\c{t}\u{a}, Takagi and Watanabe \cite{MTW}. For regular rings, the sequence $\{\frac{\nu^I_{\mathfrak{a}}(q)}{q}\}$ becomes a monotone bounded sequence, hence the limit exists \cite[Lemma 1.1 and Remark 1.2]{MTW}.
  Later the definition was generalized for arbitrary rings by Huneke, Musta\c{t}\u{a}, Takagi and Watanabe in \cite{HMTW}. In that paper the authors prove the existence of the limit for $F$-pure rings. Later in \cite{DNP18}, De Stefani, N\'{u}\~{n}ez Betancourt and P\'{e}rez proved the existence of $C^J(\mathfrak{a})$ for any arbitrary ring $R$ \cite[Theorem 3.4]{DNP18}.

  \end{enumerate}
\end{remark}

  \subsection{Graphs, Hypergraphs and  associated ideals:} A \emph{hypergraph} $\mathcal{H}$ is a pair $(V (\H), E(\H))$ such that $V(
\H)$ is a non-empty finite set and $E(\H)$ is a non-empty collection of subsets of $V(\H)$. An element of $V(\H)$ is called a \emph{vertex} of $\H$ and an element of $E(\H)$ is called an \emph{edge} of $\H$. We call $\mathcal{H}$ a \emph{simple hypergraph} if there are no subset inclusions between the edges of $\mathcal{H}$. 
A subset ${W} \subseteq V(\H)$ is called a \emph{vertex cover} of $\H$, if  ${W}\cap e\neq \emptyset$ for every edge $e$. A subset  $M \subseteq E(\H)$ is called a \emph{matching} if  every pair of distinct edges in $M$ are disjoint.
The \emph{matching number}, denoted by $m(\H)$, is the maximum size of matchings in $\H$. A matching $M$  in a hypergraph $\H$ is called \emph{perfect} if every vertex of the hypergraph is in exactly one edge of $M$.
 A fractional matching is a function $f: E(\H)\to [0,1]$ such  that, for each vertex $v$, we have $\sum f (e) \leq 1$ where the sum is taken over all edges $e$ incident to $v$. \emph{The fractional matching number} of a graph $G$ is the $\sup_{f}\{\sum_{e\in E(\H)} f(e): \text{where } f \text{ is a fractional matching}\}$.
The \emph{chromatic number} of a hypergraph $\H$ is the minimum number of colors required to color vertices of $\H$ so that adjacent vertices have different colors.
A \emph{$b$-fold coloring} of a hypergraph $\H$ assigns
to each vertex of $\H$ a set of $b$ colors so that adjacent vertices have different colors. We say that $\H$ is a \emph{$b$-colorable} if it has a $b$-fold coloring.  The minimum $a$ for which $\H$ has a $b$-fold coloring is the $b$-fold chromatic number of $\H$, denoted $\chi_b(\H)$.  Define the fractional chromatic number to be
 $$\chi_f(\mathcal{H}):= \lim_{b\to \infty} \frac{\chi_b(\H)}{b}=\inf_b \frac{\chi_b(\H)}{b},$$
 where the equality occurs because $\chi_{a+b}(\H)\leq \chi_a(\H)+\chi_b(\H)$.
 
Let $\H$ be a hypergraph with vertex set $V(\H) = \{1, . . . , n\}$, and let $R=\mathbb{K}[x_1, \cdots, x_n]$, where $\mathbb{K}$ is a field. The \emph{edge ideal} of the hypergraph $\H$ is the square-free monomial ideal in $R$ defined as 
  $$I_{\mathcal{H}}: =\left(\prod_{i\in e}x_i: e\in E(\H)\right)$$
 and the \emph{cover ideal} of  the hypergraph $\H$ is the square-free monomial ideal in $R$ defined as  
  $$J({\mathcal{H}}): =\left(\prod_{i\in W}x_i: W \text{ is a vertex cover of } \H \right).$$ The edge ideal of a hypergraph is an Alexander dual of the cover ideal of that hypergraph and vice versa, i.e., $$I(\H)= \bigcap_{W \text{ is a vertex cover }} (x_i : i \in W) \text{ and }J(\H)=\bigcap_{e\in E(\H)} (x_i : i \in e). $$
 The square-free monomial ideals are in a one to one correspondence with simple hypergraphs via edge ideals.

  A hypergraph with each edge of cardinality two is known as \emph{graph}. Let $G=(V(G),E(G))$ be a graph.  A graph $H=(V(H),E(H))$ is  called a \emph{subgraph} $G$ if $V (H) \subseteq V (G)$ and $E(H)\subseteq E(G)$. A subgraph $H$ is said to be an \emph{induced subgraph of} $G$ if $E(H)=\{ \{v,w\} \in E(G) :  v, w \in V (H)\}$. 
 
 An \emph{$n$-cycle}, $C_n$, is a graph with $n$ vertices $\{v_1,\ldots,v_n\}$ and  with edges $\{\{v_i,v_{i+1}\}: 1\leq i\leq n-1\}\cup \{v_n,v_1\}$.  An $n$-cycle is called an \emph{odd cycle} (resp. \emph{even}) if $n$ is odd (resp. even).  A graph is said to be \emph{chordal} if there is no induced cycle on more than three vertices. 
A \emph{path}, $P_n$, of length $n-1$ is a graph with $n$ vertices $\{v_1,\ldots,v_n\}$ and  edges $\{\{v_i,v_{i+1}\}: 1\leq i\leq n-1\}$. A \emph{complete} graph, $K_n$, is a graph with $n$ vertices $\{v_1,\ldots,v_n\}$ and edges $E(G)=\{\{v_i,v_j\}: 1 \le i< j\le n \}$.  An induced complete subgraph of a graph $G$ is called a \emph{clique}. The \emph{clique
number} of a graph $G$, denoted by $\omega(G)$, is the maximum size of cliques of $G$. A graph $G$ on $n$ vertices is said to be \emph{traceable}, if $P_n$ is a subgraph of $G$, and is said to be  \emph{Hamiltonian} if $C_n$ is a subgraph of $G$. A graph is called \emph{vertex transitive} if given any two vertices $v$ and $w$ of G, there is an automorphism $f: G\to G$ such that $f(v)=w$.



 \section{$F$-thresholds of filtrations: their properties and existence}
 In this section, we extend the notion of $F$-thresholds of ideals to the notion of $F$-thresholds of  filtrations of ideals and study their basic properties.  Throughout this article, $R$ denote a Noetherian commutative ring of {prime} characteristic $p>0$,
 $I$ is an ideal of $R$ and $\mathfrak{a}_{\bullet}= \{\mathfrak{a}_i\}_{i\geq 0}$ is a filtration of ideals in $R$. For every non-negative integer $e$, we define $$\nu_{\mathfrak{a}_{\bullet}}^I (p^e) := \sup \{ r \in \mathbb{Z}_{\geq 0} \; : \;  \mathfrak{a}_r \not\subseteq I^{[p^e]} \}.$$

We define $$\mathcal{C}_+^I(\aa_{\bullet}):=\limsup_{e \to \infty} \frac{\nu_{\aa_{\bullet}}^I(p^e)}{p^e}{\in\mathbb{R}_{\geq 0}\cup\{\pm \infty\}} \text{ and } \mathcal{C}_-^I(\aa_{\bullet}):=\liminf_{e \to \infty} \frac{\nu_{\aa_{\bullet}}^I(p^e)}{p^e}{\in\mathbb{R}_{\geq 0}\cup\{\pm \infty\}}.$$ {If} $\mathcal{C}_+^I(\aa_{\bullet})=\mathcal{C}_-^I(\aa_{\bullet}){\in \mathbb{R}_{\geq 0}}$, then we denote it by $\mathcal{C}^I(\aa_{\bullet})$ and call it the \emph{$F$-threshold of $\aa_{\bullet}$ with respect to $I$}. {When $(R,\mm)$ is a local ring, for simplicity, we call $\C^{\mm}(\aa_\bullet)$ to be the \emph{$F$-threshold of $\aa_\bullet$}   and $\mathcal{C}^{\mathfrak{m}}(\aa^{(\bullet)})$ to be the \emph{symbolic $F$-threshold of $\aa$.}

 Throughout this article, we assume that $I$ is a non-zero proper ideal of $R$ as the trivial cases are discussed in the following remark:
 \begin{remark}\label{rmk}
 If $I =R$, then $\mathfrak{a}_r \subseteq I^{[p^e]}$  for all $ r, e \geq 0$. Thus,  $\mathcal{C}^I_{\pm}(\aa_\bullet)=-\infty.$ Assume that if $I=(0)$. If $\sqrt \mathfrak{a}_\bullet \neq (0)$, then $\mathfrak{a}_r \not\subseteq I^{[p^e]}$ for all $r,e \geq 0$. Hence, $\mathcal{C}^I_{\pm}(\aa_\bullet)=\infty.$ And if $ \mathfrak{a}_r = (0)$ for $r \gg 0$, then $\nu_{\mathfrak{a}_{\bullet}}^I (p^e) =k$ for some $k$. Thus, $\mathcal{C}^I_{\pm}(\aa_\bullet)=0.$ 
 
 Suppose that $I$ is proper. In this case $0 \le \nu_{\mathfrak{a}_{\bullet}}^I (p^e) \le \infty$, and therefore, $0 \le \mathcal{C}_{-}^I(p^e) \le \mathcal{C}_{+}^I(p^e) \le \infty$. 
 \end{remark}
 
{Next, we show that any non-negative real number can be realized as  F-threshold of a filtration in a ring $R$. Indeed, given any non-negative real number $\alpha$, we provide a homogeneous filtration $\aa_\bullet$ in a polynomial ring $R$ such that $\mathcal{C}^{\mathfrak{m}}(\aa_\bullet) =\alpha$, where $\mathfrak{m}$ denotes the unique homogeneous maximal ideal of $R$}.  

\begin{example}
\label{alpha}
Let $\alpha$ be a {non-negative} real number. Let $R=\mathbb{K}[x_1,\ldots,x_n]$, { where $\mathbb{K}$ is a field of characteristic $p>0$} and $\mathfrak{m}=(x_1,\ldots,x_n)$.  {When $\alpha=0$, then one can take $\aa_{\bullet}$ to be the zero filtration, i.e., $\aa_i=0$ for $i\ge 1$. Then, $\mathcal{C}^{\mathfrak{m}}(\aa_\bullet) =0$. Next, assume that $\alpha>0$.}   Take $\aa_r= \mathfrak{m}^{\left\lceil \frac{n r}{\alpha} \right\rceil} $ for every $r \in \mathbb{N}$. It is easy to see that $\aa_\bullet$ is a filtration, see also \cite[Example 2.14]{HKN22}. We claim that $ \mathcal{C}^{\mathfrak{m}}(\aa_\bullet) =\alpha$. If $r \ge \alpha p^e$, then $\frac{r n}{\alpha} \ge np^e$ which implies that $\aa_r \subseteq \mathfrak{m}^{np^e} \subseteq \mathfrak{m}^{[p^e]}$, where the last containment follows {from  the Pigeonhole Principle}, (cf. \cite[Lemma 2.4(1)]{HH02}). 
Therefore, $\nu_{\aa_\bullet}^{\mathfrak{m}}(p^e) < \alpha p^e$ for all $e$, and hence, $\mathcal{C}_+^{\mathfrak{m}}(\aa_\bullet) =\limsup_{e \to \infty} \frac{\nu_{\aa_\bullet}^{\mathfrak{m}}(p^e)}{p^e} \le \alpha.$  Next, take $r=\lceil \alpha (p^e-1) \rceil -1.$ Then, $\lceil\frac{rn}{\alpha}\rceil \le n(p^e-1),$ and therefore, $\aa_r \not\subseteq \mathfrak{m}^{[p^e]}$ as $(x_1 \cdots x_n)^{p^e-1} \in \aa_r$.  Thus, for all $e$, $\nu_{\aa_\bullet}^{\mathfrak{m}}(p^e) \ge \lceil \alpha (p^e-1) \rceil -1.$  Consequently, $$\frac{\nu_{\aa_\bullet}^{\mathfrak{m}}(p^e)}{p^e}  \ge \frac{\lceil \alpha (p^e-1) \rceil -1}{p^e} \ge \frac{\alpha (p^e-1)-1} {p^e}.$$ Thus, by definition of liminf, we get $\mathcal{C}_-^{\mathfrak{m}}(\aa_\bullet) \ge \alpha.$ Hence, $\mathcal{C}^{\mathfrak{m}}(\aa_\bullet)$ exists and it is equal to $\alpha.$
\end{example}

 Next, we study some basic properties of $F$-threshold of filtration. {Before that we recall the definition of cyclic pure extension. A ring extension  $R\to S$ is said to be \emph{cyclic pure}, if for every ideal $J$ in $R$,  $JS\cap R = J$.}
 
\begin{proposition}({cf. \cite[Proposition 2.2]{HMTW}})
\label{basic-prop} Let $I, J$ be proper {non-zero} ideals and $\aa_{\bullet}$ be filtration in $R$. Then, we have the followings: 
\begin{enumerate}
\item  Let $e_0\geq 0$ be a fixed integer. If $I^{[p^e]}=(I^{[p^e]})^F$ for all $e\ge e_0$, then  $p \nu_{\mathfrak{a}_{\bullet}}^I (p^e) \leq \nu_{\mathfrak{a}_{\bullet}}^I (p^{e+1})$ for all $e \ge e_0$. Moreover, $${\mathcal{C}^I_{\pm}(\aa_\bullet)}=\lim_{e \to \infty} \frac{\nu_{\mathfrak{a}_{\bullet}}^I (p^e)}{p^e} = \sup_{e \ge e_0} \frac{\nu_{\mathfrak{a}_{\bullet}}^I (p^e)}{p^e}.$$ 
{In particular, when $R$ is $F$-pure,}
$${\mathcal{C}^I_{\pm}(\aa_\bullet)}=\lim_{e \to \infty} \frac{\nu_{\mathfrak{a}_{\bullet}}^I (p^e)}{p^e} = \sup_{{e \ge 0}} \frac{\nu_{\mathfrak{a}_{\bullet}}^I (p^e)}{p^e}.$$

\item 
If $\sqrt{\aa_\bullet} \cap R^{o} \neq \emptyset$, $I^{[p^e]}=(I^{[p^e]})^*$ for $e\ge e_0$ and $0<{\lim_{e \to \infty} \frac{\nu_{\mathfrak{a}_{\bullet}}^I (p^e)}{p^e}} <\infty$, then $\frac{\nu_{\aa_{\bullet}}^I(p^e)}{p^e}< \mathcal{C}^I(\aa_\bullet)$ for $e \ge e_0$.
\item If $I \subseteq J$, then $\mathcal{C}^J_{\pm}(\aa_\bullet) \le \mathcal{C}^I_{\pm}(\aa_\bullet)$.
\item  For any $q=p^{e_0}$, $\mathcal{C}^{I^{[q]}}_{\pm}(\aa_\bullet) =q \;\mathcal{C}^I_{\pm}(\aa_\bullet)$.
\item  If $R\to S$ is a cyclic pure extension, {i.e., for every ideal $J$ in $R$, we have $JS\cap R = J$,} then $C^I(\mathfrak{a}_{\bullet})=C^{IS}(\mathfrak{a}_{\bullet}S),$ where $\aa_\bullet S =\{\aa_i S\}_{i\ge 0}$.
\end{enumerate}
\end{proposition}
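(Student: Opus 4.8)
The plan is to prove the five parts in order, since the later parts are largely formal manipulations once part (1) is in hand. For part (1), the key observation is that the Frobenius-closure hypothesis $I^{[p^e]} = (I^{[p^e]})^F$ lets us pass from a non-containment modulo $I^{[p^e]}$ to a non-containment modulo $I^{[p^{e+1}]}$ after raising to the $p$-th power: if $\aa_r \not\subseteq I^{[p^e]}$, pick $x \in \aa_r \setminus I^{[p^e]}$; then $x^p \notin (I^{[p^e]})^{[p]} = I^{[p^{e+1}]}$, for otherwise $x$ would lie in the Frobenius closure of $I^{[p^e]}$, hence in $I^{[p^e]}$ by hypothesis. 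Since $x^p \in \aa_r^{[p]} \subseteq \aa_r^p \subseteq \aa_{pr}$ (using the filtration axiom $\aa_i \aa_j \subseteq \aa_{i+j}$ iterated), we get $\aa_{pr} \not\subseteq I^{[p^{e+1}]}$, so $\nu^I_{\aa_\bullet}(p^{e+1}) \ge p r$; taking the sup over all valid $r \le \nu^I_{\aa_\bullet}(p^e)$ gives $p\,\nu^I_{\aa_\bullet}(p^e) \le \nu^I_{\aa_\bullet}(p^{e+1})$. (One must handle the case $\nu^I_{\aa_\bullet}(p^e) = \infty$ separately — then the inequality is vacuous, and in fact $\nu^I_{\aa_\bullet}(p^{e+1}) = \infty$ too by the same argument, so the limit is $+\infty$ consistently.) This inequality shows $\{\nu^I_{\aa_\bullet}(p^e)/p^e\}_{e \ge e_0}$ is monotone non-decreasing, so its limit exists and equals its supremum over $e \ge e_0$; since dropping finitely many terms does not change a supremum of a monotone tail, and since for $e < e_0$ the earlier terms are no larger, when $R$ is $F$-pure (so every ideal, in particular every $I^{[p^e]}$, is Frobenius closed and we may take $e_0 = 0$) the supremum runs over all $e \ge 0$. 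I expect part (1) to be the main obstacle, essentially because it is the only part containing a genuine characteristic-$p$ argument; the subtlety is just bookkeeping with $\nu = \infty$ and with the monotonicity of the tail.

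For part (2), the strategy mirrors \cite[Proposition 2.2]{HMTW}: suppose for contradiction that $\nu^I_{\aa_\bullet}(p^{e})/p^{e} = \mathcal{C}^I(\aa_\bullet)$ for some $e \ge e_0$; call this common value $c$, with $0 < c < \infty$. Pick $c^{\sharp} \in \sqrt{\aa_\bullet} \cap R^{o}$, so some power $(c^{\sharp})^{N} \in \aa_1$, hence $(c^{\sharp})^{Nk} \in \aa_k$ for all $k$. For each $e' > e$ write $\nu := \nu^I_{\aa_\bullet}(p^{e})$ and choose $x \in \aa_{\nu} \setminus I^{[p^{e}]}$; then $x^{p^{e'-e}} \cdot (\text{nothing}) \notin I^{[p^{e'}]}$ — more precisely one uses the tight-closure hypothesis $(I^{[p^{e'}]})^{*} = I^{[p^{e'}]}$ together with the test element $c^\sharp$ to argue that a suitable multiple $(c^\sharp)^{Nm} x^{p^{e'-e}}$ fails to lie in $I^{[p^{e'}]}$ for all $e'$, forcing $\nu^I_{\aa_\bullet}(p^{e'}) \ge p^{e'-e}\nu + (\text{linear in } p^{e'-e})$, and dividing by $p^{e'}$ and letting $e' \to \infty$ would push the limit strictly above $c$, a contradiction. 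The bookkeeping of which power of $c^\sharp$ to attach is the fiddly point, but it is exactly the argument of \cite{HMTW} adapted by replacing $\mathfrak{a}^r$ with $\aa_r$ and using $\aa_i \aa_j \subseteq \aa_{i+j}$.

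Parts (3), (4), (5) are short. For (3), if $I \subseteq J$ then $I^{[p^e]} \subseteq J^{[p^e]}$, so $\aa_r \not\subseteq J^{[p^e]}$ implies $\aa_r \not\subseteq I^{[p^e]}$, whence $\nu^J_{\aa_\bullet}(p^e) \le \nu^I_{\aa_\bullet}(p^e)$ for every $e$; dividing by $p^e$ and taking $\limsup$ and $\liminf$ gives $\mathcal{C}^J_{\pm}(\aa_\bullet) \le \mathcal{C}^I_{\pm}(\aa_\bullet)$. For (4), with $q = p^{e_0}$ one has $(I^{[q]})^{[p^e]} = I^{[p^{e_0+e}]}$, so $\nu^{I^{[q]}}_{\aa_\bullet}(p^e) = \nu^I_{\aa_\bullet}(p^{e_0+e})$; dividing by $p^e = p^{e_0+e}/q$ gives $\nu^{I^{[q]}}_{\aa_\bullet}(p^e)/p^e = q \cdot \nu^I_{\aa_\bullet}(p^{e_0+e})/p^{e_0+e}$, and taking $\limsup$/$\liminf$ as $e \to \infty$ (noting the shift by $e_0$ is irrelevant in the limit) yields $\mathcal{C}^{I^{[q]}}_{\pm}(\aa_\bullet) = q\,\mathcal{C}^I_{\pm}(\aa_\bullet)$. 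For (5), if $R \to S$ is cyclic pure then for any ideal $J \subseteq R$ we have $JS \cap R = J$; applying this with $J = I^{[p^e]}$ and noting $(IS)^{[p^e]} = I^{[p^e]}S$, we get $\aa_r \not\subseteq I^{[p^e]} \iff \aa_r \not\subseteq I^{[p^e]}S \cap R \iff \aa_r S \not\subseteq I^{[p^e]}S = (IS)^{[p^e]}$ (the last equivalence because $\aa_r S \cap R = \aa_r$ again by cyclic purity, so $\aa_r S \subseteq (IS)^{[p^e]}$ forces $\aa_r \subseteq (IS)^{[p^e]} \cap R = I^{[p^e]}$). Hence $\nu^I_{\aa_\bullet}(p^e) = \nu^{IS}_{\aa_\bullet S}(p^e)$ for all $e$, and the $F$-thresholds agree whenever either exists.
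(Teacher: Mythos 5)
Parts (1), (3), (4) and (5) of your proposal are correct and follow essentially the same route as the paper's own proof: the Frobenius-closure trick giving $p\,\nu_{\mathfrak{a}_\bullet}^I(p^e)\le \nu_{\mathfrak{a}_\bullet}^I(p^{e+1})$ and monotonicity in (1), the comparison $\nu_{\mathfrak{a}_\bullet}^J(p^e)\le\nu_{\mathfrak{a}_\bullet}^I(p^e)$ in (3), the index shift $\nu_{\mathfrak{a}_\bullet}^{I^{[q]}}(p^e)=\nu_{\mathfrak{a}_\bullet}^I(p^{e_0+e})$ in (4), and the two containment implications from cyclic purity in (5) all match the paper.

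Part (2), however, has a genuine gap, and the route you sketch is not the argument of \cite{HMTW} (nor of the paper). You want to contradict the equality $\nu_{\mathfrak{a}_\bullet}^I(p^e)/p^e=\mathcal{C}^I(\mathfrak{a}_\bullet)$ by producing, for $e'>e$, elements $(c^{\sharp})^{Nm}x^{p^{e'-e}}\notin I^{[p^{e'}]}$ and hence an estimate $\nu_{\mathfrak{a}_\bullet}^I(p^{e'})\ge p^{e'-e}\nu+(\text{linear in }p^{e'-e})$. For this to push the limit strictly above $\nu/p^e$ you need the extra term to be at least $\epsilon p^{e'-e}$ for some fixed $\epsilon>0$, i.e.\ you need $m$ to grow proportionally to $p^{e'-e}$. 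But then (say $N\epsilon\in\mathbb{Z}$) $(c^{\sharp})^{N\epsilon p^{e'-e}}x^{p^{e'-e}}=\bigl((c^{\sharp})^{N\epsilon}x\bigr)^{p^{e'-e}}$, and its non-membership in $I^{[p^{e'}]}=(I^{[p^e]})^{[p^{e'-e}]}$ is a Frobenius-closure statement about the single element $(c^{\sharp})^{N\epsilon}x$, which no hypothesis guarantees: that element may simply lie in $I^{[p^e]}$, since multiplying $x$ by a power of $c^{\sharp}\in\sqrt{\mathfrak{a}_\bullet}$ can land inside the ideal. If instead you keep $m$ fixed, the tight-closure hypothesis only gives the non-membership for infinitely many $e'$ (its contrapositive is about a fixed multiplier and all $f\gg0$), and the gain is the constant $m$, which vanishes after dividing by $p^{e'}$; either way no contradiction results. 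The actual argument runs in the opposite direction: if $\nu(p^e)/p^e$ equals the limit, then by the monotonicity of part (1) (tightly closed implies Frobenius closed) one gets $\nu(p^{e+f})=p^f\nu(p^e)$ for all $f$, hence the containments $\mathfrak{a}_{p^f\nu+1}\subseteq I^{[p^{e+f}]}$; choosing $c\in\sqrt{\mathfrak{a}_\bullet}\cap R^{o}$ with $c^{M}\in\mathfrak{a}_1$, so that $c^{M\nu}\in\mathfrak{a}_\nu$, one obtains $c^{M\nu}x^{p^f}\in\mathfrak{a}_\nu^{\,p^f+1}\subseteq\mathfrak{a}_{p^f\nu+1}\subseteq (I^{[p^e]})^{[p^f]}$ for all $f$ and all $x\in\mathfrak{a}_\nu$, whence $\mathfrak{a}_\nu\subseteq (I^{[p^e]})^{*}=I^{[p^e]}$, contradicting the definition of $\nu=\nu_{\mathfrak{a}_\bullet}^I(p^e)$. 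So the contradiction comes from a containment at level $e$ itself, not from inflating $\nu$ at higher levels; you should rework (2) along these lines.
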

\begin{proof}
\begin{enumerate}
\item Let $e \ge e_0$ be any. Suppose that $\aa_r \not\subseteq I^{[p^e]}$. Since $I^{[p^e]}=(I^{[p^e]})^F$ for $e\ge e_0$, we have  $\aa_r^{[p]} \not\subseteq I^{[p^{e+1}]}$ which implies that $\aa_r^p \not\subseteq I^{[p^{e+1}]}$. Now, as $\aa_{r}^p \subseteq \aa_{rp}$, we have $\aa_{rp} \not\subseteq I^{[p^{e+1}]}.$ Thus, $p  \nu_{\mathfrak{a}_{\bullet}}^I (p^e) \leq \nu_{\mathfrak{a}_{\bullet}}^I (p^{e+1})$ for $e \ge e_0$. This implies that the sequence $\left \{ \frac{\nu_{\mathfrak{a}_{\bullet}}^I (p^e)}{p^e}\right\}$ is an eventually  increasing sequence, and therefore, $\mathcal{C}^I_{\pm}(\aa_\bullet)=  \sup_{e \ge e_0} \frac{\nu_{\mathfrak{a}_{\bullet}}^I (p^e)}{p^e}.$ Hence, the second part follows.
\item We prove this by contradiction. Suppose that for some $e\ge e_0$,  $\frac{\nu_{\aa_{\bullet}}^I(p^e)}{p^e}={\lim_{e \to \infty} \frac{\nu_{\mathfrak{a}_{\bullet}}^I (p^e)}{p^e}}.$ Since $I^{[p^e]}=(I^{[p^e]})^*$ for $e\ge e_0$, by {$(2)$},  $$ \frac{\nu_{\aa_{\bullet}}^I(p^e)}{p^e} \leq \frac{\nu_{\mathfrak{a}_{\bullet}}^I (p^{e+f})}{p^{e+f}} \leq \lim_{e \to \infty} \frac{\nu_{\mathfrak{a}_{\bullet}}^I (p^e)}{p^e} = \frac{\nu_{\aa_{\bullet}}^I(p^e)}{p^e}$$ for all $f$. 
Thus,  $\nu_{\mathfrak{a}_{\bullet}}^I (p^{e+f}) =p^f \nu_{\mathfrak{a}_{\bullet}}^I (p^{e})$ which implies that $\aa_{p^f \nu_{\mathfrak{a}_{\bullet}}^I (p^{e}) +1} \subseteq I^{[p^{e+f}]}$ for all $f$.  Observe that $\aa_{\nu_{\mathfrak{a}_{\bullet}}^I (p^{e})}^{p^f +1} \subseteq \aa_{p^f \nu_{\mathfrak{a}_{\bullet}}^I (p^{e}) +1}  \subseteq I^{[p^{e+f}]}$ for all $f$. Since $I^{[p^e]}$ is tightly closed and $\sqrt \aa_\bullet \cap R^o\neq \emptyset$,  $\aa_{\nu_{\mathfrak{a}_{\bullet}}^I (p^{e})} \subseteq I^{[p^e]}$ which is a contradiction.
\item Suppose that $\aa_r \not\subseteq J^{[p^e]}$. Then, $\aa_r \not\subseteq I^{[p^{e}]}$ as $I^{[p^{e}]} \subseteq J^{[p^e]}$ which implies that $\nu_{\mathfrak{a}_{\bullet}}^J (p^e) \leq \nu_{\mathfrak{a}_{\bullet}}^I (p^{e}).$ Now, the rest follows from definition of limsup and liminf.

\item Note that $\aa_r \not\subseteq I^{[p^{e+e_0}]}$ if and only if $\aa_r \not\subseteq (I^{[p^{e_0}]})^{[p^{e}]}$ as $I^{[p^{e+e_0}]} =(I^{[e_{0}]})^{[p^{e}]}.$ This implies that $\nu_{\mathfrak{a}_{\bullet}}^I (p^{e+e_0}) = \nu_{\mathfrak{a}_{\bullet}}^{I^{[q]}} (p^{e}).$ Now, using this equality in the definition of $\mathcal{C}^I_{\pm}(\aa_\bullet)$, we get the assertion.
\item 
  Suppose that $\aa_r\subseteq I^{[p^e]}$. Then, $\aa_rS\subseteq I^{[p^e]}S=(IS)^{[p^e]}$ which implies that $\nu^I_{\aa_{\bullet}}(p^e)\geq \nu^{IS}_{\aa_{\bullet}S}(p^e)$ for all $e$. Since
 $R\rightarrow S$ is cyclic pure extension, if $\aa_rS\subseteq (IS)^{[p^e]}=I^{[p^e]}S$, then $\aa_r\subseteq I^{[p^e]}$. Therefore, $\nu^I_{\aa_{\bullet}}(p^e)\leq \nu^{IS}_{\aa_{\bullet}S}(p^e)$ for all $e$, and the proof follows.
\end{enumerate} 
\end{proof}

\begin{remark}
{Let $R$ be a Noetherian domain of prime characteristic $p>0$ and $I$ be a non-zero proper ideal of $R$ such that $I^{[p^e]}=(I^{[p^e]})^F$ for $e\gg 0$. We claim that $\mathcal{C}^I(\aa_\bullet)=0$ if and only if $\aa_i =0$ for all $i \ge 1$. If $\mathcal{C}^I(\aa_\bullet)=0,$ then by \Cref{basic-prop} $(1)$, $\nu_{\aa_\bullet}^I(p^e)=0$ for $e\gg 0$, and therefore, $\aa_1 \subseteq \cap_{e \gg 0} I^{[p^e]} =(0)$. Thus, $\aa_i =0$ for all $i \ge 1$. The converse hold trivially.}
 \end{remark}

We now compare $F$-threshold of filtration of ideals when one filtration contains another filtration. 
Let  $\aa_\bullet = \{\aa_i\}_{i \ge 1}$ and $\bb_\bullet = \{\bb_i\}_{i \ge 1}$ be two filtrations in $R$. We write $\aa_\bullet \le \bb_\bullet$ if $\aa_i \subseteq \bb_i$ for all $i \ge 1$. 

\begin{theorem}
\label{fin-fil}  
Let  $\aa_\bullet = \{\aa_i\}_{i \ge 1}$ and $\bb_\bullet = \{\bb_i\}_{i \ge 1}$ be two filtrations in $R$. If $\aa_\bullet \le \bb_\bullet$, then $\mathcal{C}_{\pm}^I(\aa_\bullet) \le \mathcal{C}_{\pm}^I(\bb_\bullet)$. Moreover, if $\mathcal{R}(\bb_\bullet)$ is a finitely generated $\mathcal{R}(\aa_\bullet)$-module, then $\mathcal{C}_{\pm}^I(\aa_\bullet) = \mathcal{C}_{\pm}^I(\bb_\bullet)$.
\end{theorem}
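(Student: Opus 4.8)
The first assertion is immediate from the definitions: if $\aa_\bullet \le \bb_\bullet$, then for every $e$ and every $r$, the containment $\bb_r \not\subseteq I^{[p^e]}$ forces $\aa_r \not\subseteq I^{[p^e]}$ is \emph{not} what we want — rather, $\aa_r \subseteq \bb_r$ means that $\aa_r \not\subseteq I^{[p^e]}$ implies $\bb_r \not\subseteq I^{[p^e]}$, hence $\nu_{\aa_\bullet}^I(p^e) \le \nu_{\bb_\bullet}^I(p^e)$ for all $e$. Taking $\limsup$ and $\liminf$ gives $\mathcal{C}_{\pm}^I(\aa_\bullet) \le \mathcal{C}_{\pm}^I(\bb_\bullet)$. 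So the content of the theorem is the reverse inequality under the module-finiteness hypothesis, i.e. $\mathcal{C}_{\pm}^I(\bb_\bullet) \le \mathcal{C}_{\pm}^I(\aa_\bullet)$.

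The plan for the reverse inequality is to exploit finite generation of $\mathcal{R}(\bb_\bullet)$ as an $\mathcal{R}(\aa_\bullet)$-module to find a single shift $c$ and a finite set of ``generating degrees'' so that $\bb_n$ is controlled by the $\aa_i$'s with $i$ close to $n$. Concretely: if $\mathcal{R}(\bb_\bullet)$ is generated as an $\mathcal{R}(\aa_\bullet)$-module by finitely many homogeneous elements of degrees $d_1 t^{d_1}, \ldots, d_s t^{d_s}$ (where a homogeneous element of degree $d$ lives in $\bb_d t^d$), then for every $n$ one has $\bb_n = \sum_{j : d_j \le n} \aa_{n - d_j}\, \bb_{d_j}$. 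Let $c := \max_j d_j$. Since $\bb_{d_j} \subseteq \bb_1 \subseteq \sqrt{\aa_\bullet}$ and $\aa_\bullet$ is a filtration, there is a uniform power: there exists $N$ with $(\sqrt{\aa_\bullet})^N \subseteq \aa_1$, hence $\bb_{d_j}^{N c} \subseteq \aa_1^{?}$— more cleanly, since each $\bb_{d_j}$ is finitely generated and contained in $\sqrt{\aa_1}$, there is a single integer $k$ with $\bb_{d_j} \subseteq \aa_k$ is generally false, but $\bb_{d_j}^{\,k} \subseteq \aa_{1}$ holds for $k \gg 0$; better still, choose $k$ so that $\bb_{d_j}^{\,k} \subseteq \aa_1^{\,?}$. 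The key inequality we want to extract is: there is a constant $C$ (depending only on $\aa_\bullet, \bb_\bullet$) such that $\bb_{n+C} \subseteq \aa_n$ for all $n \ge 0$. From such a statement it follows directly that $\nu_{\bb_\bullet}^I(p^e) \le \nu_{\aa_\bullet}^I(p^e) + C$: if $r = \nu_{\bb_\bullet}^I(p^e)$ then $\bb_r \not\subseteq I^{[p^e]}$, but $\bb_{(r - C) + C} = \bb_r \subseteq \aa_{r-C}$ would give $\aa_{r - C} \not\subseteq I^{[p^e]}$, so $\nu_{\aa_\bullet}^I(p^e) \ge r - C$. Dividing by $p^e$ and taking $\limsup$, $\liminf$ kills the $+C$ and yields $\mathcal{C}_{\pm}^I(\bb_\bullet) \le \mathcal{C}_{\pm}^I(\aa_\bullet)$, completing the proof.

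The main obstacle is establishing the uniform containment $\bb_{n+C} \subseteq \aa_n$. Module-finiteness of $\mathcal{R}(\bb_\bullet)$ over $\mathcal{R}(\aa_\bullet)$ gives $\bb_n = \sum_{j} \aa_{n-d_j}\bb_{d_j}$ (for $n \ge c$), which already says $\bb_n \subseteq \aa_{n - c}$ for $n \ge c$ \emph{provided} each $\bb_{d_j} \subseteq \aa_{d_j - c + \text{something}}$ — which need not hold. The fix is to iterate or to absorb the $\bb_{d_j}$ factors: since $\bb_{d_j} \subseteq \bb_1 \subseteq \sqrt{\aa_1}$ and $\bb_1$ is finitely generated, pick $N$ with $\bb_1^{\,N} \subseteq \aa_1$, so $\bb_1^{\,N} \subseteq \aa_1$ and hence $\bb_{Nm} \subseteq \bb_1^{\,Nm}\cdot(\ldots)$— more carefully, $\bb_{d_j} \cdot \bb_1^{N-1} \subseteq \bb_{d_j + N - 1}$, which does not immediately help. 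The clean route: from $\bb_n = \sum_j \aa_{n-d_j}\bb_{d_j}$, substitute $\bb_{d_j} \subseteq \bb_1$ and pick $N$ with $\bb_1^N \subseteq \aa_1 \subseteq \aa_\bullet$; then by a telescoping argument one shows $\bb_n \subseteq \aa_{n - M}$ for a fixed $M$ depending on $c$ and $N$. I would carry this out by first reducing to the case $s = 1$ is not possible, so instead directly: set $C = c(N+1)$ or similar and verify $\bb_{n+C}\subseteq \aa_n$ by writing $\bb_{n+C} = \sum_j \aa_{n + C - d_j}\bb_{d_j}$, then noting $\aa_{n+C-d_j}\bb_{d_j} \subseteq \aa_{n+C-d_j}\bb_1^{?}$ and pushing the $\bb_1$-powers into $\aa$'s via $\bb_1^N\subseteq\aa_1$. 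The bookkeeping of exponents is the only real work; the structural idea — finite generation bounds the ``gap'' between the two filtrations by a constant, and constants disappear in the normalized limit — is exactly the same mechanism already used implicitly for Noetherian filtrations, so I would point the reader to that analogy and keep the exponent chase brief.
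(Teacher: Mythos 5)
Your route is structurally the same as the paper's: the first inequality comes from $\nu_{\aa_\bullet}^I(p^e)\le\nu_{\bb_\bullet}^I(p^e)$, and the reverse inequality is reduced to a uniform containment $\bb_{n+k}\subseteq\aa_n$ for all $n$ (which the paper simply imports from the proof of \cite[Theorem 3.2]{HKN22}), after which $\nu_{\bb_\bullet}^I(p^e)\le\nu_{\aa_\bullet}^I(p^e)+k$ and the constant dies upon dividing by $p^e$ and taking $\limsup$/$\liminf$ --- exactly your concluding step. The only place you go astray is the step you label ``the only real work,'' which in fact requires no work: choose the finitely many module generators of $\mathcal{R}(\bb_\bullet)$ over $\mathcal{R}(\aa_\bullet)$ to be homogeneous, say $u_jt^{d_j}$ with $u_j\in\bb_{d_j}$ (possible since the module is graded), so that $\bb_n=\sum_{j\,:\,d_j\le n}\aa_{n-d_j}\,u_j$ for all $n$. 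Setting $c=\max_j d_j$, for every $n\ge 0$ one gets
$$\bb_{n+c}=\sum_{j}\aa_{n+c-d_j}\,u_j\subseteq\sum_{j}\aa_{n+c-d_j}\subseteq\aa_n,$$
because $u_j\in R$ and $\aa_\bullet$ is decreasing (here $n+c-d_j\ge n$). Your worry that one additionally needs $\bb_{d_j}\subseteq\aa_{d_j-c+\cdots}$ is unfounded --- the trivial containment $\bb_{d_j}\subseteq R$ suffices --- so the entire proposed ``fix'' via $\bb_1^N\subseteq\aa_1$, the telescoping, and the exponent bookkeeping (which you leave unexecuted) can be discarded. With $\bb_{n+c}\subseteq\aa_n$ established, your deduction of $\nu_{\bb_\bullet}^I(p^e)\le\nu_{\aa_\bullet}^I(p^e)+c$ (treating separately the trivial case $r\le c$, and noting the argument also forces $\nu_{\aa_\bullet}^I(p^e)=\infty$ whenever $\nu_{\bb_\bullet}^I(p^e)=\infty$) and the passage to the limit coincide with the paper's proof.
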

\begin{proof} 
  If  $\mathfrak{a}_r \not\subseteq I^{[p^e]} $, then $\mathfrak{b}_r \not\subseteq I^{[p^e]}$ as $\mathfrak{a}_r \subseteq \mathfrak{b}_r$. Therefore, $\nu_{{\mathfrak{a}_{\bullet}}}^I (p^e) \leq \nu_{\mathfrak{b}_{\bullet}}^I (p^e) $ for all $e$. Thus, by the definition of limsup and liminf,  $\mathcal{C}_{\pm}^I(\aa_\bullet) \le \mathcal{C}_{\pm}^I(\bb_\bullet)$.

Since $\mathcal{R}({\bb_\bullet})$ is finitely generated as an $\mathcal{R}(\aa_\bullet)$-module,  it follows from the proof of \cite[Theorem 3.2]{HKN22} that there exists a positive integer $k$ such that   ${\bb_{n+k}} \subseteq \aa_n$ for all $n$. We claim that $ \nu_{\mathfrak{b}_{\bullet}}^I (p^e) \leq \nu_{\mathfrak{a}_{\bullet}}^I (p^e)+k $ for all $e$. Let $r$  be a non-negative integer such that  $\mathfrak{b}_r \not\subseteq I^{[p^e]} $. If $r >k$, then $\mathfrak{a}_{r-k} \not\subseteq  I^{[p^e]}$ as $\mathfrak{b}_r \subseteq \mathfrak{a}_{r-k}$. Thus, $r-k \leq \nu_{{\mathfrak{a}_{\bullet}}}^I (p^e)$ which further implies that  $ \nu_{\mathfrak{b}_{\bullet}}^I (p^e) \leq \nu_{\mathfrak{a}_{\bullet}}^I (p^e)+k $. Therefore, for all $e$, $ \frac{\nu_{{\mathfrak{b}_{\bullet}}}^I (p^e)}{p^e} \leq \frac{\nu_{{\mathfrak{a}_{\bullet}}}^I (p^e)+k}{p^e}$,  and hence, $\liminf_{e \to \infty}\frac{\nu_{{\mathfrak{b}_{\bullet}}}^I (p^e)}{p^e}\le \liminf_{e \to \infty}\frac{\nu_{{\mathfrak{a}_{\bullet}}}^I (p^e)+k}{p^e}$ and $\limsup_{e \to \infty}\frac{\nu_{{\mathfrak{b}_{\bullet}}}^I (p^e)}{p^e}\le \limsup_{e \to \infty}\frac{\nu_{{\mathfrak{a}_{\bullet}}}^I (p^e)+k}{p^e}.$ Note that  $$\liminf_{e \to \infty}\frac{\nu_{{\mathfrak{a}_{\bullet}}}^I (p^e)+k}{p^e}= \liminf_{e \to \infty}\frac{\nu_{{\mathfrak{a}_{\bullet}}}^I (p^e)}{p^e} \text{ and } \limsup_{e \to \infty}\frac{\nu_{{\mathfrak{a}_{\bullet}}}^I (p^e)+k}{p^e}=\limsup_{e \to \infty}\frac{\nu_{{\mathfrak{a}_{\bullet}}}^I (p^e)}{p^e}.$$ Thus, $\mathcal{C}_{\pm}^I(\aa_\bullet) \ge \mathcal{C}_{\pm}^I(\bb_\bullet)$. Hence, the assertion follows. 
\end{proof}
Let $ \aa$ be an ideal of $R$. A filtration $\aa_\bullet$ in $R$ is said to be \textit{$\aa$-admissible} if there exists a positive integer $k$ such that for all $i$, $\aa^i \subseteq \aa_i \subseteq \aa^{i-k}.$ As an immediate consequence, we obtain the following: 

\begin{proposition}
{Let $\aa, I$ be non-zero proper ideals of $R$ such that $\aa \subseteq \sqrt I$.} Then, for any $\aa$-admissible filtration $\aa_\bullet$, $\mathcal{C}^I(\aa_\bullet) = \mathcal{C}^I(\aa^\bullet)$.
\end{proposition}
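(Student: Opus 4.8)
The plan is to deduce the statement directly from \Cref{fin-fil} applied to the pair of filtrations $\aa^\bullet \le \aa_\bullet$, after first recording that the filtration-theoretic $F$-threshold of an adic filtration recovers the classical one. First I would observe that for every $e$ one has
$\nu^I_{\aa^\bullet}(p^e) = \sup\{ r \in \mathbb{Z}_{\ge 0} : \aa^r \not\subseteq I^{[p^e]}\} = \nu^I_{\aa}(p^e)$,
so that $\mathcal{C}_\pm^I(\aa^\bullet) = \mathcal{C}_\pm^I(\aa)$; since $\aa \subseteq \sqrt I$, the classical $F$-threshold $\mathcal{C}^I(\aa)$ exists by \cite[Theorem 3.4]{DNP18} and is a non-negative real number (see \Cref{F-thresholds} and the surrounding remark). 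In particular $\mathcal{C}_+^I(\aa^\bullet) = \mathcal{C}_-^I(\aa^\bullet) = \mathcal{C}^I(\aa) \in \mathbb{R}_{\ge 0}$.

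Next I would verify the module-finiteness hypothesis of \Cref{fin-fil} for the containment $\aa^\bullet \le \aa_\bullet$ (which holds because $\aa^i \subseteq \aa_i$ for all $i$). By $\aa$-admissibility there is an integer $k \ge 1$ with $\aa_i \subseteq \aa^{i-k}$ for all $i$, where as usual $\aa^{j} := R$ for $j \le 0$; equivalently $\aa_{n+k} \subseteq \aa^{n}$ for all $n \ge 0$. As in the proof of \cite[Theorem 3.2]{HKN22}, quoted inside the proof of \Cref{fin-fil}, this shift condition is exactly what guarantees that $\mathcal{R}(\aa_\bullet)$ is a finitely generated $\mathcal{R}(\aa^\bullet)$-module. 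Hence \Cref{fin-fil} applies and gives $\mathcal{C}_\pm^I(\aa^\bullet) = \mathcal{C}_\pm^I(\aa_\bullet)$.

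Combining the two steps yields $\mathcal{C}_+^I(\aa_\bullet) = \mathcal{C}_-^I(\aa_\bullet) = \mathcal{C}^I(\aa)$, so $\mathcal{C}^I(\aa_\bullet)$ exists and equals $\mathcal{C}^I(\aa) = \mathcal{C}^I(\aa^\bullet)$, as claimed. Alternatively one can bypass \Cref{fin-fil} with a one-line sandwich: the inclusions $\aa^r \subseteq \aa_r \subseteq \aa^{r-k}$ give $\nu^I_{\aa}(p^e) \le \nu^I_{\aa_\bullet}(p^e) \le \nu^I_{\aa}(p^e) + k$ for every $e$, and dividing by $p^e$ and letting $e \to \infty$ gives the claim because $\lim_{e\to\infty} \nu^I_{\aa}(p^e)/p^e$ exists. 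There is no substantive obstacle here; the only point requiring a word of care is the bookkeeping of the shift (the term $\aa^{r-k}$ is the unit ideal when $r \le k$, which is harmless since $\nu^I_{\aa_\bullet}(p^e) \ge 0$ always), and the appeal to \cite[Theorem 3.4]{DNP18} to know the target limit exists.
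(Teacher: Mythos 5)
Your fallback ``sandwich'' argument is exactly the paper's proof: the paper gets $\mathcal{C}_{\pm}^I(\aa^\bullet) \le \mathcal{C}_{\pm}^I(\aa_\bullet)$ from the containment part of \Cref{fin-fil} (i.e.\ monotonicity of $\nu$), gets $\nu_{\aa_\bullet}^I(p^e) \le \nu_{\aa^\bullet}^I(p^e)+k$ from the shift $\aa_i \subseteq \aa^{i-k}$, and then invokes \cite[Theorem 3.4]{DNP18} for the existence of $\mathcal{C}^I(\aa^\bullet)$; so on that route you are correct and in complete agreement with the paper, including the harmless bookkeeping when $r \le k$. Your primary route, through the ``Moreover'' part of \Cref{fin-fil}, is a genuinely different reduction, but as written it has a justification gap: \cite[Theorem 3.2]{HKN22} (as used in the proof of \Cref{fin-fil}) gives the implication \emph{finite generation of $\mathcal{R}(\bb_\bullet)$ over $\mathcal{R}(\aa_\bullet)$ $\Rightarrow$ shift containment}, not the converse, so the shift condition is not ``exactly what guarantees'' module-finiteness. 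In the present situation the converse does hold, but it needs its own (easy) argument: $\aa_n \subseteq \aa^{n-k}$ puts $\mathcal{R}(\aa_\bullet)$ inside $\bigoplus_{n\ge 0} \aa^{n-k}t^n$, which is generated over $\mathcal{R}(\aa^\bullet)$ by $1,t,\dots,t^k$, and since $\mathcal{R}(\aa^\bullet)$ is Noetherian the submodule $\mathcal{R}(\aa_\bullet)$ is finitely generated. With that patch the primary route is also valid; what it buys is nothing beyond the sandwich here, since the sandwich already gives both inequalities with no algebra-finiteness input.
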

\begin{proof}
{Since $\aa_\bullet$ is an $\aa$-admissible filtration, $\aa^{\bullet} \le \aa_\bullet$, and therefore, by \Cref{fin-fil}, $\mathcal{C}_{\pm}^I(\aa^\bullet) \le \mathcal{C}_{\pm}^I(\aa_\bullet)$. Also, there exists a positive integer $k$ such that $\aa_{i} \subseteq \aa^{i-k}$ for all $i\ge k$. Thus, $\nu_{\aa_\bullet}^I(p^e) \le \nu_{\aa^\bullet}^I(p^e)+k$ for all $e.$ Now, applying limsup and liminf to the sequences, we get that $\mathcal{C}_{\pm}^I(\aa_\bullet) \le \mathcal{C}_{\pm}^I(\aa^\bullet)$. Hence, by \cite[Theorem 3.4]{DNP18}, the assertion follows.}
\end{proof}

As another immediate consequence, we obtain the following:
\begin{corollary}
Let $\aa, I$ be non-zero proper ideals of $R$. Then, $\mathcal{C}_{\pm}^I(\overline{\aa^\bullet})=\mathcal{C}_{\pm}^I(( \aa^\bullet)^*).$
\end{corollary}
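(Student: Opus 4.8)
The plan is to sandwich the normal filtration $\overline{\aa^\bullet}$ and the tight closure filtration $(\aa^\bullet)^*$ between the ordinary power filtration $\aa^\bullet$ and show that both are $\aa$-admissible, so that the preceding proposition applies to each. First I would recall the standard containments $\aa^i \subseteq (\aa^i)^* \subseteq \overline{\aa^i}$ for every $i$ (the first inclusion is trivial and the second is part of the definitions recalled in the preliminaries), which immediately gives the lower bound $\aa^i \subseteq (\aa^i)^*$ and $\aa^i\subseteq \overline{\aa^i}$ required by $\aa$-admissibility.

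The core step is the upper bound: I need a positive integer $k$ with $\overline{\aa^i} \subseteq \aa^{i-k}$ for all $i \ge k$ (and likewise for the tight closure filtration, which then follows from $(\aa^i)^* \subseteq \overline{\aa^i}$). This is a uniform Artin–Rees / Briançon–Skoda type statement: by the classical fact that the normal filtration $\overline{\aa^\bullet}$ is a Noetherian filtration over a Noetherian ring (its Rees algebra is module-finite over the Rees algebra of $\aa^\bullet$ when $R$ is, say, analytically unramified, or more elementarily one uses that $\overline{\aa^\bullet}$ is integral over $\aa^\bullet$ so there is an $n_0$ with $\overline{\aa^{i+1}} = \aa\,\overline{\aa^i}$ for $i \ge n_0$), there exists $k$ such that $\overline{\aa^{i}} \subseteq \aa^{i-k}$ for all large $i$. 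Concretely, integrality of $\R(\overline{\aa^\bullet})$ over $\R(\aa^\bullet)$ yields a single $k$ with $\overline{\aa^i}\subseteq\aa^{i-k}$, exactly the condition in the definition of $\aa$-admissible filtration. Hence both $\overline{\aa^\bullet}$ and $(\aa^\bullet)^*$ are $\aa$-admissible filtrations.

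Then I would invoke the proposition just proved: for any $\aa$-admissible filtration $\aa_\bullet$ one has $\mathcal{C}_{\pm}^I(\aa_\bullet) = \mathcal{C}_{\pm}^I(\aa^\bullet)$ (the proof of that proposition already works at the level of $\mathcal{C}_{\pm}$, not just $\mathcal{C}$). Applying it with $\aa_\bullet = \overline{\aa^\bullet}$ and with $\aa_\bullet = (\aa^\bullet)^*$ gives $\mathcal{C}_{\pm}^I(\overline{\aa^\bullet}) = \mathcal{C}_{\pm}^I(\aa^\bullet) = \mathcal{C}_{\pm}^I((\aa^\bullet)^*)$, which is the assertion. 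One should note that the proposition was stated under the hypothesis $\aa\subseteq\sqrt I$, but in fact only the $\aa$-admissibility and \Cref{fin-fil} are used for the $\mathcal{C}_\pm$ equality, so the corollary holds without that extra hypothesis; alternatively one simply carries the hypothesis along, since it costs nothing.

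The main obstacle is the uniform containment $\overline{\aa^i} \subseteq \aa^{i-k}$: its validity in full generality over an arbitrary Noetherian ring $R$ requires some care (it is automatic if $R$ is analytically unramified, or if one restricts to the setting where the integral closure Rees algebra is module-finite, as in the case of excellent rings or polynomial rings over a field). If $R$ is not assumed to have such finiteness, the normal filtration need not be $\aa$-admissible and the corollary may genuinely require an extra hypothesis; I would either add "assume $R$ is analytically unramified" to the statement, or else observe that $(\aa^i)^* = \overline{\aa^i}$ fails in general anyway when $R$ is not a domain, so some normality/excellence hypothesis is implicit. I expect the intended reading is that $R$ is nice enough (e.g.\ a quotient of a regular ring, or the polynomial ring setting of the applications) for the module-finiteness to hold, in which case the argument above is complete.
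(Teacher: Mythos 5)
There is a genuine gap, and it lies exactly where you flagged your ``main obstacle.'' Your route compares both filtrations to the ordinary power filtration $\aa^\bullet$ and needs the uniform containment $\overline{\aa^{i}}\subseteq \aa^{i-k}$, i.e.\ $\aa$-admissibility of the normal filtration. This is simply false over an arbitrary Noetherian ring of characteristic $p$, and your fallback justifications do not repair it: integrality of $\R(\overline{\aa^\bullet})$ over $\R(\aa^\bullet)$ does \emph{not} yield module-finiteness or a single shift $k$, and the stabilization $\overline{\aa^{i+1}}=\aa\,\overline{\aa^{i}}$ for $i\gg 0$ also fails in general. The paper itself gives the counterexample immediately after this corollary: in $R=S[[x]]/(x^n)$ with $\bb=\aa R$ one has $x\in\overline{\bb^{i}}$ for all $i$, no uniform containment exists, and in fact $\mathcal{C}^{I}_{\pm}(\overline{\bb^\bullet})=\infty$ while $\mathcal{C}^{I}(\bb^\bullet)<\infty$. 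So the intermediate equality $\mathcal{C}_{\pm}^I(\overline{\aa^\bullet})=\mathcal{C}_{\pm}^I(\aa^\bullet)$ that your argument routes through is not a theorem; adding ``analytically unramified'' (as you suggest) would prove a weaker statement --- essentially \Cref{int-f} --- not the corollary as stated, which holds for arbitrary non-zero proper ideals in any Noetherian ring of prime characteristic (with the equality read in the extended reals; in the example above both sides are $\infty$, since nilpotents lie in the tight closure of every power).

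The missing idea is to compare $(\aa^\bullet)^*$ and $\overline{\aa^\bullet}$ \emph{directly}, bypassing $\aa^\bullet$ entirely. One always has $(\aa^\bullet)^*\le\overline{\aa^\bullet}$, giving one inequality by \Cref{fin-fil}. For the other, the characteristic-$p$ Brian\c{c}on--Skoda theorem (\cite[Theorem 13.2.1]{HS06}) gives, for an ideal $\aa$ generated by $\ell$ elements, $\overline{\aa^{\,n+\ell}}\subseteq(\aa^{n+1})^*$ for all $n\ge 0$ --- a uniform shift between the two filtrations that holds in \emph{every} Noetherian ring of characteristic $p$, with no reducedness, excellence, or analytic unramifiedness. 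Feeding this containment into the second half of the proof of \Cref{fin-fil} (whose argument only uses the existence of such a $k$, here $k=\ell-1$) yields $\mathcal{C}_{\pm}^I(\overline{\aa^\bullet})\le\mathcal{C}_{\pm}^I((\aa^\bullet)^*)$, and hence equality. This is the paper's argument, and it is both more general and shorter than the sandwich through $\aa^\bullet$; your closing remark that some normality or excellence hypothesis is ``implicit'' in the statement is therefore not needed.
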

\begin{proof}
It follows from the well known fact that $( \aa^\bullet)^* \le \overline{\aa^\bullet}$. Next, it follows from \cite[Theorem 13.2.1]{HS06} that  $\R(\overline{\aa^\bullet})$ is finitely generated $\R((\aa^\bullet)^*)$-module. Hence, the assertion follows from Theorem \ref{fin-fil}. 
\end{proof}

Let $\mathfrak{a}_{\bullet}$ be a filtration of ideals in $R$. 
Take $\overline{\mathfrak{a}_{\bullet}} = \{ \overline{\mathfrak{a}_i} \}_{i\geq 0}$ and $({\mathfrak{a}_{\bullet}})^* = \{ ({\mathfrak{a}_i} )^*\}_{i\geq 0}$. Observe that $\overline{\aa_\bullet}$ and $({\mathfrak{a}_{\bullet}})^*$ are filtrations of ideals in $R$ and $\aa_\bullet \le ({\mathfrak{a}_{\bullet}})^* \le  \overline{\aa_\bullet}.$

\begin{corollary}\label{int-fil}
Let $\mathfrak{a}_{\bullet}$ be a filtration in $R$. Suppose that ${\mathcal{R}(\overline{\mathfrak{a}_{\bullet}})}$ is a finitely generated $\mathcal{R}(\mathfrak{a}_{\bullet})$-module. Then, $$\mathcal{C}_{\pm}^I(\aa_\bullet) = \mathcal{C}_{\pm}^I(({\aa_\bullet})^*)=\mathcal{C}_{\pm}^I(\overline{\aa_\bullet}).$$
\end{corollary}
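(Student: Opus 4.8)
\textbf{Proof plan for Corollary~\ref{int-fil}.}
The strategy is to reduce the statement to \Cref{fin-fil} by exhibiting the right module-finiteness hypotheses among the three Rees algebras $\R(\aa_\bullet)$, $\R((\aa_\bullet)^*)$, and $\R(\overline{\aa_\bullet})$. We are given that $\R(\overline{\aa_\bullet})$ is a finitely generated $\R(\aa_\bullet)$-module, and we already observed the chain of containments $\aa_\bullet \le (\aa_\bullet)^* \le \overline{\aa_\bullet}$, which on the level of Rees algebras says $\R(\aa_\bullet) \subseteq \R((\aa_\bullet)^*) \subseteq \R(\overline{\aa_\bullet})$ as graded subrings of $R[t]$.

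First I would record the elementary fact that if $A \subseteq B \subseteq C$ is a tower of rings and $C$ is module-finite over $A$, then $C$ is module-finite over $B$ and (when $A$ is Noetherian, so that submodules of finitely generated modules are finitely generated) $B$ is module-finite over $A$ as well. Applying this with $A = \R(\aa_\bullet)$, $B = \R((\aa_\bullet)^*)$, $C = \R(\overline{\aa_\bullet})$: since $\R(\aa_\bullet)$ is Noetherian (it is a subalgebra of $R[t]$ finitely generated over the Noetherian ring $R$ once we know $\R(\overline{\aa_\bullet})$ is module-finite over it, hence $\R(\aa_\bullet)$ itself is Noetherian as a quotient presentation... more directly, $C$ module-finite over $A$ forces $C$ Noetherian and $A$ Noetherian), the intermediate algebra $B = \R((\aa_\bullet)^*)$ is a finitely generated $\R(\aa_\bullet)$-module, and $\R(\overline{\aa_\bullet})$ is a finitely generated $\R((\aa_\bullet)^*)$-module.

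With these two module-finiteness statements in hand, I would apply \Cref{fin-fil} twice: once to the pair $\aa_\bullet \le (\aa_\bullet)^*$ to conclude $\mathcal{C}_{\pm}^I(\aa_\bullet) = \mathcal{C}_{\pm}^I((\aa_\bullet)^*)$, and once to the pair $(\aa_\bullet)^* \le \overline{\aa_\bullet}$ to conclude $\mathcal{C}_{\pm}^I((\aa_\bullet)^*) = \mathcal{C}_{\pm}^I(\overline{\aa_\bullet})$. Chaining these equalities gives the desired conclusion $\mathcal{C}_{\pm}^I(\aa_\bullet) = \mathcal{C}_{\pm}^I((\aa_\bullet)^*) = \mathcal{C}_{\pm}^I(\overline{\aa_\bullet})$.

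The only genuinely delicate point is the tower argument: I must make sure that "$B$ finitely generated as an $A$-module" really follows from "$C$ finitely generated as an $A$-module" with $A \subseteq B \subseteq C$. This requires $A$ to be Noetherian so that the $A$-submodule $B$ of the finitely generated $A$-module $C$ is again finitely generated; here $\R(\aa_\bullet)$ is Noetherian because $C = \R(\overline{\aa_\bullet})$ being a module-finite extension of it forces $\R(\aa_\bullet)$ to be Noetherian (e.g., by Eakin--Nagata, or simply because $C \supseteq A$ module-finite and $C$ is Noetherian as a finitely generated algebra over the Noetherian ring $R$, and Eakin--Nagata then applies). I would state this as a short lemma or inline remark rather than dwelling on it. Everything else is a direct citation of \Cref{fin-fil} and the already-noted containments, together with the well-known facts (Proposition before the corollary, and \cite[Theorem 13.2.1]{HS06}) that $\R(\overline{\aa_\bullet})$ is module-finite over $\R(\aa_\bullet)$ when the hypothesis holds.
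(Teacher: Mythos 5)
Your overall plan (reduce everything to \Cref{fin-fil} via the containments $\aa_\bullet \le (\aa_\bullet)^* \le \overline{\aa_\bullet}$) is the paper's route, and your second application of \Cref{fin-fil}, to the pair $(\aa_\bullet)^* \le \overline{\aa_\bullet}$, is sound: module generators of $\R(\overline{\aa_\bullet})$ over $\R(\aa_\bullet)$ also generate it over the larger intermediate ring $\R((\aa_\bullet)^*)$. The genuine gap is the other half of your tower argument, namely the claim that $\R((\aa_\bullet)^*)$ is a finitely generated $\R(\aa_\bullet)$-module. That step needs $\R(\aa_\bullet)$ to be Noetherian, and the justification you give does not work: module-finiteness of $\R(\overline{\aa_\bullet})$ over $\R(\aa_\bullet)$ forces neither ring to be Noetherian, $\R(\overline{\aa_\bullet})$ is not known to be a finitely generated $R$-algebra here, and Eakin--Nagata needs the \emph{larger} ring to be Noetherian, which is exactly what is unavailable. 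Rees algebras of filtrations are in general non-Noetherian (the paper itself stresses this), and the hypothesis of the corollary is perfectly compatible with that: if every $\aa_n$ happens to be integrally closed --- e.g.\ $\aa_n=(x^{\lceil \sqrt{2}\,n\rceil})$ in $\mathbb{K}[x]$ --- then $\R(\overline{\aa_\bullet})=\R(\aa_\bullet)$ is trivially a finitely generated module over $\R(\aa_\bullet)$, yet this ring is not Noetherian. So the asserted module-finiteness of the intermediate Rees algebra over the bottom one is unjustified as written (also, a small slip: the module-finiteness of $\R(\overline{\aa_\bullet})$ over $\R(\aa_\bullet)$ is the hypothesis of the corollary, not something to be re-derived from \cite[Theorem 13.2.1]{HS06}, which concerns ordinary powers).

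Fortunately the problematic step is unnecessary, and deleting it recovers the paper's one-line argument. By the first (monotonicity) part of \Cref{fin-fil}, $\mathcal{C}_{\pm}^I(\aa_\bullet) \le \mathcal{C}_{\pm}^I((\aa_\bullet)^*) \le \mathcal{C}_{\pm}^I(\overline{\aa_\bullet})$; applying the ``Moreover'' part once, to the outer pair $\aa_\bullet \le \overline{\aa_\bullet}$ --- whose hypothesis is literally the assumption of the corollary --- gives $\mathcal{C}_{\pm}^I(\aa_\bullet)=\mathcal{C}_{\pm}^I(\overline{\aa_\bullet})$, and the middle term is squeezed to the common value. (Equivalently, combine your correct application to $(\aa_\bullet)^* \le \overline{\aa_\bullet}$ with the monotonicity $\mathcal{C}_{\pm}^I(\aa_\bullet)\le\mathcal{C}_{\pm}^I((\aa_\bullet)^*)$ and the outer equality.) No statement about $\R((\aa_\bullet)^*)$ as an $\R(\aa_\bullet)$-module, and no Noetherianity, is needed.
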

\begin{proof}
 The assertion follows from Theorem \ref{fin-fil}.
\end{proof}
{Since for an analytically unramified local ring $(R,\mm)$, ${\mathcal{R}(\overline{\mathfrak{a}^{\bullet}})}$ is a finitely generated $\mathcal{R}(\mathfrak{a}^{\bullet})$-module for every ideal $\aa$ of $R$, (see \cite[Corollary 9.2.1]{HS06}) we have:}
 \begin{corollary}\label{int-f}
Let $(R,\mathfrak{m})$ be an analytically unramified local ring of prime characteristic $p>0$. Let $\aa, I$ be non-zero proper ideals of $R$ with $\aa \subseteq \sqrt{I}$.  Then,  $\mathcal{C}^I(\aa^\bullet)=\mathcal{C}^I(\overline{\aa^\bullet}).$
\end{corollary}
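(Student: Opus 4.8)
The plan is to obtain this as a direct application of \Cref{int-fil} to the $\aa$-adic filtration $\aa_\bullet=\aa^\bullet$. First I would record why the statement is not vacuous: by construction $\nu_{\aa^\bullet}^I(p^e)=\sup\{r:\aa^r\not\subseteq I^{[p^e]}\}$ is exactly the classical quantity $\nu_\aa^I(p^e)$ of \Cref{F-thresholds}, so $\mathcal{C}^I(\aa^\bullet)=\mathcal{C}^I(\aa)$; the latter exists for an arbitrary Noetherian ring by \cite[Theorem 3.4]{DNP18}, and the hypothesis $\aa\subseteq\sqrt I$ guarantees each $\nu_\aa^I(p^e)$ is finite, so $\mathcal{C}^I(\aa^\bullet)\in\mathbb{R}_{\ge 0}$.

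The key external input is the structure theorem for analytically unramified rings: since $(R,\mm)$ is analytically unramified, the integral closure $\mathcal{R}(\overline{\aa^\bullet})=\bigoplus_{i\ge 0}\overline{\aa^i}t^i$ of the ordinary Rees algebra $\mathcal{R}(\aa^\bullet)=\bigoplus_{i\ge 0}\aa^i t^i$ inside $R[t]$ is a finitely generated $\mathcal{R}(\aa^\bullet)$-module. This is precisely \cite[Corollary 9.2.1]{HS06} (together with the identification of $\bigoplus_i\overline{\aa^i}t^i$ with the integral closure of $\mathcal{R}(\aa^\bullet)$ in $R[t]$). With this hypothesis in hand, \Cref{int-fil} applies verbatim with $\aa_\bullet=\aa^\bullet$ and yields $\mathcal{C}_{\pm}^I(\aa^\bullet)=\mathcal{C}_{\pm}^I((\aa^\bullet)^*)=\mathcal{C}_{\pm}^I(\overline{\aa^\bullet})$; in particular $\mathcal{C}_{+}^I(\overline{\aa^\bullet})=\mathcal{C}_{-}^I(\overline{\aa^\bullet})$, so $\mathcal{C}^I(\overline{\aa^\bullet})$ exists and equals $\mathcal{C}^I(\aa^\bullet)$.

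Unwinding the chain, the real content sits in \Cref{fin-fil}: module-finiteness of $\mathcal{R}(\overline{\aa^\bullet})$ over $\mathcal{R}(\aa^\bullet)$ forces the existence of an integer $k$ with $\overline{\aa^{n+k}}\subseteq\aa^n$ for all $n$ (this is the ``reduction number'' argument borrowed from \cite[Theorem 3.2]{HKN22}), which gives the two-sided bound $\nu_{\overline{\aa^\bullet}}^I(p^e)\le\nu_{\aa^\bullet}^I(p^e)+k$ and hence equality of the limsup/liminf invariants. So there is essentially no new obstacle here; the only point demanding care is citing the correct equivalence — namely that analytically unramified rings are exactly those for which the normalization of every Rees algebra is module-finite — and making sure the finiteness hypothesis of \Cref{int-fil} is thereby met. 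I would also remark that $\aa\subseteq\sqrt I$ is used only to keep $\mathcal{C}^I(\aa^\bullet)$ a finite real number, so that the two $\pm$ values coincide in $\mathbb{R}_{\ge 0}$ rather than being a common value $+\infty$.
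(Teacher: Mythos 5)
Your proposal is correct and follows exactly the paper's route: cite \cite[Corollary 9.2.1]{HS06} to get that $\mathcal{R}(\overline{\aa^\bullet})$ is module-finite over $\mathcal{R}(\aa^\bullet)$ for an analytically unramified local ring, then apply \Cref{int-fil} (i.e.\ \Cref{fin-fil}) to conclude $\mathcal{C}_{\pm}^I(\aa^\bullet)=\mathcal{C}_{\pm}^I(\overline{\aa^\bullet})$. Your added remarks on existence and finiteness via \cite[Theorem 3.4]{DNP18} and $\aa\subseteq\sqrt I$ only make explicit what the paper leaves implicit.
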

 
 {With the following example we illustrate
 that \Cref{fin-fil} and \Cref{int-fil} are not necessarily true without the hypothesis that $\mathcal{R}(\bb_\bullet)$ is finitely generated as an  $\mathcal{R}(\mathfrak{a}_{\bullet})$-module.}

\begin{example}
Let $S$ be a regular local ring of characteristic $p>0$ and $\aa,I \subseteq S$ be non-zero proper ideals with $\aa \subseteq \sqrt{I}$. Consider $R=S[[x]]/(x^n)$, for some $n \geq 2$. First note that $R$ being non-reduced, $R$ is not analytically unramified. Let $J=IR$ and $\bb= \aa R$, by Proposition \ref{basic-prop} $(5)$, $\mathcal{C}^J(\bb) =\mathcal{C}^I(\aa).$ Note that $x \in \overline{\bb^i}$ for all $i\ge 0$. We claim that $\mathcal{R}(\overline{\bb^\bullet})$ is not a finitely generated $\mathcal{R}(\bb^\bullet)$-module. For, if $\mathcal{R}(\overline{\bb^\bullet})$ is a finitely generated $\mathcal{R}(\bb^\bullet)$-module,  then there exists a positive integer $k$ such that $\overline{\bb^{n+k}} \subseteq \bb^n$ for all $n$. This implies that $x \in \bb^n$ for all $n$ which is a contradiction. Thus, the claim follows. Next, observe that $\overline{\bb^n} \not \subseteq J^{[p^e]} $ for all $n,e$. Consequently, $\nu_{\overline{\bb^\bullet}}^J(p^e) =\infty$ for all $e \ge 0$. Hence, $\mathcal{C}^J_{\pm}(\overline{\bb^\bullet}) =\infty.$ 
\end{example}

Next,  we  provide a necessary condition for  $\mathcal{C}_+^I(\aa_{\bullet})$ to be a non-negative real number. 

\begin{lemma}\label{exis-nec}
Let  $\mathfrak{a}_{\bullet}$ be a filtration.  Then $\mathcal{C}_+^I(\aa_{\bullet})< \infty$ if and only if there exists a positive integer $M$ such that $\aa_{Mp^e} \subseteq I^{[p^e]}$ for all $e \gg 0$. Moreover, in this case, we have $\sqrt{\aa_\bullet} \subseteq \sqrt{I}$. 
\end{lemma}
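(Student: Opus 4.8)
The plan is to prove both directions of the equivalence and then derive the radical containment. First I would establish the easy direction: suppose there exists a positive integer $M$ with $\aa_{Mp^e} \subseteq I^{[p^e]}$ for all $e \gg 0$. Then by definition of $\nu_{\aa_\bullet}^I(p^e)$ as a supremum over $r$ with $\aa_r \not\subseteq I^{[p^e]}$, and because the filtration is decreasing ($\aa_{r+1} \subseteq \aa_r$), once $\aa_{Mp^e} \subseteq I^{[p^e]}$ we also have $\aa_r \subseteq I^{[p^e]}$ for all $r \ge Mp^e$. Hence $\nu_{\aa_\bullet}^I(p^e) < Mp^e$ for all $e \gg 0$, which gives $\nu_{\aa_\bullet}^I(p^e)/p^e < M$ and therefore $\mathcal{C}_+^I(\aa_\bullet) = \limsup_e \nu_{\aa_\bullet}^I(p^e)/p^e \le M < \infty$.

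For the converse, suppose $\mathcal{C}_+^I(\aa_\bullet) = L < \infty$. Pick any integer $M > L$. By definition of $\limsup$, there is $e_0$ such that $\nu_{\aa_\bullet}^I(p^e)/p^e < M$, i.e. $\nu_{\aa_\bullet}^I(p^e) < Mp^e$, for all $e \ge e_0$. Since $Mp^e$ is an integer, $\nu_{\aa_\bullet}^I(p^e) \le Mp^e - 1 < Mp^e$, and by the definition of $\nu_{\aa_\bullet}^I(p^e)$ as the supremum of the $r$ with $\aa_r \not\subseteq I^{[p^e]}$, the value $r = Mp^e$ cannot satisfy $\aa_{Mp^e} \not\subseteq I^{[p^e]}$; hence $\aa_{Mp^e} \subseteq I^{[p^e]}$ for all $e \ge e_0$, which is exactly the desired statement (with "$e \gg 0$" meaning $e \ge e_0$). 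One subtlety to address carefully is the case where $\nu_{\aa_\bullet}^I(p^e)$ could a priori be $+\infty$; but $\mathcal{C}_+^I(\aa_\bullet) < \infty$ forces $\nu_{\aa_\bullet}^I(p^e)/p^e$ to be finite for all large $e$, so $\nu_{\aa_\bullet}^I(p^e) < \infty$ for $e \gg 0$ and the argument goes through. This bookkeeping around $\pm\infty$ is the only place requiring a little care, and it is the main (mild) obstacle.

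Finally, for the "moreover" part, I would show $\sqrt{\aa_\bullet} = \sqrt{\aa_1} \subseteq \sqrt{I}$. From $\aa_{Mp^e} \subseteq I^{[p^e]} \subseteq I$ for some fixed $e = e_1 \gg 0$, and using $\aa_1^{Mp^{e_1}} \subseteq \aa_{Mp^{e_1}}$ (the third filtration axiom), we get $\aa_1^{Mp^{e_1}} \subseteq I$. Thus every element of $\aa_1$ has a power lying in $I$, so $\aa_1 \subseteq \sqrt{I}$, and taking radicals gives $\sqrt{\aa_\bullet} = \sqrt{\aa_1} \subseteq \sqrt{I}$. I expect this last step to be entirely routine; the real content is the clean translation between the growth rate of $\nu_{\aa_\bullet}^I(p^e)$ and the containment $\aa_{Mp^e} \subseteq I^{[p^e]}$, together with the monotonicity of the filtration.
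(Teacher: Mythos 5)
Your proof is correct and follows essentially the same route as the paper: both translate $\mathcal{C}_+^I(\aa_\bullet)<\infty$ into the eventual bound $\nu_{\aa_\bullet}^I(p^e)<Mp^e$, convert that into the containment $\aa_{Mp^e}\subseteq I^{[p^e]}$ via the definition of $\nu$, and deduce the radical containment from $\aa_{Mp^e}\subseteq I$ together with $\sqrt{\aa_\bullet}=\sqrt{\aa_{Mp^e}}$. Your extra bookkeeping about the $\pm\infty$ cases and the explicit use of $\aa_1^{Mp^{e_1}}\subseteq\aa_{Mp^{e_1}}$ are just more detailed versions of steps the paper leaves implicit.
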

\begin{proof}
 Note that $\mathcal{C}_+^I(\aa_{\bullet})< \infty$ if and only if there exists  an integer $M>0$ such that  $\sup_{f \ge e} \frac{\nu_{\mathfrak{a}_{\bullet}}^I (p^f)}{p^f} <M$ for all $e \gg 0$, which happens if and only if $\nu_{\mathfrak{a}_{\bullet}}^I (p^e) <Mp^e$ for all $e\gg0$. Thus, $\mathcal{C}_+^I(\aa_{\bullet})< \infty$ if and only if  $\aa_{Mp^e} \subseteq I^{[p^e]}$ for all $e\gg 0$. Since  $\sqrt{\aa_{Mp^e}} \subseteq \sqrt{I}$ for all $e \gg 0$ and  $\sqrt{\aa_\bullet} = \sqrt{\aa_{Mp^e}}$, the second assertion follows.
\end{proof}

It is natural to ask whether $\sqrt{\aa_\bullet} \subseteq \sqrt{I}$ is a sufficient condition for $\mathcal{C}_+^I(\aa_{\bullet})$ to be a non-negative real number.   Here, we provide an  example which shows that the condition $\sqrt{\aa_\bullet} \subseteq \sqrt{I}$ is not sufficient for $\mathcal{C}_+^I(\aa_{\bullet})$ to be a non-negative real number. 
\begin{example}
Let $S$ be a  Noetherian domain of prime characteristic $p$ and $I$ be a non-zero proper ideal of $S$. Take $R=S[x]$ and $\aa_n=Ix^n$ for every $n$. Clearly, $\aa_\bullet$ is a filtration and $\sqrt {\aa_\bullet} = \sqrt {Ix} \subseteq \sqrt{IR}.$ However, for every $ e, r \ge 1$, $\aa_r \not\subseteq (IR)^{[p^e]}.$ Therefore,  $\mathcal{C}_{\pm}^{IR}(\aa_{\bullet})=\infty.$
\end{example}

In the following, we provide a sufficient condition on a filtration of ideals so that $\mathcal{C}^I_+(\aa_\bullet) < \infty.$  

\begin{theorem}\label{suff}
 Let $\aa_{\bullet}$ be a filtration such that for some $N \in \mathbb{N}$, ${\aa_{Ns}} \subseteq {I}^s$ for all $s \ge 0$. Then $ 0 \le \mathcal{C}^I_{\pm}(\aa_\bullet) \le N\mu(I)$, where $\mu(I)$ denotes the number of elements in a minimal generating set of $I$. Particularly, if $R$ is  F-pure, then $\mathcal{C}^I(\aa_\bullet)$ exists  and $$ 0 \le \mathcal{C}^I(\aa_\bullet)= \sup_{e \ge 0} \frac{\nu_{\aa_\bullet}^I(p^e)}{p^e}\le N\mu(I).$$
\end{theorem}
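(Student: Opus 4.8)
The plan is to adapt the argument behind Remark 2.20 of \cite{HMTW} (the effective bound $\nu^I_{\aa}(q) \le N(\mu(I)(q-1)+1)-1$) to the filtration setting. First I would show the key containment: for all $e \ge 0$, $\aa_{N\mu(I)p^e} \subseteq I^{[p^e]}$. The idea is that the hypothesis $\aa_{Ns} \subseteq I^s$ for all $s$, applied with $s = \mu(I)(p^e-1)+1$, gives $\aa_{N(\mu(I)(p^e-1)+1)} \subseteq I^{\mu(I)(p^e-1)+1}$; then one uses the standard pigeonhole fact that $I^{\mu(I)(p^e-1)+1} \subseteq I^{[p^e]}$ (cf. \cite[Lemma 2.4(1)]{HH02}, as already invoked in \Cref{alpha}), since in any product of $\mu(I)(p^e-1)+1$ generators, at least one generator must appear to a power $\ge p^e$. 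Since $N(\mu(I)(p^e-1)+1) \le N\mu(I)p^e$ and the $\aa_i$ are decreasing, this yields $\aa_{N\mu(I)p^e} \subseteq I^{[p^e]}$ for every $e$.

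From this containment it follows immediately that $\nu_{\aa_\bullet}^I(p^e) < N\mu(I)p^e$ for all $e$, hence $\dfrac{\nu_{\aa_\bullet}^I(p^e)}{p^e} < N\mu(I)$ for all $e$; taking $\limsup$ and $\liminf$ gives $0 \le \mathcal{C}^I_-(\aa_\bullet) \le \mathcal{C}^I_+(\aa_\bullet) \le N\mu(I)$, which is the first assertion. (Nonnegativity is automatic by \Cref{rmk} since $I$ is proper — note the hypothesis forces $\aa_\bullet$ to have radical inside $\sqrt I$, but we do not even need this; if one wants, \Cref{exis-nec} with $M = N\mu(I)$ also gives $\sqrt{\aa_\bullet} \subseteq \sqrt I$ as a bonus.)

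For the ``particularly'' clause, I would invoke \Cref{basic-prop}(1): when $R$ is $F$-pure, every ideal is Frobenius closed (by Theorem 2.17(3)), so in particular $I^{[p^e]} = (I^{[p^e]})^F$ for all $e \ge 0$; thus \Cref{basic-prop}(1) applies with $e_0 = 0$ and gives that $\mathcal{C}^I_{\pm}(\aa_\bullet) = \lim_{e\to\infty} \dfrac{\nu_{\aa_\bullet}^I(p^e)}{p^e} = \sup_{e\ge 0}\dfrac{\nu_{\aa_\bullet}^I(p^e)}{p^e}$. In particular the limit exists, so $\mathcal{C}^I(\aa_\bullet)$ exists, and combining with the bound just established we get $0 \le \mathcal{C}^I(\aa_\bullet) = \sup_{e\ge 0}\dfrac{\nu_{\aa_\bullet}^I(p^e)}{p^e} \le N\mu(I)$.

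I expect the main (really the only) point requiring a little care is the pigeonhole step $I^{\mu(I)(p^e-1)+1} \subseteq I^{[p^e]}$ — one must pick a fixed minimal generating set $f_1,\dots,f_{\mu(I)}$ of $I$, observe $I^m$ is generated by monomials of degree $m$ in the $f_j$, and note that once $m \ge \mu(I)(p^e-1)+1$ some exponent is $\ge p^e$, so such a monomial lies in $(f_1^{p^e},\dots,f_{\mu(I)}^{p^e}) = I^{[p^e]}$; but this is exactly the lemma already cited in \Cref{alpha}, so it can simply be quoted. Everything else is bookkeeping with $\limsup/\liminf$ and a direct appeal to \Cref{basic-prop}(1).
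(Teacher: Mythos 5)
Your proposal is correct and follows essentially the same route as the paper: the paper likewise applies the hypothesis with $s=\mu(I)(p^e-1)+1$ together with the pigeonhole containment $I^{\mu(I)(p^e-1)+1}\subseteq I^{[p^e]}$ from \cite[Lemma 2.4(1)]{HH02} to bound $\nu_{\aa_\bullet}^I(p^e)$ (it records the slightly sharper bound $N\mu(I)(p^e-1)+N-1$, while you round up to $N\mu(I)p^e$, which makes no difference after dividing by $p^e$), and then handles the $F$-pure case exactly as you do, via \Cref{basic-prop}(1) and boundedness of the sequence.
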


\begin{proof}
First, we claim that $\nu_{\aa_\bullet}^I(p^e) \le N\mu(I)(p^e-1)+N-1$ for all $e \ge 0$. For every $e \ge 0$, if $s \ge N \mu(I) (p^e-1) +N$, then 
\begin{align*} \aa_s & \subseteq \aa_{N \mu(I) (p^e-1) +N}  \subseteq I^{\mu(I) (p^e-1) +1}  \subseteq I^{[p^e]}, \end{align*} where the last containment follows from {Pigeonhole Principle}  (\cite[Lemma 2.4(1)]{HH02}).  Therefore, $\nu_{\aa_\bullet}^I(p^e) \le N\mu(I)(p^e-1)+N-1$  which implies that $$ \frac{\nu_{\aa_\bullet}^I(p^e)}{p^e} \le \frac{N\mu(I)(p^e-1)+N-1}{p^e}.$$ Now, applying $\limsup $ and $\liminf$ we get that $0 \le \mathcal{C}^I_{\pm}(\aa_\bullet)  \le N \mu(I).$  

Now, if $R$ is an $F$-pure ring, $I^{[p^e]} = (I^{[p^e]})^F$ for all $ e \ge 0$. By Proposition \ref{basic-prop} {$(1)$}, $$ {\mathcal{C}_{\pm}^I(\aa_\bullet)=\lim_{e \to \infty}\frac{\nu_{\aa_\bullet}^I(p^e)}{p^e}= \sup_{e \ge 0} \frac{\nu_{\aa_\bullet}^I(p^e)}{p^e}.}$$ Since $\left\{\frac{\nu_{\aa_\bullet}^I(p^e)}{p^e}\right\}$ is a bounded sequence, $\mathcal{C}^I(\aa_\bullet)$ exists, and hence, the assertion follows. 
\end{proof}
We now illustrate the above result with the following example.
\begin{example}\label{non-noe-fil}
Let $R=\mathbb{K}[x_1,\ldots,x_n]$ and $\mathfrak{m}=(x_1,\ldots,x_n)$, where   $\mathbb{K}$ is a field of prime characteristic {$p>0$}. Let $\aa$ and $I$ be non-zero proper homogeneous ideals of $R$ such that $\aa \subseteq I$ and $k$ be a positive integer. Take $\aa_i=\aa^{\lceil \frac{i}{k} \rceil +1}$ for $i\ge 1$. It is easy to see that $\aa_\bullet$ is a filtration of ideals in $R$. Note that $\aa_{ki} =\aa^{i+1} \subseteq I^i $ for all $i$. Thus, by Theorem \ref{suff}, $\mathcal{C}^{I}(\aa_\bullet) $ exists. One can also observe that $\R(\aa_\bullet)$ is not finitely generated {$R$-algebra}, see \cite{Ha-Nguyen}.
\end{example}

Next we prove that if $\aa_\bullet$ is a Noetherian filtration, then $\sqrt{\aa_\bullet} \subseteq \sqrt{I}$ is sufficient for $\mathcal{C}_{\pm}^I(\aa_{\bullet})$ to be non-negative real number as well as for the existence of  $\mathcal{C}^I(\aa_{\bullet})$.

\begin{theorem} \label{noe-fil}  
If $\aa_{\bullet}$ is a Noetherian filtration with $\sqrt{\aa_\bullet} \subseteq \sqrt{I}$, then $ 0 \le \mathcal{C}^I(\aa_\bullet)< \infty$, and it is equal to $r  \mathcal{C}^I(\aa_r^\bullet)$ for some $r \in \mathbb{N}$. Moreover, if $R$ is regular $F$-finite ring {and essentially of finite type over a field}, then $\mathcal{C}^I(\aa_\bullet)$ is rational.
\end{theorem}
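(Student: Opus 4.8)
The plan is to use the Noetherian hypothesis to reduce to a standard-graded situation and then transfer the existence and finiteness of the classical $F$-threshold. First I would recall that if $\aa_\bullet$ is Noetherian, then by \cite[Proposition 2.1]{P88} the Veronese subalgebra $\R^{[r]}(\aa_\bullet)$ is a standard graded $R$-algebra for some $r \in \NN$; this means $\aa_{rs} = (\aa_r)^s$ for all $s \ge 0$, i.e., the $r$-th Veronese filtration coincides with the ordinary power filtration $\aa_r^\bullet$ of the ideal $\aa_r$. The hypothesis $\sqrt{\aa_\bullet} \subseteq \sqrt I$ gives $\sqrt{\aa_r} = \sqrt{\aa_\bullet} \subseteq \sqrt I$, so by \Cref{F-thresholds} together with \cite[Theorem 3.4]{DNP18} the classical $F$-threshold $\mathcal{C}^I(\aa_r)$ exists and is a non-negative real number. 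Also $\aa_r \subseteq \sqrt I$ implies $\aa_r^N \subseteq I$ for some $N$, and then $\aa_{rs} = \aa_r^s \subseteq I^{s/N}$ for $s$ divisible by $N$; this puts us in the setting of \Cref{suff} (after the obvious rescaling), giving $\mathcal{C}_\pm^I(\aa_\bullet) < \infty$.

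Next I would establish the key comparison $\mathcal{C}^I(\aa_\bullet) = r\,\mathcal{C}^I(\aa_r^\bullet)$. The idea is that $\nu^I_{\aa_\bullet}(p^e)$ and $\nu^I_{\aa_r^\bullet}(p^e)$ are closely tied: since $\aa_{rs} = \aa_r^s$, the largest $s$ with $\aa_r^s \not\subseteq I^{[p^e]}$ is exactly $\lfloor \nu^I_{\aa_\bullet}(p^e)/r\rfloor$ roughly, because $\aa_j \supseteq \aa_r^{\lceil j/r\rceil}$ and $\aa_j \subseteq \aa_r^{\lfloor j/r \rfloor}$. More precisely, for any $j$, writing $j = r\lfloor j/r\rfloor + t$ with $0 \le t < r$, one has $\aa_r^{\lceil j/r \rceil} \subseteq \aa_j \subseteq \aa_r^{\lfloor j/r\rfloor}$, so $r\,\nu^I_{\aa_r^\bullet}(p^e) \le \nu^I_{\aa_\bullet}(p^e) \le r\,\nu^I_{\aa_r^\bullet}(p^e) + r - 1$ (with the left inequality possibly off by a bounded amount; I would check the exact constants carefully). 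Dividing by $p^e$ and taking $\limsup$ and $\liminf$, the bounded additive error washes out, giving $\mathcal{C}_\pm^I(\aa_\bullet) = r\,\mathcal{C}_\pm^I(\aa_r^\bullet)$; since the right-hand side is a genuine limit by \cite[Theorem 3.4]{DNP18}, so is the left-hand side, and $\mathcal{C}^I(\aa_\bullet) = r\,\mathcal{C}^I(\aa_r)$ exists and is finite.

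For the final rationality claim, when $R$ is regular, $F$-finite, and essentially of finite type over a field, I would invoke the known rationality of classical $F$-thresholds in this setting (e.g., the results on rationality and discreteness of $F$-thresholds/jumping numbers for such rings, available once the ambient ring is a localization of a finitely generated algebra over an $F$-finite field), to conclude $\mathcal{C}^I(\aa_r) \in \QQ$, hence $\mathcal{C}^I(\aa_\bullet) = r\,\mathcal{C}^I(\aa_r) \in \QQ$.

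The main obstacle I anticipate is pinning down the exact constants in the two-sided estimate relating $\nu^I_{\aa_\bullet}(p^e)$ and $\nu^I_{\aa_r^\bullet}(p^e)$ — in particular making sure the comparison is valid for the \emph{limsup} and \emph{liminf} simultaneously and does not secretly require monotonicity of $\{\nu^I_{\aa_\bullet}(p^e)/p^e\}$, which need not hold unless $I$ is Frobenius-closed in the relevant powers (it is, since $R$ is regular, so $I^{[p^e]}$ is even radical-stable in the needed sense and \Cref{basic-prop}(1) applies). A secondary point requiring care is the precise citation and hypotheses for rationality of the classical $F$-threshold over an $F$-finite regular ring essentially of finite type over a field; if a clean reference is not available one may instead need to pass to a power $\gin$-type argument or cite the discreteness of $F$-jumping numbers, but I expect a direct citation suffices.
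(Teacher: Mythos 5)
Your proposal is correct and takes essentially the same route as the paper's proof: reduce via \cite[Proposition 2.1]{P88} to a Veronese with $\aa_{rs}=\aa_r^s$, get finiteness from \Cref{suff} using $\aa_{rNs}\subseteq I^s$, establish the two-sided estimate $r\,\nu_{\aa_r^{\bullet}}^I(p^e)\le \nu_{\aa_\bullet}^I(p^e)\le r\bigl(\nu_{\aa_r^{\bullet}}^I(p^e)+1\bigr)-1$, squeeze using the existence of $\mathcal{C}^I(\aa_r^{\bullet})$ from \cite[Theorem 3.4]{DNP18}, and cite rationality of classical $F$-thresholds (the paper uses \cite[Theorem 3.1]{MMK08}). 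Your parenthetical worry about monotonicity is moot, exactly as you suspected: the sandwich argument needs no monotonicity, and regularity is only assumed for the rationality statement.
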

\begin{proof}
Since $\aa_\bullet$ is a Noetherian filtration, by \cite[Proposition 2.1]{P88}, there exists a positive integer $r$ such that $\R^{[r]}(\aa_\bullet)$ is standard graded $R$-algebra, i.e., $\aa_{nr}=\aa_r^n$ for all $n \ge 0$. Note that $\aa_r \subseteq \sqrt{\aa_\bullet} \subseteq \sqrt{I}$. So, there exists a positive integer $N$ such that  $\aa_r^N \subseteq I.$ This implies that $\aa_{rNs} = \aa_r^{Ns} \subseteq I^s$ for all $s \ge 1$.  By Theorem \ref{suff}, $0 \le \mathcal{C}^I_{\pm}(\aa_\bullet)<\infty$. Consequently, $\nu_{\mathfrak{a}_{\bullet}}^I (p^e) < \infty$ for all $e \ge 0$.  Next, we claim that for all $e \ge 0$, $$r  \nu_{{\mathfrak{a}_r^{\bullet}}}^I (p^e) \leq \nu_{\mathfrak{a}_{\bullet}}^I (p^e) \leq r (\nu_{{\mathfrak{a}_r^{\bullet}} }^I (p^e)+1)-1.$$ Let $e\ge 0$ be any.  Suppose that  $\aa_r^n \not\subseteq I^{[p^e]}$ for some $n \ge 0$, then  $\aa_{rn} =\aa_r^n \not\subseteq I^{[p^e]}$ which implies that $r  \nu_{{\mathfrak{a}_r^{\bullet}}}^I (p^e) \leq \nu_{\mathfrak{a}_{\bullet}}^I (p^e)$.   For $s=\nu_{{\mathfrak{a}_r^{\bullet}}}^I (p^e) +1$, we have $\aa_r^s \subseteq I^{[p^e]}$ which implies that $\aa_{rs} \subseteq I^{[p^e]}$. Thus,  $\nu_{\mathfrak{a}_{\bullet}}^I (p^e) \leq r  (\nu_{{\mathfrak{a}_r^{\bullet}}}^I (p^e) +1)-1$, and hence, the claim follows.  Consequently, we get 
 $$r \frac{\nu_{{\mathfrak{a}_r^{\bullet}}}^I (p^e)}{p^e} \leq \frac{\nu_{\mathfrak{a}_{\bullet}}^I (p^e)}{p^e} \leq r  \frac{\nu_{{\mathfrak{a}_r^{\bullet}} }^I (p^e)}{p^e} +\frac{r-1}{p^e}, $$ for $e \geq 0$. By \cite[Theorem 3.4]{DNP18}, we know that $\mathcal{C}^I(\aa_r^\bullet)$ exists. Now, by applying Sandwich theorem in the above inequality,  we get $$\lim_{e \to \infty} \frac{\nu_{{\mathfrak{a}_{\bullet}}}^I (p^e)}{p^e} =r  \lim_{e \to \infty} \frac{\nu_{{\mathfrak{a}_r^{\bullet}}}^I (p^e)}{p^e}=r  \mathcal{C}^I(\aa_r^\bullet).$$ Hence, $\mathcal{C}^I(\aa_\bullet)$ exists and it is equal to $r  \mathcal{C}^I(\aa_r^\bullet)$. The rationality of $\mathcal{C}^I(\aa_\bullet)$ follows from \cite[{Theorem 3.1}]{MMK08}. 
\end{proof}
\begin{example}
Let $\aa,\bb$ be non-zero proper ideals of $R$ such that $\aa^2 \subsetneq \bb \subsetneq \aa$ and  $\bb \subseteq \sqrt{I}.$ By \cite[Theorem 3.4]{DNP18}, $\C^I(\bb^{\bullet})$ exists. Now, take $\aa_0=R, \aa_1=\aa, \aa_2=\bb$, $\aa_{2i+1}=\aa \bb^i$ and $\aa_{2i}=\bb^i$. One can verify that $\aa_\bullet$ is a Noetherian filtration of ideals in $R$ with $\aa_{2i}=\bb^i=\aa_2^i$ for all $i$.  Therefore, by Theorem \ref{noe-fil}, $\mathcal{C}^I(\aa_\bullet)$ exists and it is equal to $2 \mathcal{C}^I(\bb^\bullet)$.
\end{example}

{The Rees algebra of a filtration of ideals in general is not necessarily Noetherian, see Example \ref{non-noe-fil} or even when the filtration    is of symbolic powers of an ideal in $R$ (cf. \cite{C91, H82, R85}).} {So, next, we prove that symbolic $F$-threshold of a non-zero ideal $\aa$ with respect to $I$ exists, i.e., $\mathcal{C}^I(\aa^{(\bullet)})$ is a non-negative real number  if $\aa \subseteq \sqrt I.$ Recall that the \emph{big-height} of an ideal $I$, denoted by $\text{big-height}(I)$, is $\max\{\hgt( \mathfrak{p}):\mathfrak{p}\in \Ass(I)\}$, where $\Ass(I)$ denotes the set of associated primes of $I$}.
\begin{theorem}\label{exist}
Assume that $R$ is a regular ring. Let $\aa$ and  {$I$} be a non-zero proper ideals in $R$ {with} $\aa \subseteq \sqrt{I}$.  Then $\mathcal{C}^I(\aa^{(\bullet)})$ exists, where $\aa^{(\bullet)}= \{ \aa^{(i)}\}_{i \ge 0}$ is the symbolic power filtration of $\aa$.
\end{theorem}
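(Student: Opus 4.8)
The plan is to reduce the symbolic power filtration $\aa^{(\bullet)}$ to a Noetherian filtration that is comparable to it, so that \Cref{noe-fil} applies. The key classical fact is that in a regular ring the symbolic Rees algebra $\R(\aa^{(\bullet)})$, while not necessarily Noetherian, admits a uniform bound relating symbolic powers to ordinary powers: for any ideal $\aa$ in a regular ring $R$, one has $\aa^{(bn)} \subseteq \aa^n$ for all $n$, where $b = \text{big-height}(\aa)$ (this is the Ein--Lazarsfeld--Smith / Hochster--Huneke uniform containment theorem, valid in prime characteristic via tight closure). In particular, writing $N$ for a positive integer with $\aa^N \subseteq I$ (which exists since $\aa \subseteq \sqrt I$), we get $\aa^{(bNs)} \subseteq \aa^{Ns} \subseteq I^s$ for all $s \ge 0$.

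With this containment in hand, \Cref{suff} applied with the filtration $\aa^{(\bullet)}$ and the integer $bN$ immediately gives $0 \le \mathcal{C}^I_{\pm}(\aa^{(\bullet)}) \le bN\mu(I) < \infty$. So the only remaining point is existence, i.e. that $\mathcal{C}^I_+(\aa^{(\bullet)}) = \mathcal{C}^I_-(\aa^{(\bullet)})$. For this I would sandwich $\aa^{(\bullet)}$ between two Noetherian filtrations with equal $F$-thresholds. Concretely: on one side, $\aa^\bullet \le \aa^{(\bullet)}$, and on the other side the uniform containment gives $\aa^{(bn)} \subseteq \aa^n$, i.e. $\aa^{(\bullet)}$ is "$\aa$-admissible up to rescaling." Passing to a suitable Veronese, consider the $b$-th Veronese $\{\aa^{(bn)}\}_{n}$: we have $\aa^{bn} \subseteq \aa^{(bn)} \subseteq \aa^n$ for all $n$, so $\{\aa^{(bn)}\}_n$ is sandwiched between $\{\aa^{bn}\}_n$ and $\{\aa^n\}_n$, both of which are ordinary-power (hence Noetherian) filtrations whose $F$-thresholds exist by \cite[Theorem 3.4]{DNP18}. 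By \Cref{fin-fil} the monotonicity $\mathcal{C}^I_{\pm}(\{\aa^{bn}\}) \le \mathcal{C}^I_{\pm}(\{\aa^{(bn)}\}) \le \mathcal{C}^I_{\pm}(\{\aa^n\})$ holds; and since $\mathcal{R}(\{\aa^n\})$ is module-finite over $\mathcal{R}(\{\aa^{bn}\})$ (the integral closure of the Veronese, or simply because $\aa^n \cdot \aa^{b n} \subseteq \aa^{(b+1)n}$ type arguments bound the gap), \Cref{fin-fil} forces equality $\mathcal{C}^I_{\pm}(\{\aa^{bn}\}) = \mathcal{C}^I_{\pm}(\{\aa^n\})$, hence $\mathcal{C}^I_{\pm}(\{\aa^{(bn)}\})$ equals this common value and in particular $\limsup = \liminf$ for the Veronese. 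Finally I would relate $\nu_{\aa^{(\bullet)}}^I(p^e)$ to $\nu_{\{\aa^{(bn)}\}}^I(p^e)$ by the elementary inequalities $b\cdot\nu_{\{\aa^{(bn)}\}}^I(p^e) \le \nu_{\aa^{(\bullet)}}^I(p^e) \le b\cdot(\nu_{\{\aa^{(bn)}\}}^I(p^e)+1)$ (using $\aa^{(r)} \subseteq \aa^{(b\lfloor r/b\rfloor)}$ and $\aa^{(b\lceil r/b\rceil)} \subseteq \aa^{(r)}$), divide by $p^e$, and take limits; the $\frac{b}{p^e}$ error terms vanish, so $\mathcal{C}^I(\aa^{(\bullet)})$ exists and equals $b\cdot\mathcal{C}^I(\{\aa^{(bn)}\})$.

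The main obstacle I anticipate is making the middle step fully rigorous: the uniform containment $\aa^{(bn)} \subseteq \aa^n$ is exactly what is needed, but one must be careful about which version of "symbolic power" is in play (the paper defines $\aa^{(i)}$ via all associated primes, so big-height rather than just height is the right constant) and one must confirm the uniform bound holds in this generality in prime characteristic — this is where I would cite Hochster--Huneke's tight-closure proof of the containment theorem, which applies since $R$ is regular. A secondary subtlety is verifying the module-finiteness claim that pins down equality of the $F$-thresholds of $\{\aa^{bn}\}$ and $\{\aa^n\}$; if one prefers to avoid that, an alternative is to invoke \Cref{noe-fil} directly on a Noetherian filtration trapping $\aa^{(\bullet)}$ — but since $\aa^{(\bullet)}$ itself need not be Noetherian, the cleanest route really is the Veronese-sandwich argument above combined with the already-established finiteness from \Cref{suff}.
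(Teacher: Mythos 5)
The finiteness half of your argument is correct and is essentially the paper's: the uniform containment gives $\aa^{(bNs)}\subseteq I^{s}$ for all $s$, and \Cref{suff} then yields $0\le \mathcal{C}^I_{\pm}(\aa^{(\bullet)})\le bN\mu(I)<\infty$. (The paper is slightly more careful here: it first reduces to the case of a radical ideal via $\aa^{(\bullet)}\le \sqrt{\aa}^{(\bullet)}$, \Cref{basic-prop} and \Cref{fin-fil}, and only then invokes \cite[Theorem 2.6]{HH02} for the radical ideal $\sqrt{\aa}$ with $H=\text{big-height}$.) The genuine gap is in your existence step. The claim that $\mathcal{R}(\{\aa^{n}\}_{n})$ is module-finite over $\mathcal{R}(\{\aa^{bn}\}_{n})$, and hence that $\mathcal{C}^I_{\pm}(\{\aa^{bn}\}_{n})=\mathcal{C}^I_{\pm}(\{\aa^{n}\}_{n})$, is false: by \Cref{new-fil-fil} (equivalently \cite[Proposition 2.2(iii)]{HMTW}) one has $\mathcal{C}^I(\{\aa^{bn}\}_{n})=\mathcal{C}^I((\aa^{b})^{\bullet})=\tfrac{1}{b}\,\mathcal{C}^I(\aa^{\bullet})$, so the two thresholds differ by the factor $b$ whenever $\mathcal{C}^I(\aa^{\bullet})\neq 0$, and correspondingly $\bigoplus_{n}\aa^{n}t^{n}$ is not a finite module over $\bigoplus_{n}\aa^{bn}t^{n}$ already for $\aa=(x)\subseteq \mathbb{K}[x]$ and $b=2$. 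Consequently the sandwich $\{\aa^{bn}\}_{n}\le \{\aa^{(bn)}\}_{n}\le \{\aa^{n}\}_{n}$ only gives $\tfrac{1}{b}\mathcal{C}^I(\aa^{\bullet})\le \mathcal{C}^I_{\pm}(\{\aa^{(bn)}\}_{n})\le \mathcal{C}^I(\aa^{\bullet})$, i.e.\ finiteness again; being trapped between two filtrations with (different) existing thresholds says nothing about $\limsup=\liminf$ for the middle one, so existence is not established by your second paragraph.

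The missing observation — and the paper's actual route — is that regularity makes existence automatic once boundedness is known: in a regular ring the Frobenius is faithfully flat, so $R$ is $F$-pure and every $I^{[p^e]}$ is Frobenius closed; hence by part (1) of \Cref{basic-prop} (or directly by the final clause of \Cref{suff}, whose $F$-purity hypothesis you satisfy) the sequence $\nu^{I}_{\aa^{(\bullet)}}(p^e)/p^e$ is non-decreasing and $\mathcal{C}^I_{+}(\aa^{(\bullet)})=\mathcal{C}^I_{-}(\aa^{(\bullet)})=\sup_{e}\nu^{I}_{\aa^{(\bullet)}}(p^e)/p^e$. Thus your first paragraph, supplemented by the single remark that a regular ring is $F$-pure (and, if you wish to match the paper's level of care with \cite{HH02}, by first passing to $\sqrt{\aa}$ using \Cref{fin-fil}), already completes the proof; the entire Veronese-sandwich discussion should be deleted.
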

\begin{proof}
First, note that $\aa^{(\bullet)} \le \sqrt{\aa}^{(\bullet)}$.  Since $R$ is a regular ring, $I^{[p^e]} = (I^{[p^e]})^F$ for all $ e \ge 0$.
By Proposition \ref{basic-prop} and Theorem \ref{fin-fil}, $${ \mathcal{C}_{\pm}^I(\aa^{(\bullet)})= \sup_{e \ge 0} \frac{\nu_{\aa^{(\bullet)}}^I(p^e)}{p^e} \le \sup_{e \ge 0} \frac{\nu_{\sqrt{\aa}^{(\bullet)}}^I(p^e)}{p^e}= \mathcal{C}_{\pm}^I(\sqrt{\aa}^{(\bullet)}).}$$ So, it is enough to prove the assertion for radical ideals. So without loss of generality we may assume that $\aa$ is a radical ideal. By \cite[Theorem 2.6]{HH02}, $\aa^{(Hs)} \subseteq \aa^s$ for all $s$, where $H=\text{big-height}(\aa)$.  Since $\aa \subseteq \sqrt I$, there exists a positive integer $t$ such that $\aa^t \subseteq I$. Now, for $N=Ht$, $\aa^{(Ns)} \subset \aa^{ts} \subseteq I^s$ for all $s.$ Thus, by Theorem \ref{suff}, $\mathcal{C}^I(\aa^{(\bullet)})$ exists. Hence, the assertion follows. 
\end{proof}

{Since $\aa^{\bullet}\leq \aa^{(\bullet)}$, we always have $\mathcal{C}^I(\aa^{\bullet})\leq \mathcal{C}^I(\aa^{(\bullet)})$.
In the following example, we illustrate that in general, $F$-threshold of symbolic filtration need not be equal to $F$-threshold of ordinary power filtration. 
\begin{example}\label{sym-ord-exm}
Let $R=\mathbb{K}[x_1,\ldots,x_n]$ and $\mathfrak{m}=(x_1,\ldots,x_n)$, where   $\mathbb{K}$ is a field of prime characteristic $p>0$.  Let $\aa=\bigcap_{1 \le i<j \le n}(x_i,x_j) =\left(\frac{x_1 \cdots x_n}{x_i} : 1 \le i \le n\right)$.  Observe that $\aa^{(n)}=\bigcap_{1 \le i<j \le n}(x_i,x_j)^n$ for all $n.$ We claim that $\aa^{(r)} \subseteq \mathfrak{m}^{[p^e]}$ if and only if $r \ge 2(p^e-1)+1.$ If $r \ge 2(p^e-1)+1$, then $(x_i,x_j)^{r} \subseteq (x_i,x_j)^{2(p^e-1)+1} \subseteq (x_i,x_j)^{[p^e]}$  {where the last inclusion follows from Pigeonhole Principle} (cf. \cite[Lemma 2.4 (1)]{HH02}).  Consequently, $\aa^{(r)} =  \bigcap_{1 \le i<j \le n}(x_i,x_j)^r \subseteq \mathfrak{m}^{[p^e]}.$ One can note that $(x_1 \cdots x_n)^{p^e-1} \in \bigcap_{1 \le i <j \le n}(x_i,x_j)^{2(p^e-1)}=\aa^{(2(p^e-1))}$ and that $(x_1 \cdots x_n)^{p^e-1} \not\in \mathfrak{m}^{[p^e]}.$ Thus, the claim follows. Consequently, we get  $\nu_{\mathfrak{a}^{(\bullet)}}^{\mathfrak{m}} (p^e)= 2(p^e-1)$ for all $e$.
Hence,
$$\mathcal{C}^{\mathfrak{m}}(\aa^{(\bullet)})=\lim_{e \to \infty} \frac{\nu_{{\mathfrak{a}^{(\bullet)}}}^{\mm} (p^e)}{p^e}=\lim_{e \to \infty} \frac{2(p^e-1)}{p^e}=2.$$ 

 Next, we claim that $\C^{\mm}(\aa^\bullet)=\frac{n}{n-1}.$ First, note that $(x_1 \cdots x_n)^{n-1} \in \aa^n$, and therefore, for all $e$, $\left((x_1\cdots x_n)^{n-1} \right)^{\lfloor \frac{p^e-1}{n-1} \rfloor} \in \aa^{n\lfloor \frac{p^e-1}{n-1} \rfloor}$.  Since $p^e -1 \ge (n-1) \lfloor \frac{p^e-1}{n-1} \rfloor$, we get that  for all $e$, $n\lfloor \frac{p^e-1}{n-1} \rfloor \le \nu_{\aa^\bullet}^{\mm}(p^e).$ Next, we claim that if $r\ge n\left( \frac{p^e-1}{n-1}\right) +1$, then $\aa^r \subseteq \mm^{[p^e]}$. Note that for any non-negative integers  $\alpha_1, \ldots, \alpha_n$ with $\alpha_1+\cdots+\alpha_n=r$, $\prod_{i=1}^n \left(\frac{x_1 \cdots x_n}{x_i}\right)^{\alpha_i}=\prod_{i=1}^n x_i^{r-\alpha_i} .$  If $r-\alpha_i \le p^e-1$ for all $i$, then $nr-r \le n(p^e-1)$ which is a contradiction to the fact that  $r\ge n\left( \frac{p^e-1}{n-1}\right) +1$. Thus, the second claim follows, and hence,  $n\lfloor \frac{p^e-1}{n-1} \rfloor \le \nu_{\aa^\bullet}^{\mm}(p^e) \le n\left( \frac{p^e-1}{n-1}\right) +1$ for all $e$.  Hence, it follows immediately from Sandwich Theorem of Limits that  
$$\mathcal{C}^{\mathfrak{m}}(\aa^{\bullet})=\lim_{e \to \infty} \frac{\nu_{{\mathfrak{a}^{\bullet}}}^{\mm} (p^e)}{p^e}=\frac{n}{n-1}.$$ \end{example}


\section{$(I,p)$-admissible filtrations and their $F$-thresholds}\label{sec-sym}
{In this section, we introduce a class of filtrations which we call as $(I,p)$-admissible filtrations and study their $F$-thresholds. We show that the ordinary power filtration, the symbolic power filtration and  the integral closure power filtration of an ideal are standard examples of $(I,p)$-admissible filtrations. Moreover, we show that $(I,p)$-admissible filtration are closed under products, intersections (for regular rings) and binomial sums. We begin with the definition of $(I,p)$-admissible filtration.} 
\begin{definition}
 Let $R$ be a ring of prime characteristic $p>0$. Let $\aa_\bullet$ be a filtration and $I$ be an ideal in $R$.   We  say that $\mathfrak{a}_{\bullet}$ is an \emph{$(I,p)$-admissible  filtration}, if 
   \begin{enumerate}
    \item  for some $k$, $\mathfrak{a}_{k}\subset I$, and 
    \item  there exist integers $h>0$  and  $c$ such that $$\mathfrak{a}_{(h+m)p^e+c}\subseteq \mathfrak{a}_{m+1}^{[p^e]} \text{ for all } m, e\geq 0.$$
   \end{enumerate}
\end{definition}
Next, we see that for a non-zero ideal $\aa$ with $ \aa \subset \sqrt I$, the standard filtrations associated with $\aa$ are $(I,p)$-admissible  filtrations when $R$ or $\aa$ satisfy some nice properties. 
\begin{example} Let $\aa, I$ be proper  non-zero ideals such that $\aa \subseteq \sqrt I$. Let $\aa$ be minimally generated by $f_1,\ldots,f_h$ and let $H$ be the big height of $\aa$.
\par (a)  It follows from \cite[Lemma 2.4]{HH02}  that $\aa^{(h+m)p^e}\subseteq (\aa^{m+1})^{[p^e]}$ for all $m, e \ge 0.$ Therefore, $\mathfrak{a}^{\bullet}$ is  an $(I,p)$-admissible filtration.
\par (b) {Assume that $R$ is a regular ring and $\aa$ is a radical ideal.   By \cite[Theorem 2.6]{HH02}, $\aa^{(Hs)} \subseteq \aa^s$ for all $s$. Since $\aa \subseteq \sqrt I$, there exists a positive integer $k$ such that $\aa^k \subseteq I$. Therefore, $\aa^{(Hk)} \subseteq I$.  Now, by \cite[Lemma 2.6]{GH17}, for all $m, e\geq 0$,
$$\aa^{((H+m)p^e-H+1)}\subseteq \left(\aa^{(m+1)}\right)^{[p^e]}.$$
Thus $\aa^{(\bullet)}$ is an $(I,p)$-admissible filtration.}

\par (d) Assume that $\aa$ satisfies Brian\c{c}on-Skoda condition, i.e., there exists a positive integer $k$ such that $\overline{\aa^{k+m}}\subseteq \aa^{m}$ for all $m \ge 0$ (for example every ideal in reduced $F$-finite ring of positive characteristic $p$ satisfy Brian\c{c}on-Skoda condition, see \cite[Theorem 13.4.8]{HS06}). Then, for all $m,e\ge 0$, using \cite[Lemma 2.4]{HH02}, we get $\overline{\aa^{(k+h+m)p^e}}\subseteq \overline{\aa^{(h+m)p^e+k}}\subseteq \aa^{(h+m)p^e}\subseteq (\aa^{m+1})^{[p^e]} \subseteq (\overline{\aa^{m+1}})^{[p^e]}$. Therefore, $\overline{\mathfrak{a}^{\bullet}}$ is  an $(I,p)$-admissible filtration.
\par (e) Let $\aa_{\bullet}$ be a filtration of ideals in $R$ such that $I^\bullet \le \aa_\bullet$ and there exists a positive integer $N $ such that ${\aa_{(N+s)p^e}} \subseteq ({I}^{s+1})^{[p^e]}$ for all $s,e \ge 0$.  Then, $\aa_\bullet$ is an $(I,p)$-admissible filtration. 
 \end{example}

Next, we show that one can construct new $(I,p)$-admissible filtrations using given $(I,p)$-admissible filtrations. One can see that if $\aa_\bullet$ and $\bb_\bullet$ are  two filtrations then 
\begin{enumerate}
\item $\cc_\bullet= \{ \aa_n \bb_n\}_{n\geq 0}$ is a filtration;
\item $\dd_\bullet= \{ \aa_n\cap \bb_n\}_{n\geq 0}$ is a filtration;
\item $\ee_\bullet= \{ \sum_{i=0}^n\aa_i\bb_{n-i}\}_{n\geq 0}$ is a filtration.
\end{enumerate}
\begin{proposition}\label{filwith} Let $\aa_\bullet$ and $\bb_\bullet$ be  two $(I,p)$-admissible filtrations. Then, 
\begin{enumerate}
\item $\cc_\bullet= \{ \aa_n \bb_n\}_{n\geq 0}$ is an $(I,p)$-admissible filtration;
\item if $R$ is regular, then $\dd_\bullet= \{ \aa_n\cap \bb_n\}_{n\geq 0}$ is an $(I,p)$-admissible filtration;
\item $\ee_\bullet= \{ \sum_{i=0}^n\aa_i\bb_{n-i}\}_{n\geq 0}$ is an $(I,p)$-admissible filtration.
\end{enumerate}
\end{proposition}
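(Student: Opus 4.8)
The plan is to exhibit, for each of the three constructions, an explicit pair $(h,c)$ witnessing condition $(2)$ of the definition of an $(I,p)$-admissible filtration; condition $(1)$ will fall out immediately in each case. I will use the admissibility data of the hypotheses in the form $\aa_{k_1}\subseteq I$, $\aa_{(h_1+m)p^e+c_1}\subseteq\aa_{m+1}^{[p^e]}$ for all $m,e\ge 0$, and $\bb_{k_2}\subseteq I$, $\bb_{(h_2+m)p^e+c_2}\subseteq\bb_{m+1}^{[p^e]}$ for all $m,e\ge 0$, together with the elementary identities $(JK)^{[q]}=J^{[q]}K^{[q]}$ and $\big(\sum_i J_i\big)^{[q]}=\sum_i J_i^{[q]}$, the monotonicity $\aa_i^{[q]}\subseteq\aa_j^{[q]}$ for $i\ge j$, and $\aa_0=\bb_0=R$. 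For part $(2)$ I will additionally use that in a regular ring the Frobenius functor is exact, so that applying it to $0\to R/(J\cap K)\to R/J\oplus R/K$ yields $(J\cap K)^{[q]}=J^{[q]}\cap K^{[q]}$.

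For parts $(1)$ and $(2)$ I would take $h=\max\{h_1,h_2\}$ and $c=\max\{c_1,c_2\}$. Since then $(h+m)p^e+c\ge(h_i+m)p^e+c_i$ for $i=1,2$, monotonicity and admissibility give $\aa_{(h+m)p^e+c}\subseteq\aa_{m+1}^{[p^e]}$ and $\bb_{(h+m)p^e+c}\subseteq\bb_{m+1}^{[p^e]}$ simultaneously. For the product this yields $\cc_{(h+m)p^e+c}\subseteq\aa_{m+1}^{[p^e]}\bb_{m+1}^{[p^e]}=(\aa_{m+1}\bb_{m+1})^{[p^e]}=\cc_{m+1}^{[p^e]}$, and $\cc_{k_1}\subseteq\aa_{k_1}\subseteq I$. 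For the intersection, the regular-ring identity gives $\dd_{(h+m)p^e+c}\subseteq\aa_{m+1}^{[p^e]}\cap\bb_{m+1}^{[p^e]}=(\aa_{m+1}\cap\bb_{m+1})^{[p^e]}=\dd_{m+1}^{[p^e]}$, and $\dd_{k_1}\subseteq\aa_{k_1}\subseteq I$.

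Part $(3)$ is where the real work lies, and I would take $h=h_1+h_2+1$, $c=c_1+c_2$. Writing $N=(h+m)p^e+c$, the idea is to treat each block $\aa_j\bb_l$ (with $j+l=N$) of $\ee_N$ separately: from $\aa_j$ one can extract a Frobenius power $\aa_i^{[p^e]}$ for every $i$ up to $i(j):=$ the largest $i\ge 1$ with $j\ge(h_1+i-1)p^e+c_1$ (take $i(j)=0$ if none; for $i=0$ the statement is just $\aa_j\subseteq R$), and symmetrically from $\bb_l$ up to some $i'(l)$. A routine floor bound gives $i(j)\ge(j-c_1-h_1p^e)/p^e$ and $i'(l)\ge(l-c_2-h_2p^e)/p^e$, so adding and substituting $N=(h_1+h_2+1+m)p^e+(c_1+c_2)$ shows $i(j)+i'(l)\ge m+1$. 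One then picks integers $i\le i(j)$, $i'\le i'(l)$ with $i+i'=m+1$ (e.g. $i=\min\{i(j),m+1\}$, $i'=m+1-i$), so that $\aa_j\bb_l\subseteq\aa_i^{[p^e]}\bb_{i'}^{[p^e]}=(\aa_i\bb_{i'})^{[p^e]}\subseteq\ee_{m+1}^{[p^e]}$, the last inclusion because $\aa_i\bb_{(m+1)-i}$ is one of the summands of $\ee_{m+1}$. Summing over blocks gives $\ee_N\subseteq\ee_{m+1}^{[p^e]}$. Condition $(1)$ holds with $k=k_1+k_2$: in each summand $\aa_j\bb_{k_1+k_2-j}$, either $j\ge k_1$ so $\aa_j\subseteq\aa_{k_1}\subseteq I$, or $j<k_1$ so $k_1+k_2-j>k_2$ and $\bb_{k_1+k_2-j}\subseteq\bb_{k_2}\subseteq I$.

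The main obstacle I anticipate is precisely the index bookkeeping in part $(3)$: one must split each $\aa_j\bb_l$ so that the exponents pulled out of $\aa_j$ and $\bb_l$ sum to exactly $m+1$, which is what forces $h=h_1+h_2+1$ and makes the inequality $i(j)+i'(l)\ge m+1$ the heart of the argument; the option to pull out a trivial ($i=0$ or $i'=0$) factor, which genuinely uses $\aa_0=\bb_0=R$, is what covers the blocks with $j$ or $l$ small. Parts $(1)$ and $(2)$ should be essentially formal once one observes that a single pair $(h,c)$ dominates both filtrations' admissibility data, the only non-formal ingredient being the Frobenius-flatness identity $(J\cap K)^{[q]}=J^{[q]}\cap K^{[q]}$ invoked in the regular case.
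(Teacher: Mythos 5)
Your proof is correct and follows essentially the same route as the paper's: parts (1)--(2) via a single dominating pair $(\max\{h_1,h_2\},\max\{c_1,c_2\})$ together with flatness of Frobenius for the intersection, and part (3) by pulling Frobenius powers out of both factors of each summand $\aa_j\bb_l$ (trivially when an index is small, using $\aa_0=\bb_0=R$) so that the extracted indices sum to $m+1$. The only difference is cosmetic bookkeeping in (3): the paper partitions the summation index into intervals of length $p^e$ and obtains the constants $(h_1+h_2,\,c_1+c_2)$, whereas your maximal-extraction/floor argument gives $(h_1+h_2+1,\,c_1+c_2)$, which is immaterial since the definition only asks for the existence of some such pair.
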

\begin{proof}
Since $\aa_\bullet$ and $\bb_\bullet$ are $(I,p)$-admissible filtrations, there exist  integers $k,k',h,h', c,c'$ such that $\mathfrak{a}_{k}\subset I$, $\bb_{k'} \subseteq I$, and for all $m, e\geq 0$, {$$\mathfrak{a}_{(h+m)p^e+c}\subseteq \mathfrak{a}_{m+1}^{[p^e]} \text{ and } \mathfrak{b}_{(h'+m)p^e+c'}\subseteq \mathfrak{b}_{m+1}^{[p^e]}.$$}
\par (1-2) Note that $\cc_k \subseteq \dd_k \subseteq \aa_k \subseteq I$. Let $l=\max\{h,h'\}, n=\max\{c,c'\}.$ Then, for all $m,e \ge 0$, $$\mathfrak{c}_{(l+m)p^e+n}= \mathfrak{a}_{(l+m)p^e+n}\mathfrak{b }_{(l+m)p^e+n}\subseteq \mathfrak{a}_{(h+m)p^e+c}\mathfrak{b}_{(h'+m)p^e+c'}\subseteq \mathfrak{a}_{m+1}^{[p^e]}\mathfrak{b}_{m+1}^{[p^e]}=\cc_{m+1}^{[p^e]}$$ and $$\mathfrak{d}_{(l+m)p^e+n}= \mathfrak{a}_{(l+m)p^e+n}\cap \mathfrak{b }_{(l+m)p^e+n}\subseteq \mathfrak{a}_{(h+m)p^e+c}\cap \mathfrak{b}_{(h'+m)p^e+c'}\subseteq \mathfrak{a}_{m+1}^{[p^e]}\cap \mathfrak{b}_{m+1}^{[p^e]}=\dd_{m+1}^{[p^e]},$$  where the last equality follows from the flatness of  the Frobenius map on $R$. Thus,  $\cc_\bullet$ and $\dd_\bullet$ are $(I,p)$-admissible filtrations.
\par (3) Let $k''= k+k'$. Then, for every $ 0 \leq i \leq k''$, either $i \ge k$ or $k''-i \ge k'$. This implies that either $\aa_i \subseteq \aa_k \subseteq I$ or $\bb_{k''-i} \subseteq \bb_{k'} \subseteq I$ which further implies that $\dd_{k''} \subseteq I.$ Next, let $l=h+h', n=c+c' $ and  $m,e \geq 0$ be any. Consider, $$\dd_{(l+m)p^e+n} =\sum_{i=0}^{(l+m)p^e+n} \aa_i \bb_{(l+m)p^e+n-i} .$$ Suppose that $0 \leq i <hp^e+c$. Then, {$(l+m)p^e+n-i \geq (h'+m)p^e+c'$ which implies that $\aa_i \bb_{(l+m)p^e+n-i} \subseteq \bb_{(h'+m)p^e+c'} \subseteq \bb_{m+1}^{[p^e]}.$} Now, for  $0 \le j < m$,  if $ (h+j)p^e+c \leq i < (h+j+1)p^e+c$, then $(l+m)p^e+n-i \geq (h'+m-j-1)p^e+c'$. Therefore, $\aa_i \subseteq \aa_{(h+j)p^e+c} \subseteq \aa_{j+1}^{[p^e]}$ and $\bb_{(l+m)p^e+n-i} \subseteq  \bb_{(h'+m-j-1)p^e+c'} \subseteq \bb_{m-j}^{[p^e]}$ which implies that $\aa_i \bb_{(l+m)p^e+n-i}\subseteq \aa_{j+1}^{[p^e]}\bb_{m-j}^{[p^e]}=(\aa_{j+1}\bb_{m-j})^{[p^e]}.$ Now, for $i\ge (h+m)p^e+c$, we have $\aa_i \subseteq \aa_{(h+m)p^e+c} \subseteq \aa_{m+1}^{[p^e]}.$ Therefore, $$\dd_{(l+m)p^e+n} =\sum_{i=0}^{(l+m)p^e+n} \aa_i \bb_{(l+m)p^e+n-i} \subseteq \sum_{j=0}^{m+1} (\aa_j\bb_{m+1-j})^{[p^e]}=\dd_{m+1}^{[p^e]} .$$ 
Thus,  $\dd_\bullet$ is an $(I,p)$-admissible filtration.
\end{proof}

 Now, we prove that the $F$-threshold of an $(I,p)$-admissible filtration exists. The strategy of the proof is the same as the proof of \cite[Theorem 3.4]{DNP18}.
  \begin{theorem}
  \label{F-threshold-*-filtration}
   Let $\mathfrak{a}_{\bullet}$ be an $(I,p)$-admissible filtration. Then
   $\C^{I}(\aa_\bullet)=\lim\limits_{e\to\infty}\frac{\nu^I_{\mathfrak{a}_{\bullet}}(p^e)}{p^e}$ exists. 
  \end{theorem}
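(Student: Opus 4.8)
The plan is to adapt the argument of De Stefani--N\'u\~nez Betancourt--P\'erez \cite[Theorem 3.4]{DNP18} to the setting of an $(I,p)$-admissible filtration $\aa_\bullet$. First I would record the two immediate consequences of admissibility. Condition $(1)$ gives $\aa_k\subseteq I$ for some $k$, so $\aa_{kp^e}\subseteq I\subseteq \sqrt{I}$ and in particular $\aa_r\subseteq I^{[p^e]}$ once $r$ is a suitable multiple of $p^e$; combined with the subadditivity $\aa_i\aa_j\subseteq\aa_{i+j}$ this forces $\nu^I_{\aa_\bullet}(p^e)<\infty$ for every $e$ (indeed a linear-in-$p^e$ bound holds by the same Pigeonhole argument as in \Cref{suff}), so that the sequence $\{\nu^I_{\aa_\bullet}(p^e)/p^e\}$ is a bounded sequence of non-negative reals and $\mathcal{C}_+^I(\aa_\bullet)<\infty$. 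Hence it only remains to show $\mathcal{C}_-^I(\aa_\bullet)\ge \mathcal{C}_+^I(\aa_\bullet)$, i.e. the $\limsup$ and $\liminf$ agree.

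The heart of the proof is condition $(2)$: there are integers $h>0$ and $c$ with $\aa_{(h+m)p^e+c}\subseteq \aa_{m+1}^{[p^e]}$ for all $m,e\ge 0$. The idea is to turn this into a near-submultiplicativity statement for $\nu:=\nu^I_{\aa_\bullet}$ under $e\mapsto e+1$ (and more generally $e\mapsto e+e'$). Set $\nu_e:=\nu^I_{\aa_\bullet}(p^e)$. If $\aa_r\not\subseteq I^{[p^{e+e'}]}$, then $r$ must be small relative to $\nu_{e}$ and $\nu_{e'}$: writing the containment $I^{[p^{e+e'}]}=(I^{[p^{e'}]})^{[p^e]}$ and using $(2)$ with the exponent $p^e$ to pull $\aa_r$ down to a Frobenius power $\aa_{m+1}^{[p^e]}$, and then the fact that $\aa_{m+1}\not\subseteq I^{[p^{e'}]}$ is impossible once $m+1>\nu_{e'}$, one deduces an inequality of the shape
\[
\nu_{e+e'}\ \le\ p^{e}\,(\nu_{e'}+h)\ +\ (\text{bounded correction in terms of }h,c).
\]
Dividing by $p^{e+e'}$ and taking appropriate limits gives, for each fixed $e'$,
\[
\mathcal{C}_+^I(\aa_\bullet)=\limsup_{e\to\infty}\frac{\nu_{e+e'}}{p^{e+e'}}\ \le\ \frac{\nu_{e'}+h}{p^{e'}}.
\]
Now taking $\liminf$ over $e'$ on the right-hand side yields $\mathcal{C}_+^I(\aa_\bullet)\le \mathcal{C}_-^I(\aa_\bullet)$, and since the reverse inequality is automatic the limit exists; it lands in $\mathbb{R}_{\ge 0}$ by the boundedness established in the first step, so $\mathcal{C}^I(\aa_\bullet)$ exists in the sense of the definition.

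The main obstacle is getting the combinatorial bookkeeping in the key inequality exactly right: one has to match the constants $h$ and $c$ from condition $(2)$ against the index shifts coming from $I^{[p^{e+e'}]}=(I^{[p^{e'}]})^{[p^e]}$ and from the filtration inclusions $\aa_{i+1}\subseteq\aa_i$, and to verify that the correction term really is $O(1)$ (or at worst $o(p^{e+e'})$) uniformly, so that it disappears after dividing by $p^{e+e'}$. A secondary point worth checking is the edge behaviour when $c<0$ or when $m=0$, so that all indices appearing remain non-negative; this is handled exactly as in \cite{DNP18} by only invoking $(2)$ for $e$ large enough that $(h+m)p^e+c\ge 0$, which does not affect the limit. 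Once the inequality $\nu_{e+e'}\le p^e(\nu_{e'}+h)+O(1)$ is in hand, the passage to limits is the same $\limsup/\liminf$ squeeze used repeatedly in \Cref{basic-prop} and \Cref{noe-fil}.
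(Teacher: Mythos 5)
Your main argument is exactly the paper's proof: fixing $m=\nu^I_{\aa_\bullet}(p^{e'})$, condition $(2)$ with exponent $p^{e}$ gives $\aa_{(h+m)p^{e}+c}\subseteq\aa_{m+1}^{[p^{e}]}\subseteq\bigl(I^{[p^{e'}]}\bigr)^{[p^{e}]}=I^{[p^{e+e'}]}$, hence $\nu^I_{\aa_\bullet}(p^{e+e'})< p^{e}\bigl(\nu^I_{\aa_\bullet}(p^{e'})+h\bigr)+c$, and the $\limsup$/$\liminf$ squeeze then yields $\mathcal{C}^I_+(\aa_\bullet)\le\mathcal{C}^I_-(\aa_\bullet)$; this is the argument in the paper up to swapping the roles of $e$ and $e'$, and it is correct as you outline it.

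The flaw is in your first step. You claim that condition $(1)$ alone, i.e.\ $\aa_k\subseteq I$, together with the filtration axioms and the Pigeonhole argument of \Cref{suff}, already forces $\nu^I_{\aa_\bullet}(p^e)<\infty$ with a linear-in-$p^e$ bound. This inference is invalid: the filtration axiom gives $\aa_k^{\,s}\subseteq\aa_{ks}$, not $\aa_{ks}\subseteq\aa_k^{\,s}$, so $\aa_k\subseteq I$ produces no containment of the form $\aa_{Ns}\subseteq I^{s}$, which is the hypothesis that \Cref{suff} actually requires. The paper's example following \Cref{exis-nec} (with $R=S[x]$ and $\aa_n=Ix^n$) makes this concrete: there $\aa_1\subseteq IR$, yet $\aa_r\not\subseteq (IR)^{[p^e]}$ for all $r,e\ge 1$, so $\nu^{IR}_{\aa_\bullet}(p^e)=\infty$ for every $e$; any argument deriving finiteness from condition $(1)$ plus subadditivity alone would apply to that filtration and is therefore wrong. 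The correct route, and the one the paper takes, is to use condition $(2)$ for the boundedness as well: taking $m=k-1$ gives $\aa_{(h+k-1)p^e+c}\subseteq\aa_k^{[p^e]}\subseteq I^{[p^e]}$, hence $\nu^I_{\aa_\bullet}(p^e)\le (h+k-1)p^e+c-1$. Equivalently, condition $(1)$ does give $\nu^I_{\aa_\bullet}(p^0)\le k-1$, and your own key inequality specialized to $e'=0$ then bounds $\nu^I_{\aa_\bullet}(p^{e})$ linearly in $p^{e}$. With this repair (which also removes any appeal to $\sqrt I$), the rest of your proposal goes through verbatim.
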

  \begin{proof}
  Since $\aa_\bullet$ is an $(I,p)$-admissible filtration, {there exist integers $k, c$ and $h>0$} such that $\aa_k \subseteq I$ and $\mathfrak{a}_{(h+m)p^e+c}\subseteq \mathfrak{a}_{m+1}^{[p^e]} \text{ for all } m, e\geq 0.$ Therefore, $\mathfrak{a}_{(h+k-1)p^e+c}\subseteq \mathfrak{a}_{k}^{[p^e]} \subseteq I^{[p^e]} $ which implies that {$0\leq {\nu^I_{\mathfrak{a}_{\bullet}}(p^e)} \le (h+k-1)p^e+c-1 \text{ for all }  e\geq 0.$} Thus for all large $q=p^e\gg 0$, $0\leq \frac{\nu^I_{\mathfrak{a}_{\bullet}}(p^e)}{p^e}\leq h+k$, i.e. $\{\frac{\nu^I_{\mathfrak{a}_{\bullet}}(p^{e})}{p^{e}}\}$ is an eventually bounded sequence. 
  Now, it suffices to prove that
$$\limsup\limits_{e\to\infty}\frac{\nu^I_{\mathfrak{a}_{\bullet}}(p^{e})}{p^{e}}\leq \liminf\limits_{e\to\infty}\frac{\nu^I_{\mathfrak{a}_{\bullet}}(p^{e})}{p^{e}}.$$ Let $e, e' \ge 0$ and $m=\nu^I_{\mathfrak{a}_{\bullet}}(p^{e})$. Then
 $$\mathfrak{a}_{(h+m)p^{e'}+c}\subseteq \mathfrak{a}_{m+1}^{[p^{e'}]}\subseteq {(I^{[p^{e}]})}^{[p^{e'}]}=I^{[p^{e+e'}]}.$$
 Therefore, $\nu^I_{\mathfrak{a}_{\bullet}}(p^{e+e'})< (h+m)p^{e'}+c$ which implies that
 $$\frac{\nu^I_{\mathfrak{a}_{\bullet}}(p^{e+e'})}{p^{e+e'}}<\frac{h}{p^{e}}+ \frac{\nu^I_{\mathfrak{a}_{\bullet}}(p^{e})}{p^{e}}+\frac{c}{p^{e+e'}}.$$
 Now, for each $e\ge 1$,  we get 
 $$\limsup\limits_{e'\to\infty}\frac{\nu^I_{\mathfrak{a}_{\bullet}}(p^{e+e'})}{p^{e+e'}}\leq \frac{h}{p^{e}}+ \frac{\nu^I_{\mathfrak{a}_{\bullet}}(p^{e})}{p^{e}}.$$ Therefore, 
 $$\C_+^I(\aa_\bullet) =\limsup\limits_{e'\to\infty}\frac{\nu^I_{\mathfrak{a}_{\bullet}}(p^{e+e'})}{p^{e+e'}}\leq \frac{h}{p^{e}}+ \frac{\nu^I_{\mathfrak{a}_{\bullet}}(p^{e})}{p^{e}}.$$ Now, applying $\liminf$ as $e \to \infty$, we get 
 $$\C_+^I(\aa_\bullet)\le \liminf\limits_{e\to \infty} \left(\frac{h}{p^{e}}+ \frac{\nu^I_{\mathfrak{a}_{\bullet}}(p^{e})}{p^{e}}\right)=\liminf\limits_{e\to \infty} \frac{\nu^I_{\mathfrak{a}_{\bullet}}(p^{e})}{p^{e}}$$ which implies that
 $\C_+^I(\aa_\bullet) \le \C_-^I(\aa_\bullet).$ Hence, $\C^I(\aa_\bullet)$ exists.
  \end{proof}

Now, we study $F$-thresholds of filtrations $\cc_\bullet, \dd_\bullet$ and $\ee_\bullet$ in terms of $F$-thresholds of filtrations $\aa_\bullet$ and $\bb_\bullet$.

\begin{proposition}\label{new_fil}
Let $\aa_\bullet$ and $\bb_\bullet$ be two filtrations. Let $\cc_\bullet= \{ \aa_n \bb_n\}_{n\geq 0}$, $\dd_\bullet =  \{ \aa_n \cap \bb_n\}_{n\geq 0}$ and $\ee_\bullet= \{ \sum_{i=0}^n\aa_i\bb_{n-i}\}_{n\geq 0}$. Then, 
$$\mathcal{C}_{\pm}^I(\cc_\bullet) \le \mathcal{C}_{\pm}^I(\dd_\bullet) \le \mathcal{C}_{\pm}^I(\aa_\bullet), \mathcal{C}_{\pm}^I(\bb_\bullet) \le \mathcal{C}_{\pm}^I(\ee_\bullet) \text{ and }  \mathcal{C}_{+}^I(\ee_\bullet) \le \mathcal{C}_{+}^I(\aa_\bullet)+ \mathcal{C}_{+}^I(\bb_\bullet) .$$ \end{proposition}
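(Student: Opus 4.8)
The plan is to derive all of the displayed inequalities from straightforward containments between the relevant ideals, level by level, together with the elementary behavior of $\limsup$ and $\liminf$ under $\sup$/sum operations. The key observation throughout is that whenever $\mathfrak{u}_\bullet \le \mathfrak{v}_\bullet$ (termwise containment), one has $\nu^I_{\mathfrak{u}_\bullet}(p^e) \le \nu^I_{\mathfrak{v}_\bullet}(p^e)$ for every $e$, and hence $\mathcal{C}^I_{\pm}(\mathfrak{u}_\bullet) \le \mathcal{C}^I_{\pm}(\mathfrak{v}_\bullet)$ — this is exactly the first assertion of \Cref{fin-fil}. So the bulk of the work is simply to record the right containments.

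First I would handle the chain $\mathcal{C}_{\pm}^I(\cc_\bullet) \le \mathcal{C}_{\pm}^I(\dd_\bullet) \le \mathcal{C}_{\pm}^I(\aa_\bullet), \mathcal{C}_{\pm}^I(\bb_\bullet) \le \mathcal{C}_{\pm}^I(\ee_\bullet)$. For each $n$ we have the termwise containments $\aa_n\bb_n \subseteq \aa_n \cap \bb_n \subseteq \aa_n$ and $\aa_n \cap \bb_n \subseteq \bb_n$, i.e. $\cc_\bullet \le \dd_\bullet \le \aa_\bullet$ and $\dd_\bullet \le \bb_\bullet$; applying \Cref{fin-fil} gives the first three inequalities. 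For the comparison with $\ee_\bullet$, note that taking $i=0$ (using $\aa_0 = R$) shows $\bb_n \subseteq \sum_{i=0}^n \aa_i \bb_{n-i} = \ee_n$, and taking $i=n$ shows $\aa_n \subseteq \ee_n$; thus $\aa_\bullet \le \ee_\bullet$ and $\bb_\bullet \le \ee_\bullet$, and \Cref{fin-fil} finishes this part.

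The one genuinely substantive point is the last inequality, $\mathcal{C}_{+}^I(\ee_\bullet) \le \mathcal{C}_{+}^I(\aa_\bullet)+ \mathcal{C}_{+}^I(\bb_\bullet)$, which I expect to be the main obstacle since it is not merely a containment argument. The idea is to bound $\nu^I_{\ee_\bullet}(p^e)$ in terms of $\nu^I_{\aa_\bullet}(p^e)$ and $\nu^I_{\bb_\bullet}(p^e)$. Set $s = \nu^I_{\aa_\bullet}(p^e)$ and $t = \nu^I_{\bb_\bullet}(p^e)$, so $\aa_{s+1} \subseteq I^{[p^e]}$ and $\bb_{t+1} \subseteq I^{[p^e]}$. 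I claim $\ee_n \subseteq I^{[p^e]}$ once $n \ge s+t+2$: indeed, for any $i$ with $0 \le i \le n$, in the summand $\aa_i \bb_{n-i}$ either $i \ge s+1$, forcing $\aa_i \subseteq \aa_{s+1} \subseteq I^{[p^e]}$, or $i \le s$, whence $n - i \ge n - s \ge t+2 > t+1$, forcing $\bb_{n-i} \subseteq \bb_{t+1} \subseteq I^{[p^e]}$; either way $\aa_i\bb_{n-i} \subseteq I^{[p^e]}$, so $\ee_n \subseteq I^{[p^e]}$. Hence $\nu^I_{\ee_\bullet}(p^e) \le s + t + 1 = \nu^I_{\aa_\bullet}(p^e) + \nu^I_{\bb_\bullet}(p^e) + 1$. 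Dividing by $p^e$ and using the general fact that $\limsup$ of a sum is at most the sum of the $\limsup$'s (and that adding $1/p^e \to 0$ does not change the $\limsup$) yields $\mathcal{C}_{+}^I(\ee_\bullet) \le \mathcal{C}_{+}^I(\aa_\bullet) + \mathcal{C}_{+}^I(\bb_\bullet)$, as desired. One should also dispatch the degenerate cases where some $\nu$ is $+\infty$ or a $\mathcal{C}_+$ is infinite — but then the right-hand side is $+\infty$ and the inequality is vacuous, so no extra care is needed.
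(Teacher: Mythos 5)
Your proposal is correct and follows essentially the same route as the paper: termwise containments $\cc_\bullet \le \dd_\bullet \le \aa_\bullet, \bb_\bullet \le \ee_\bullet$ combined with \Cref{fin-fil} for the chain of inequalities, and a case split on the index $i$ in each summand $\aa_i\bb_{n-i}$ to bound $\nu^I_{\ee_\bullet}(p^e)$ by (essentially) $\nu^I_{\aa_\bullet}(p^e)+\nu^I_{\bb_\bullet}(p^e)$, followed by the subadditivity of $\limsup$. The only cosmetic difference is that your bound carries an extra $+1$, which, as you note, vanishes after dividing by $p^e$.
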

\begin{proof}
First, note that  $\cc_n \subseteq \dd_n \subseteq \aa_n, \bb_n \subseteq \ee_n$ for all $n$.  Therefore, $\cc_\bullet \le \dd_\bullet \le \aa_\bullet, \bb_\bullet \le \ee_\bullet$. Now, applying Theorem \ref{fin-fil}, we get $\mathcal{C}_{\pm}^I(\cc_\bullet) \le \mathcal{C}_{\pm}^I(\dd_\bullet) \le \mathcal{C}_{\pm}^I(\aa_\bullet), \mathcal{C}_{\pm}^I(\bb_\bullet) \le \mathcal{C}_{\pm}^I(\ee_\bullet)$. So, it remains to prove that  $\mathcal{C}_{+}^I(\ee_\bullet) \le \mathcal{C}_{+}^I(\aa_\bullet)+ \mathcal{C}_{+}^I(\bb_\bullet) .$ 
We claim that $\nu_{\ee_\bullet}^I(p^e) \le \nu_{\aa_\bullet}^I(p^e)+ \nu_{\bb_\bullet}^I(p^e)$ for all $e \ge 0$.   If $\nu_{\aa_\bullet}^I(p^e)+ \nu_{\bb_\bullet}^I(p^e) = \infty$, then vacuously the claim follows. So, we  assume that $\nu_{\aa_\bullet}^I(p^e)+ \nu_{\bb_\bullet}^I(p^e) < \infty.$ Let $k > \nu_{\aa_\bullet}^I(p^e)+ \nu_{\bb_\bullet}^I(p^e).$ Then for every $0 \le i \le k$, either $i>\nu_{\aa_\bullet}^I(p^e)$ or $k-i > \nu_{\bb_\bullet}^I(p^e).$ Therefore, either $\aa_i \subseteq I^{[p^e]} $ or $\bb_{k-i} \subseteq I^{[p^e]}$ which implies that $\aa_i\bb_{k-i} \subseteq I^{[p^e]}$ for every $0 \le i \le k$. Consequently,  $\ee_k \subseteq I^{[p^e]}.$ Thus, the claim follows.  Using claim, we get 
$$\frac{\nu_{{\mathfrak{e}_{\bullet}}}^I (p^e)}{p^e} \leq \frac{\nu_{{\mathfrak{a}_{\bullet}}}^I (p^e)}{p^e} + \frac{\nu_{{\mathfrak{b}_{\bullet}}}^I (p^e)}{p^e}$$ for all $e \geq 0$. Therefore, $$\limsup_{e \to \infty} \frac{\nu_{{\mathfrak{e}_{\bullet}}}^I (p^e)}{p^e} \leq \limsup_{e \to \infty}\left( \frac{\nu_{{\mathfrak{a}_{\bullet}}}^I (p^e)}{p^e} + \frac{\nu_{{\mathfrak{b}_{\bullet}}}^I (p^e)}{p^e}\right) \le \limsup_{e \to \infty} \frac{\nu_{{\mathfrak{a}_{\bullet}}}^I (p^e)}{p^e} + \limsup_{e \to \infty} \frac{\nu_{{\mathfrak{b}_{\bullet}}}^I (p^e)}{p^e}. $$ Hence, the assertion follows.
\end{proof}

As an immediate consequence of \Cref{filwith}, \Cref{F-threshold-*-filtration} and \Cref{new_fil}, we get the following: 

\begin{corollary}
Let $\aa_\bullet$ and $\bb_\bullet$ be two $(I,p)$-admissible filtrations. Let $\cc_\bullet= \{ \aa_n \bb_n\}_{n\geq 0}$, $\dd_\bullet =  \{ \aa_n \cap \bb_n\}_{n\geq 0}$ and $\ee_\bullet= \{ \sum_{i=0}^n\aa_i\bb_{n-i}\}_{n\geq 0}$.  Then, 
\begin{enumerate}
\item  ${\mathcal{C}}^I(\cc_\bullet) \le \min \left\{ {\mathcal{C}}^I(\aa_\bullet) , {\mathcal{C}}^I(\bb_\bullet)  \right\}.$
\item Assume that $R$ is regular, then ${\mathcal{C}}^I(\cc_\bullet) \le \C^{I}(\dd_\bullet) \le \min \left\{ {\mathcal{C}}^I(\aa_\bullet) , {\mathcal{C}}^I(\bb_\bullet)  \right\}.$
\item   $\max \left\{ {\mathcal{C}}^I(\aa_\bullet) , {\mathcal{C}}^I(\bb_\bullet)  \right\} \le {\mathcal{C}}^I(\ee_\bullet) \le  {\mathcal{C}}^I(\aa_\bullet) +{\mathcal{C}}^I(\bb_\bullet) .$
\end{enumerate}
\end{corollary}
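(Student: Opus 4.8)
The plan is to deduce this corollary mechanically from the three results it cites, the only real content being a careful accounting of which $F$-thresholds are known to exist. First I would observe that, since $\aa_\bullet$ and $\bb_\bullet$ are $(I,p)$-admissible, \Cref{filwith}(1) makes $\cc_\bullet$ an $(I,p)$-admissible filtration, \Cref{filwith}(3) makes $\ee_\bullet$ one, and --- when $R$ is regular --- \Cref{filwith}(2) makes $\dd_\bullet$ one as well. Then \Cref{F-threshold-*-filtration}, applied to each of $\aa_\bullet$, $\bb_\bullet$, $\cc_\bullet$, $\ee_\bullet$ (and to $\dd_\bullet$ in the regular case), shows that for each of these filtrations the limit defining the $F$-threshold exists, so $\mathcal{C}_+^I=\mathcal{C}_-^I=\mathcal{C}^I\in\mathbb{R}_{\ge 0}$ for each of them.

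With existence in hand, I would invoke \Cref{new_fil}. Its inequality
\[
\mathcal{C}_{\pm}^I(\cc_\bullet) \le \mathcal{C}_{\pm}^I(\dd_\bullet) \le \mathcal{C}_{\pm}^I(\aa_\bullet),\ \mathcal{C}_{\pm}^I(\bb_\bullet) \le \mathcal{C}_{\pm}^I(\ee_\bullet)
\]
becomes, once every $\pm$ is replaced by the genuine threshold, the chain $\mathcal{C}^I(\cc_\bullet)\le\mathcal{C}^I(\dd_\bullet)\le\min\{\mathcal{C}^I(\aa_\bullet),\mathcal{C}^I(\bb_\bullet)\}$ together with $\max\{\mathcal{C}^I(\aa_\bullet),\mathcal{C}^I(\bb_\bullet)\}\le\mathcal{C}^I(\ee_\bullet)$. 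Dropping the $\dd_\bullet$ term (whose presence required regularity) gives (1); retaining it gives (2); and the first half of (3) is precisely the $\max$ inequality just extracted. For the second half of (3), I would use the last inequality of \Cref{new_fil}, namely $\mathcal{C}_+^I(\ee_\bullet)\le\mathcal{C}_+^I(\aa_\bullet)+\mathcal{C}_+^I(\bb_\bullet)$, which after the same collapse reads $\mathcal{C}^I(\ee_\bullet)\le\mathcal{C}^I(\aa_\bullet)+\mathcal{C}^I(\bb_\bullet)$.

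There is no serious obstacle here; the proof is essentially two lines of citation glue. The one point I would take care to state explicitly is that one is not entitled to identify $\mathcal{C}_+^I$ with $\mathcal{C}_-^I$ inside \Cref{new_fil} until existence has been established via \Cref{filwith} and \Cref{F-threshold-*-filtration}, and that for $\dd_\bullet$ this existence --- hence statement (2) --- genuinely needs the regularity hypothesis, both to apply \Cref{filwith}(2) and, within its proof, to use flatness of the Frobenius map. Everything else follows from the monotonicity of $\limsup$ and $\liminf$ already packaged into the cited statements.
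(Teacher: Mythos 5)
Your proposal is correct and is exactly the paper's intended argument: the paper states this corollary as an immediate consequence of \Cref{filwith}, \Cref{F-threshold-*-filtration} and \Cref{new_fil}, and your write-up simply makes the citation glue explicit, including the correct observation that regularity is only needed in (2) to guarantee (via \Cref{filwith}(2)) that $\dd_\bullet$ is $(I,p)$-admissible so its $F$-threshold exists. No gaps.
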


 \begin{proposition}\label{new-fil-fil}
Let  $\aa_\bullet$ be filtration and let $\bb_\bullet= \{ \aa_i^s\}_{i\geq 0}$. Then,  $s\;{\mathcal{C}}^I_{\pm}(\bb_\bullet) \le {\mathcal{C}}^I_{\pm}(\aa_\bullet) .$ Moreover, if $\aa_\bullet =\aa^\bullet$ for some non-zero proper ideal $\aa \subseteq \sqrt I,$ then $\C^I(\aa^\bullet)=s\; \C^I((\aa^s)^\bullet).$
\end{proposition}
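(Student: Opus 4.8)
The plan is to relate $\nu^I_{\bb_\bullet}(p^e)$ and $\nu^I_{\aa_\bullet}(p^e)$ directly for each $e$, and then pass to the limsup/liminf. First I would unwind the definitions: $\nu^I_{\bb_\bullet}(p^e) = \sup\{r : \bb_r \not\subseteq I^{[p^e]}\} = \sup\{r : \aa_r^s \not\subseteq I^{[p^e]}\}$. The key observation is that if $\aa_r \subseteq I^{[p^e]}$ then a fortiori $\aa_r^s \subseteq I^{[p^e]}$ (since $s \ge 1$); contrapositively, $\aa_r^s \not\subseteq I^{[p^e]}$ forces $\aa_r \not\subseteq I^{[p^e]}$. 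Hence $\{r : \bb_r \not\subseteq I^{[p^e]}\} \subseteq \{r : \aa_r \not\subseteq I^{[p^e]}\}$, which gives $\nu^I_{\bb_\bullet}(p^e) \le \nu^I_{\aa_\bullet}(p^e)$ for every $e$. That already yields $\mathcal{C}^I_{\pm}(\bb_\bullet) \le \mathcal{C}^I_{\pm}(\aa_\bullet)$, but it is not sharp enough; I need the factor of $s$. For the reverse-type bound, I would argue that if $\aa_r \not\subseteq I^{[p^e]}$, say with $x \in \aa_r \setminus I^{[p^e]}$, then I want to produce an element of some $\aa_{r'}^s$ outside $I^{[p^e]}$ with $r' \approx r/s$. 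Concretely, set $r' = \lceil r/s \rceil$ (or a nearby index), and use that $\aa_r \subseteq \aa_{sr'}$ whenever $sr' \ge r$ — wait, that containment goes the wrong way — so instead I should use $\aa_{r'}^{s} \supseteq$ something. The cleaner route: from $p\,\nu$-type multiplicativity is not available for general filtrations, so I would instead bound $\nu^I_{\bb_\bullet}(p^e)$ from below by $\frac{1}{s}(\nu^I_{\aa_\bullet}(p^e) - s + 1)$ or similar, using that $\aa_{\nu}^s \subseteq \aa_{s\nu}$ fails — let me reconsider.

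The correct inequality to chase is: for each $e$,
\begin{equation}
s\,\nu^I_{\bb_\bullet}(p^e) \le \nu^I_{\aa_\bullet}(p^e) \le s\big(\nu^I_{\bb_\bullet}(p^e)+1\big) - 1 ?
\end{equation}
The left inequality I would get from: if $\bb_r = \aa_r^s \not\subseteq I^{[p^e]}$, then $\aa_r \not\subseteq I^{[p^e]}$ and in fact $\aa_r^s \subseteq \aa_{sr}$ by the filtration property (1), so $\aa_{sr} \not\subseteq I^{[p^e]}$, giving $sr \le \nu^I_{\aa_\bullet}(p^e)$; taking the sup over such $r$ yields $s\,\nu^I_{\bb_\bullet}(p^e) \le \nu^I_{\aa_\bullet}(p^e)$. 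Dividing by $p^e$ and taking limsup/liminf gives $s\,\mathcal{C}^I_{\pm}(\bb_\bullet) \le \mathcal{C}^I_{\pm}(\aa_\bullet)$, which is exactly the first claim. Note this direction needs only $\aa_r^s\subseteq\aa_{rs}$, an axiom of filtrations, so there is no real obstacle here.

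For the "moreover" part, take $\aa_\bullet = \aa^\bullet$, so $\bb_\bullet = (\aa^s)^\bullet$ has $\bb_i = \aa^{si}$. Now $\aa$-adic filtrations are $(I,p)$-admissible (this is recorded in Example~4.3(a), since $\aa \subseteq \sqrt I$), hence so is $(\aa^s)^\bullet$, and both $F$-thresholds exist by Theorem~\ref{F-threshold-*-filtration} (alternatively by \cite[Theorem 3.4]{DNP18}). The first part already gives $s\,\mathcal{C}^I((\aa^s)^\bullet) \le \mathcal{C}^I(\aa^\bullet)$. For the reverse inequality I would use the cleaner index comparison available in the adic case: $\nu^I_{\aa^\bullet}(p^e) < s\big(\nu^I_{(\aa^s)^\bullet}(p^e)+1\big)$, because if $r \ge s(\nu^I_{(\aa^s)^\bullet}(p^e)+1)$ then writing $r = sq + t$ with $q \ge \nu^I_{(\aa^s)^\bullet}(p^e)+1$ and $0\le t<s$ gives $\aa^r \subseteq \aa^{sq} = (\aa^s)^q \subseteq I^{[p^e]}$. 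Dividing by $p^e$ and taking limits, $\mathcal{C}^I(\aa^\bullet) \le s\,\mathcal{C}^I((\aa^s)^\bullet)$. Combining the two inequalities yields $\mathcal{C}^I(\aa^\bullet) = s\,\mathcal{C}^I((\aa^s)^\bullet)$. The main (minor) obstacle is bookkeeping the floor/ceiling and the "$+1$" slack so that it washes out in the limit — everything else is a direct application of the filtration axioms and the already-established existence results.
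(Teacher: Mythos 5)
Your argument is correct and is essentially the paper's: the first inequality is obtained there exactly as you do it, namely from the filtration axiom $\bb_r=\aa_r^s\subseteq\aa_{rs}$, so that $\bb_r\not\subseteq I^{[p^e]}$ forces $\aa_{rs}\not\subseteq I^{[p^e]}$, giving $s\,\nu^I_{\bb_\bullet}(p^e)\le\nu^I_{\aa_\bullet}(p^e)$ for every $e$ and then the claim after passing to $\limsup$ and $\liminf$. For the ``moreover'' statement the paper merely cites \cite[Proposition 2.2 (iii)]{HMTW}, whereas you prove it directly via the Euclidean-division comparison $\nu^I_{\aa^\bullet}(p^e)\le s\bigl(\nu^I_{(\aa^s)^\bullet}(p^e)+1\bigr)-1$ (legitimate here because $\aa^s\subseteq\sqrt I$ makes these $\nu$'s finite, and the slack $s/p^e$ vanishes in the limit); this is precisely the standard argument underlying that citation, so your proof agrees in substance and is simply more self-contained --- just delete the exploratory false starts before the displayed inequality when writing it up.
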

\begin{proof} Observe that  $ s \nu_{\bb_\bullet}^I(p^e) \le \nu_{\aa_\bullet}^I(p^e)$  as if  $\bb_r \not\subseteq I^{[p^e]}$, then  $\aa_r^s \not\subseteq I^{[p^e]}$ which implies that $\aa_{rs} \not\subseteq I^{[p^e]} $. Now, using definition of $\mathcal{C}^I_{\pm}(\bullet)$, we get $s\;{\mathcal{C}}^I_{\pm}(\bb_\bullet) \le {\mathcal{C}}^I_{\pm}(\aa_\bullet) .$ 
 {The second assertion follows from \cite[Proposition 2.2 (iii)]{HMTW}.}
 \end{proof}

The above mentioned results are very useful in the computation of $F$-thresholds of filtrations of monomial ideals as is illustrated below.  A filtration $\aa_\bullet$ in a polynomial ring is said to be  a {\it monomial filtration}, if for each $i\ge 1$, $\aa_i$ is a monomial ideal. First, we prove an auxiliary lemma: 
\begin{lemma} \label{tech-con}
Let $R=\mathbb{K}[x_1,\ldots,x_n]$, where $\mathbb{K}$ is a field of prime characteristic $p>0$ and  $\aa_\bullet$ be a non-trivial monomial filtration in $R$ and let $I= (x_{1}^{m_{1}},\ldots, x_{n}^{m_{n}})$. Then, for all $e \ge 0$, $$\nu_{\aa_\bullet}^I(p^e) = \sup \left\{ r : x_{1}^{p^em_{1}-1}\cdots x_{n}^{p^em_{n}-1} \in \aa_r\right\}.$$
\end{lemma}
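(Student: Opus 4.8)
The claim is that for a monomial ideal filtration $\aa_\bullet$ and the (Frobenius-power-friendly) ideal $I = (x_1^{m_1},\ldots,x_n^{m_n})$, the invariant $\nu_{\aa_\bullet}^I(p^e)$ is governed entirely by a single "corner" monomial, namely $x_1^{p^e m_1 - 1}\cdots x_n^{p^e m_n - 1}$. The plan is to show that for each fixed $e$ and each $r$, the non-containment $\aa_r \not\subseteq I^{[p^e]}$ is equivalent to $x_1^{p^e m_1 - 1}\cdots x_n^{p^e m_n - 1} \in \aa_r$; taking the supremum over such $r$ then yields the stated formula. The key observation is that $I^{[p^e]} = (x_1^{p^e m_1},\ldots,x_n^{p^e m_n})$ is itself a monomial ideal, and a monomial $x_1^{a_1}\cdots x_n^{a_n}$ lies in $I^{[p^e]}$ if and only if $a_i \ge p^e m_i$ for some $i$; equivalently, it lies \emph{outside} $I^{[p^e]}$ precisely when $a_i \le p^e m_i - 1$ for all $i$, i.e. it divides the corner monomial $x_1^{p^e m_1 - 1}\cdots x_n^{p^e m_n - 1}$.

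First I would establish the "easy" direction: if $x_1^{p^e m_1 - 1}\cdots x_n^{p^e m_n - 1} \in \aa_r$, then since this monomial is not in $I^{[p^e]}$, we immediately get $\aa_r \not\subseteq I^{[p^e]}$, so $r \le \nu_{\aa_\bullet}^I(p^e)$. Next, for the reverse direction, suppose $\aa_r \not\subseteq I^{[p^e]}$. Because $\aa_r$ is a monomial ideal, the non-containment is witnessed by a \emph{monomial} generator $u = x_1^{a_1}\cdots x_n^{a_n} \in \aa_r$ with $u \notin I^{[p^e]}$; by the observation above this forces $a_i \le p^e m_i - 1$ for every $i$, so $u$ divides the corner monomial $w := x_1^{p^e m_1 - 1}\cdots x_n^{p^e m_n - 1}$. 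Since $\aa_r$ is an ideal containing $u$, it contains every multiple of $u$, in particular $w \in \aa_r$. This shows that for any $r$ with $\aa_r \not\subseteq I^{[p^e]}$ we have $w \in \aa_r$, and combined with the first direction the two sets $\{r : \aa_r \not\subseteq I^{[p^e]}\}$ and $\{r : w \in \aa_r\}$ coincide, giving the formula after taking suprema. One should also remark that when $I^{[p^e]} = R^{[p^e]}$ does not occur (since $I$ is proper each $m_i \ge 1$, so the corner monomial is a genuine monomial and $I^{[p^e]}$ is proper), and that for $e = 0$ the statement reads off correctly from the definitions.

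The argument is essentially combinatorial and I do not expect a genuine obstacle; the one point requiring a little care is the reduction to a monomial witness. Concretely, if $f \in \aa_r$ is \emph{any} element (not a priori a monomial) with $f \notin I^{[p^e]}$, write $f = \sum_u c_u u$ as a sum of monomials. Since $I^{[p^e]}$ is a monomial ideal, $f \notin I^{[p^e]}$ forces at least one monomial $u$ appearing in $f$ to lie outside $I^{[p^e]}$; and since $\aa_r$ is a monomial ideal, every monomial appearing in $f \in \aa_r$ already lies in $\aa_r$. This gives the desired monomial $u \in \aa_r \setminus I^{[p^e]}$. The rest is the divisibility computation described above. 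Thus the main work is just assembling these standard facts about monomial ideals and Frobenius powers in a polynomial ring; no new idea beyond the characterization of $I^{[p^e]}$ as the "box" monomial ideal is needed.
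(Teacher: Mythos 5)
Your proposal is correct and follows essentially the same route as the paper: both reduce the non-containment $\aa_r \not\subseteq I^{[p^e]}$ to a monomial witness $u \in \aa_r \setminus I^{[p^e]}$, observe that $u$ must divide the corner monomial $x_1^{p^em_1-1}\cdots x_n^{p^em_n-1}$, and conclude that the corner monomial lies in $\aa_r$, establishing the equivalence whose supremum gives the formula. Your extra remark justifying the monomial witness via the decomposition $f=\sum_u c_u u$ is just making explicit a standard fact the paper uses implicitly.
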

\begin{proof}
Let $r$ be any positive integer. We claim that $\aa_r \not\subseteq I^{[p^e]}$ if and only if $x_{1}^{p^em_{1}-1}\cdots x_{n}^{p^em_{n}-1} \in \aa_r.$  Suppose that $x_{1}^{p^em_{1}-1}\cdots x_{n}^{p^em_{n}-1} \in \aa_r.$  Then, $\aa_r \not\subseteq I^{[p^e]}$ as $x_{1}^{p^em_{1}-1}\cdots x_{n}^{p^em_{n}-1} \not\in I^{[p^e]}$. Conversely, suppose we have $\aa_r \not\subseteq I^{[p^e]}$ for some $r$. Therefore, there exists a monomial $u \in \aa_r \setminus I^{[p^e]}.$ Since $u \not \in  I^{[p^e]}$, for each $1 \leq j \leq n$, $u$ is not divisible by $x_{j}^{p^em_{j}}.$ This implies that if $u$ is divisible by $m$-th power of $x_{j}$, then $m \leq p^em_{j}-1.$ Therefore, $u$ divides $x_{1}^{p^em_{1}-1}\cdots x_{n}^{p^em_{n}-1}$ which implies that $x_{1}^{p^em_{1}-1}\cdots x_{n}^{p^em_{n}-1} \in \aa_r$. This completes the claim. Now,  the assertion follows from the definition of $\nu_{\aa_\bullet}^I(p^e)$. 
\end{proof}

{\begin{proposition}\label{int-fil-mon}
Let $R=\mathbb{K}[x_1,\ldots,x_n]$, where $\mathbb{K}$ is a field of prime characteristic $p>0$ and  $\aa_\bullet, \bb_\bullet$ be nontrivial monomial filtrations in $R$.  Let  $\dd_\bullet =  \{ \aa_n \cap \bb_n\}_{n\geq 0}$.  Then, $\C_{\pm}^{\mm}(\dd_\bullet) =\min\{ \C_{\pm}^{\mm}(\aa_\bullet), \C_{\pm}^{\mm}(\bb_\bullet)\},$ where $\mm=(x_1,\ldots,x_n).$
\end{proposition}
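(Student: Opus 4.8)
The plan is to compute $\nu_{\dd_\bullet}^{\mm}(p^e)$ in terms of $\nu_{\aa_\bullet}^{\mm}(p^e)$ and $\nu_{\bb_\bullet}^{\mm}(p^e)$ using the monomial description from \Cref{tech-con}. Taking $I = \mm = (x_1,\ldots,x_n)$, so that $m_1 = \cdots = m_n = 1$, \Cref{tech-con} gives, for every filtration in question,
$$\nu_{\aa_\bullet}^{\mm}(p^e) = \sup\{ r : (x_1 \cdots x_n)^{p^e-1} \in \aa_r\},$$
and likewise for $\bb_\bullet$ and $\dd_\bullet$. Since $\dd_r = \aa_r \cap \bb_r$, the monomial $(x_1\cdots x_n)^{p^e-1}$ lies in $\dd_r$ if and only if it lies in both $\aa_r$ and $\bb_r$. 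Because $\aa_\bullet$ and $\bb_\bullet$ are filtrations (so membership of a fixed element is a ``downward closed'' condition in $r$: if $(x_1\cdots x_n)^{p^e-1} \in \aa_r$ then it lies in $\aa_{r'}$ for all $r' \le r$), the set of $r$ with $(x_1\cdots x_n)^{p^e-1} \in \dd_r$ is exactly the intersection of the two corresponding sets, whence
$$\nu_{\dd_\bullet}^{\mm}(p^e) = \min\{ \nu_{\aa_\bullet}^{\mm}(p^e),\ \nu_{\bb_\bullet}^{\mm}(p^e)\}$$
for every $e \ge 0$.

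With this exact equality in hand, the rest is a routine limit computation. First I would divide by $p^e$ to get $\frac{\nu_{\dd_\bullet}^{\mm}(p^e)}{p^e} = \min\left\{\frac{\nu_{\aa_\bullet}^{\mm}(p^e)}{p^e}, \frac{\nu_{\bb_\bullet}^{\mm}(p^e)}{p^e}\right\}$. Then I would apply $\limsup$ and $\liminf$: since $\min$ is continuous and monotone, $\limsup_e \min\{s_e, t_e\} = \min\{\limsup_e s_e, \limsup_e t_e\}$ need not hold in general, but here one only needs the two-sided inequality. For $\limsup$: $\min\{s_e,t_e\} \le s_e$ and $\le t_e$ give $\mathcal{C}_+^{\mm}(\dd_\bullet) \le \min\{\mathcal{C}_+^{\mm}(\aa_\bullet), \mathcal{C}_+^{\mm}(\bb_\bullet)\}$, which is already in \Cref{new_fil}; the reverse direction is the one needing the exact equality. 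The cleanest route is to observe that along any subsequence realizing $\limsup_e \frac{\nu_{\dd_\bullet}^{\mm}(p^e)}{p^e}$, the min is attained by (a further subsequence of) one of the two filtrations, say $\aa_\bullet$, forcing $\mathcal{C}_+^{\mm}(\dd_\bullet) \ge$ the corresponding value, which is $\ge \mathcal{C}_-^{\mm}(\aa_\bullet) \ge \min\{\mathcal{C}_-^{\mm}(\aa_\bullet),\mathcal{C}_-^{\mm}(\bb_\bullet)\}$ --- but to land exactly on $\min\{\mathcal{C}_\pm\}$ one should argue via the elementary identities $\liminf \min\{s_e,t_e\} = \min\{\liminf s_e, \liminf t_e\}$ (this one \emph{does} hold) together with the already-known inequality $\mathcal{C}_+^{\mm}(\dd_\bullet) \le \min\{\mathcal{C}_+^{\mm}(\aa_\bullet),\mathcal{C}_+^{\mm}(\bb_\bullet)\}$ from \Cref{new_fil} and $\mathcal{C}_-^{\mm}(\dd_\bullet) \le \mathcal{C}_+^{\mm}(\dd_\bullet)$; chaining these with $\min\{\mathcal{C}_-^{\mm}(\aa_\bullet),\mathcal{C}_-^{\mm}(\bb_\bullet)\} = \liminf_e \min\{\cdots\} = \mathcal{C}_-^{\mm}(\dd_\bullet)$ squeezes everything together and shows that $\mathcal{C}_+^{\mm}(\dd_\bullet) = \mathcal{C}_-^{\mm}(\dd_\bullet) = \min\{\mathcal{C}_\pm^{\mm}(\aa_\bullet), \mathcal{C}_\pm^{\mm}(\bb_\bullet)\}$, which also forces $\mathcal{C}_+^{\mm}(\aa_\bullet) = \mathcal{C}_-^{\mm}(\aa_\bullet)$ on the ``active'' side --- or, more honestly, the statement $\C_{\pm}^{\mm}(\dd_\bullet) = \min\{\C_{\pm}^{\mm}(\aa_\bullet), \C_{\pm}^{\mm}(\bb_\bullet)\}$ should be read componentwise ($+$ with $+$, $-$ with $-$) once one knows $\liminf \min = \min \liminf$ and the analogous fact fails for $\limsup$ only by an inequality that \Cref{new_fil} already covers.

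The only genuine subtlety --- and the place I would be most careful --- is the interaction between $\min$ and $\limsup$: the identity $\limsup_e \min\{s_e,t_e\} = \min\{\limsup_e s_e,\limsup_e t_e\}$ is \emph{false} in general, so one cannot simply quote it for the $+$ statement. The resolution is that \Cref{new_fil} already supplies $\mathcal{C}_+^{\mm}(\dd_\bullet) \le \min\{\mathcal{C}_+^{\mm}(\aa_\bullet),\mathcal{C}_+^{\mm}(\bb_\bullet)\}$, while the exact pointwise equality $\nu_{\dd_\bullet}^{\mm}(p^e) = \min\{\nu_{\aa_\bullet}^{\mm}(p^e),\nu_{\bb_\bullet}^{\mm}(p^e)\}$ combined with $\liminf \min = \min\liminf$ gives $\mathcal{C}_-^{\mm}(\dd_\bullet) = \min\{\mathcal{C}_-^{\mm}(\aa_\bullet),\mathcal{C}_-^{\mm}(\bb_\bullet)\}$; and since $\mathcal{C}_-^{\mm}(\aa_\bullet) \le \mathcal{C}_+^{\mm}(\aa_\bullet)$ etc., these two facts pin both $\pm$-versions to the common value $\min\{\cdots\}$ whenever the individual $F$-thresholds exist (e.g.\ when $\aa_\bullet,\bb_\bullet$ are $(\mm,p)$-admissible, via \Cref{F-threshold-*-filtration}). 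I would phrase the final statement accordingly: both $\C_+^{\mm}$ and $\C_-^{\mm}$ of $\dd_\bullet$ equal the $\min$ of the respective quantities for $\aa_\bullet$ and $\bb_\bullet$, with the $+$ half resting on \Cref{new_fil} and the $-$ half on the pointwise min-formula from \Cref{tech-con}.
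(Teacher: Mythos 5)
Your key step --- the pointwise identity $\nu_{\dd_\bullet}^{\mm}(p^e)=\min\{\nu_{\aa_\bullet}^{\mm}(p^e),\nu_{\bb_\bullet}^{\mm}(p^e)\}$, obtained from \Cref{tech-con} together with the observation that $(x_1\cdots x_n)^{p^e-1}$ lies in $\dd_r=\aa_r\cap\bb_r$ exactly when it lies in both $\aa_r$ and $\bb_r$ --- is precisely the argument in the paper, and it is correct; your claim that $\liminf_e\min\{s_e,t_e\}=\min\{\liminf_e s_e,\liminf_e t_e\}$ always holds is also correct.

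There is, however, a genuine gap in how you pass to the limit for the $+$ (limsup) half. You rightly point out that $\limsup_e\min\{s_e,t_e\}=\min\{\limsup_e s_e,\limsup_e t_e\}$ can fail, but your resolution only delivers the conclusion ``whenever the individual $F$-thresholds exist,'' and you propose to guarantee this via $(\mm,p)$-admissibility, which is not among the hypotheses of the proposition. The missing ingredient is the one the paper invokes at the end of its proof: $R$ is a polynomial ring, hence regular, so every ideal is Frobenius closed and \Cref{basic-prop}(1) applies to all three filtrations. Consequently each sequence $\nu_{\aa_\bullet}^{\mm}(p^e)/p^e$, $\nu_{\bb_\bullet}^{\mm}(p^e)/p^e$, $\nu_{\dd_\bullet}^{\mm}(p^e)/p^e$ is non-decreasing, and for each filtration $\C^{\mm}_+=\C^{\mm}_-=\sup_e$ (possibly infinite). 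For monotone sequences the limit of the minimum is the minimum of the limits, so your pointwise identity passes directly to the limit and yields both the $+$ and $-$ statements at once, with no existence assumption and no need for \Cref{new_fil}. With that single citation added, your argument coincides with the paper's proof.
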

\begin{proof}
 First, we claim that $\nu_{\dd_\bullet}^{\mm}(p^e) = \min\{ \nu_{\aa_\bullet}^{\mm}(p^e), \nu_{\bb_\bullet}^{\mm}(p^e)\}$ for all $e$.  Let $r \le \min\{ \nu_{\aa_\bullet}^{\mm}(p^e), \nu_{\bb_\bullet}^{\mm}(p^e)\}$ be any. Then, by \Cref{tech-con},  $x_1^{p^e-1} \cdots x_n^{p^e-1} \in \aa_r\cap \bb_r$ which implies that $x_1^{p^e-1} \cdots x_n^{p^e-1} \in \dd_r$. Thus, applying \Cref{tech-con}, we get $r \le \nu_{\dd_\bullet}^{\mm}(p^e)$, and hence, $\nu_{\dd_\bullet}^{\mm}(p^e) \ge \min\{ \nu_{\aa_\bullet}^{\mm}(p^e), \nu_{\bb_\bullet}^{\mm}(p^e)\}$. Next, if $\min\{ \nu_{\aa_\bullet}^{\mm}(p^e), \nu_{\bb_\bullet}^{\mm}(p^e)\}= \infty$, then $\nu_{\dd_\bullet}^{\mm}(p^e) =\min\{ \nu_{\aa_\bullet}^{\mm}(p^e), \nu_{\bb_\bullet}^{\mm}(p^e)\}$. So, we assume that $\min\{ \nu_{\aa_\bullet}^{\mm}(p^e), \nu_{\bb_\bullet}^{\mm}(p^e)\}< \infty$. Suppose $r >  \min\{ \nu_{\aa_\bullet}^{\mm}(p^e), \nu_{\bb_\bullet}^{\mm}(p^e)\}$, then either $x_1^{p^e-1} \cdots x_n^{p^e-1}\not\in \aa_r$ or $x_1^{p^e-1} \cdots x_n^{p^e-1} \not\in  \bb_r$. Therefore, $x_1^{p^e-1} \cdots x_n^{p^e-1} \not\in \aa_r\cap \bb_r$. Thus, by \Cref{tech-con}, the claim follows.  Now, by \Cref{basic-prop} and applying limit to the sequence $\{\frac{\nu_{\dd_\bullet}^{\mm}(p^e)}{p^e}\}$, we get the desired result. 
\end{proof}}

In the following, we show how useful is  \Cref{int-fil-mon} in computation of symbolic $F$-thresholds of monomial ideals.
\begin{example}
Let $R=\mathbb{K}[x_1,\ldots,x_n]$, where $\mathbb{K}$ is a field of prime characteristic $p>0$ and $\mm=(x_1,\ldots,x_n).$ Let $\pp_1,\ldots,\pp_k$ be monomial primes in $R$ and $\omega_1,\ldots,\omega_k$ be positive integers. Let $\aa=\cap_{i=1}^k\pp^{\omega_i}$, then, $\aa$ is a monomial ideal. {By \Cref{exist}, we know that $\C^{\mm}(\aa^{(\bullet)})$ exists.   First note that $\aa^{(\bullet)} =\cap_{i=1}^k \left(\pp_i^{\omega_i}\right)^{(\bullet)}.$ }Therefore, by \Cref{int-fil-mon}, $\C^{\mm}(\aa^{(\bullet)}) =\min_{1 \le i \le k} \C^{\mm}(\left(\pp_i^{\omega_i}\right)^{(\bullet)}).$ So, it is enough to compute $\C^{\mm}(\left(\pp^{\omega}\right)^{(\bullet)}),$  where $\pp$ is  a monomial prime ideal in $R$ and $\omega$ is a positive integer. Note that $\left(\pp^{\omega}\right)^{(\bullet)}=\left(\pp^{\omega}\right)^{\bullet}$. Now, by \Cref{new-fil-fil}, $\C^{\mm}(\left(\pp^{\omega}\right)^{\bullet})=\frac{\C^\mm(\pp^\bullet)}{\omega}=\frac{\hgt(\pp)}{\omega}$, where the equality $\C^\mm(\pp^\bullet) = \hgt(\pp)$ is well known and easy to show. Thus, $\C^{\mm}(\aa^{(\bullet)}) =\min_{1 \le i \le k} \frac{\hgt(\pp_i)}{\omega_i}.$ 
\end{example}

\begin{example}
Let $G$ be a chordal graph on the vertex set $\{x_1,\ldots,x_n\}$, $J(G)$ be the cover ideal of $G$ in $R=\mathbb{K}[x_1,\ldots,x_n]$, where $\mathbb{K}$ is a field of prime characteristic $p>0$ and $\mm=(x_1,\ldots,x_n).$ We claim that $\C^{\mm}(J(G)^{\bullet})=\frac{\omega(G)}{\omega(G)-1}$, where $\omega(G)$ is the clique number of $G.$ Let $K_{m_1},\ldots,K_{m_k}$ be the maximal cliques of $G$. It is well known fact that $G$ can be obtained as clique sums of maximal cliques (cf. \cite[Proposition 5.5.1]{Diestel}).  Then, if follows from \cite[Theorem 4.9]{JKM22}, $J(G)^{\bullet} =\cap_{i=1}^k J(K_{m_i})^{\bullet}.$ Thus, by \Cref{int-fil-mon}, $\C^{\mm}(J(G)^{\bullet}) =\min_{1 \le i \le k} \C^{\mm}(J(K_{m_i})^{\bullet})=\min_{ 1 \le i \le k} \frac{m_i}{m_i-1}$ where the last equality follows from \Cref{sym-ord-exm}. Finally note that $\min_{ 1 \le i \le k} \frac{m_i}{m_i-1}=\frac{\omega(G)}{\omega(G)-1}$.
\end{example}

Next, we show that one of the inequality in \Cref{new_fil} becomes an equality when the filtrations are in polynomial rings in disjoint set of variables over the same field.

{\begin{theorem}
Let $R_1=\mathbb{K}[x_1,\ldots,x_n]$ and $R_2=\mathbb{K}[y_1,\ldots,y_m]$, where $\mathbb{K}$ is a field of prime characteristic $p>0$ and let $R=\mathbb{K}[x_1,\ldots,x_n,y_1,\ldots,y_m].$ Let $\aa_\bullet$ and $\bb_\bullet$ be two filtrations in $R_1$ and $R_2$ respectively. Let $I\subseteq R_1$ and $J \subseteq R_2$ be non-zero proper ideals. Let $\cc_\bullet= \{ \aa_n \bb_n\}_{n\geq 0}$ and $\ee_\bullet= \{ \sum_{i=0}^n\aa_i\bb_{n-i}\}_{n\geq 0}$. Then, 
\begin{enumerate}
\item $\mathcal{C}_\pm^{I+J}(\ee_\bullet) =\C_\pm^{I+J}(\aa_\bullet)+\C_\pm^{I+J}(\bb_\bullet)=\mathcal{C}_\pm^I(\aa_\bullet)+ \mathcal{C}_\pm^J(\bb_\bullet) ;$
\item $\mathcal{C}_\pm^{IJ}(\cc_\bullet) = \max\{\mathcal{C}_\pm^{I}(\aa_\bullet), \mathcal{C}_\pm^J(\bb_\bullet) \}.$
\end{enumerate}
\end{theorem}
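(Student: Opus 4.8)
The plan is to reduce everything to counting exponents via the functions $\nu$, exactly as in the proofs of \Cref{new_fil} and the auxiliary lemmas, and then exploit the fact that the variables of $R_1$ and $R_2$ are disjoint. The key observation I would first isolate is the following ``separation of variables'' identity. For ideals $\aa_i\subseteq R_1$ and $\bb_j\subseteq R_2$, the ideal $\aa_i\bb_j\subseteq R$ has a monomial-type generating set obtained by multiplying generators, so a monomial $u v$ (with $u$ in the $x$-variables and $v$ in the $y$-variables) lies in $\aa_i\bb_j$ iff $u\in\aa_i$ and $v\in\bb_j$. Likewise $(\aa_i\bb_j)^{[p^e]} = \aa_i^{[p^e]}\bb_j^{[p^e]}$, and $u v\in \aa_i^{[p^e]}\bb_j^{[p^e]}$ iff $u\in\aa_i^{[p^e]}$ and $v\in\bb_j^{[p^e]}$. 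For the sum filtration $\ee_\bullet$, the containment $\ee_n\subseteq (I+J)^{[p^e]} = I^{[p^e]}+J^{[p^e]}$ can be tested on the spanning products $\aa_i\bb_{n-i}$, and a product $\aa_i\bb_{n-i}$ fails to be inside $I^{[p^e]}+J^{[p^e]}$ precisely when there exist monomials $u\notin I^{[p^e]}$ in $\aa_i$ and $v\notin J^{[p^e]}$ in $\bb_{n-i}$ whose product is outside; because the variables are disjoint, $uv\in I^{[p^e]}+J^{[p^e]}$ forces $u\in I^{[p^e]}$ or $v\in J^{[p^e]}$. (If $\aa_\bullet$ or $\bb_\bullet$ is not a monomial filtration I would instead pass to leading terms / use that $R$ is free over $R_1\otimes_{\mathbb K} R_2$ with monomial basis in the complementary variables; the key point is only that $I^{[p^e]}R + J^{[p^e]}R$ is ``$\mathbb K$-split'' into its two halves.)

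\textbf{Part (1).} With the above, I claim $\nu_{\ee_\bullet}^{I+J}(p^e) = \nu_{\aa_\bullet}^{I}(p^e) + \nu_{\bb_\bullet}^{J}(p^e)$ for all $e$. The inequality $\le$ is already in \Cref{new_fil} (applied with $I$ replaced by $I+J$, noting $\aa_i\subseteq (I+J)^{[p^e]}\cap R_1 \Leftrightarrow \aa_i\subseteq I^{[p^e]}$ by disjointness of variables, so $\nu_{\aa_\bullet}^{I+J}=\nu_{\aa_\bullet}^{I}$ and similarly for $\bb$). For the reverse inequality, set $a=\nu_{\aa_\bullet}^{I}(p^e)$ and $b=\nu_{\bb_\bullet}^{J}(p^e)$ (if either is $\infty$ the statement is immediate). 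Pick $u\in\aa_a\setminus I^{[p^e]}$ and $v\in\bb_b\setminus J^{[p^e]}$; one may take $u,v$ to be monomials since we reduce to monomial generators, or in general pick them and look at a leading term. Then $uv\in \aa_a\bb_b\subseteq \ee_{a+b}$ and $uv\notin I^{[p^e]}+J^{[p^e]}=(I+J)^{[p^e]}$ by the separation property, so $\nu_{\ee_\bullet}^{I+J}(p^e)\ge a+b$. Dividing by $p^e$ and taking $\limsup$/$\liminf$ gives the first equality in (1); the second equality $\C_\pm^{I+J}(\aa_\bullet)=\C_\pm^I(\aa_\bullet)$ is the identity $\nu_{\aa_\bullet}^{I+J}=\nu_{\aa_\bullet}^{I}$ noted above.

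\textbf{Part (2).} Here I would show $\nu_{\cc_\bullet}^{IJ}(p^e) = \max\{\nu_{\aa_\bullet}^{I}(p^e),\nu_{\bb_\bullet}^{J}(p^e)\}$. Note $(IJ)^{[p^e]} = I^{[p^e]}J^{[p^e]}$, and for a product monomial $uv$ we have $uv\in I^{[p^e]}J^{[p^e]}$ iff $u\in I^{[p^e]}$ and $v\in J^{[p^e]}$ (disjoint variables again). For the $\ge$ direction: if $r\le\nu_{\aa_\bullet}^I(p^e)$ there is a monomial $u\in\aa_r\setminus I^{[p^e]}$; then $u\cdot 1\in\aa_r\bb_r=\cc_r$ since $1\in\bb_0\supseteq\bb_r$... — more carefully, $\aa_r\bb_r\supseteq \aa_r\bb_r$ and I need a witness actually in $\cc_r=\aa_r\bb_r$, so take any monomial $w\in\bb_r$ (nontriviality of $\bb_\bullet$) and observe $uw\in\cc_r$ while $uw\notin (IJ)^{[p^e]}$ because its $x$-part $u\notin I^{[p^e]}$; hence $\nu_{\cc_\bullet}^{IJ}(p^e)\ge\nu_{\aa_\bullet}^I(p^e)$, and symmetrically $\ge\nu_{\bb_\bullet}^J(p^e)$. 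For $\le$: suppose $r>\max\{\nu_{\aa_\bullet}^I(p^e),\nu_{\bb_\bullet}^J(p^e)\}$; then $\aa_r\subseteq I^{[p^e]}$ and $\bb_r\subseteq J^{[p^e]}$, so $\cc_r=\aa_r\bb_r\subseteq I^{[p^e]}J^{[p^e]}=(IJ)^{[p^e]}$. Taking $\limsup$/$\liminf$ of $\nu/p^e$ and using that $\max$ commutes with both (for two sequences, $\limsup\max = \max\limsup$ holds; likewise $\liminf$ — the latter needs a small argument but is standard once one knows the two limits exist, which they do for these $(I,p)$-admissible-type filtrations, though we only need the $\pm$ statement here and $\max$-vs-$\liminf$ for two sequences does hold) yields (2).

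\textbf{Main obstacle.} The technical heart is the ``separation of variables'' fact that $uv\in I^{[p^e]}+J^{[p^e]}$ (resp. $I^{[p^e]}J^{[p^e]}$) forces a condition on $u$ and $v$ individually, and correspondingly that $\cc_n,\ee_n$ can be tested on product monomials. For monomial filtrations this is the combinatorics of \Cref{tech-con}; for arbitrary filtrations it requires the flatness/freeness of $R$ over $R_1$ and over $R_2$ and a careful bookkeeping of the $\mathbb K$-vector-space decomposition $R/(I^{[p^e]}R+J^{[p^e]}R)\cong (R_1/I^{[p^e]})\otimes_{\mathbb K}(R_2/J^{[p^e]})$, so that a nonzero image of $uv$ corresponds to nonzero images of $u$ and $v$. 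I expect handling the general (non-monomial) case cleanly to be the one place demanding care; the $\limsup/\liminf$ manipulations and the $\le$ directions are routine given \Cref{new_fil}.
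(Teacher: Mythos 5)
Your proposal follows essentially the same route as the paper: both arguments establish the exact identities $\nu^{I+J}_{\ee_\bullet}(p^e)=\nu^{I}_{\aa_\bullet}(p^e)+\nu^{J}_{\bb_\bullet}(p^e)$ and $\nu^{IJ}_{\cc_\bullet}(p^e)=\max\{\nu^{I}_{\aa_\bullet}(p^e),\nu^{J}_{\bb_\bullet}(p^e)\}$ together with $\nu^{I+J}_{\aa_\bullet}=\nu^{I}_{\aa_\bullet}$, using disjointness of the variables (the splitting $R/(I^{[p^e]}+J^{[p^e]})R\cong (R_1/I^{[p^e]})\otimes_{\mathbb{K}}(R_2/J^{[p^e]})$ that you spell out is exactly what the paper leaves implicit when it asserts $\aa_r\bb_s\not\subseteq (I+J)^{[p^e]}$), and then pass to $\limsup/\liminf$. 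One small correction: $\max$ does not commute with $\liminf$ for arbitrary pairs of sequences, and the existence of the limits here comes not from $(I,p)$-admissibility but from \Cref{basic-prop}(1) — $R$ is a polynomial ring, hence regular, so the sequences $\nu(p^e)/p^e$ are eventually monotone — which is also what legitimizes the sum and max manipulations of the $\pm$ quantities in both your argument and the paper's.
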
}
\begin{proof} $(1)$ {First, observe that $\aa_r \not\subseteq I^{[p^e]}$ if and only if $\aa_r \not\subseteq (I+J)^{[p^e]}=I^{[p^e]}+J^{[p^e]}. $ Therefore, $\nu^{I+J}_{\aa_\bullet}(p^e)=\nu^{I}_{\aa_\bullet}(p^e)$, and hence, by \Cref{basic-prop} $\C_\pm^{I+J}(\aa_\bullet)=\C_\pm^{I}(\aa_\bullet)$. Similarly, $\C_\pm^{I+J}(\bb_\bullet)=\C_\pm^{J}(\bb_\bullet)$. Now, it follows from \Cref{basic-prop} and from \Cref{new_fil} that  $\mathcal{C}_\pm^{I+J}(\ee_\bullet) \le \C_\pm^{I+J}(\aa_\bullet)+\C_\pm^{I+J}(\bb_\bullet)=\mathcal{C}_\pm^I(\aa_\bullet)+ \mathcal{C}_\pm^J(\bb_\bullet) .$ Fix a positive integer $e$, take $r \le\nu^I_{\aa_\bullet}(p^e)$ and $s \le\nu^J_{\bb_\bullet}(p^e) $ be any.  Then, $\ee_{r+s} \not\subseteq (I+J)^{[p^e]}$ as $\aa_r \bb_s \subseteq \ee_{r+s}$. Therefore, $\nu^I_{\aa_\bullet}(p^e)+\nu^J_{\bb_\bullet}(p^e) \le \nu^{I+J}_{\ee_\bullet}(p^e)$  for all $e$. Now, by \Cref{basic-prop}, $\mathcal{C}_\pm^I(\aa_\bullet)+ \mathcal{C}_\pm^J(\bb_\bullet) \le \mathcal{C}_\pm^{I+J}(\ee_\bullet).$ Hence, $\mathcal{C}_\pm^{I+J}(\ee_\bullet) =\C_\pm^{I+J}(\aa_\bullet)+\C_\pm^{I+J}(\bb_\bullet)=\mathcal{C}_\pm^I(\aa_\bullet)+ \mathcal{C}_\pm^J(\bb_\bullet) .$}
\par $(2)$
First, note that if $s > \max \{ \nu^I_{\aa_\bullet}(p^e), \nu^J_{\bb_\bullet}(p^e) \}$, then $\aa_s \subseteq I^{[p^e]}$ and $\bb_s \subseteq J^{[p^e]}$ which implies that $\cc_s \subseteq (IJ)^{[p^e]}.$ Also, if $r=\max \{ \nu^I_{\aa_\bullet}(p^e), \nu^J_{\bb_\bullet}(p^e) \}.$ Then, $\cc_r=\aa_r \bb_r \not\subseteq I^{[p^e]}J^{[p^e]}$. Consequently,  $\nu^{IJ}_{\cc_\bullet}(p^e)=\max\{\nu^I_{\aa_\bullet}(p^e),\nu^J_{\bb_\bullet}(p^e)\}$  for all $e$. Hence,  the assertion follows from \Cref{basic-prop}. 
\end{proof}


\section{$F$-thresholds of symbolic power filtrations}\label{sym-sec}
In this section, we focus on symbolic $F$-threshold of an ideal and its connection with the notion of symbolic $F$-split ideals, recently introduced in \cite{AJL21}. We prove that symbolic $F$-threshold of an ideal being big-height of that ideal is a sufficient condition for that ideal to be symbolic $F$-split. Throughout this section, we assume that $R$ is a regular ring of prime characteristic $p>0$.

We now obtain an upper bound for symbolic $F$-threshold of an ideal.
\begin{proposition}\label{sym-bound}
Let $I$ and $\aa$ be non-zero proper ideals of $R$ such that $\aa \subseteq \sqrt I.$ Then,
{$$\mathcal{C}^{\mathfrak{p}}(\aa^{(\bullet)}) \le \mathcal{C}^{\sqrt I}(\aa^{(\bullet)}) \le \mathcal{C}^{\sqrt \aa}(\sqrt{\aa}^{(\bullet)}) \le \text{big-height}(\sqrt\aa),$$ for any prime ideal $\mathfrak{p}$ containing $I$. In particular, $\mathcal{C}^{\sqrt I} (\aa^{\bullet}) \le \text{big-height}(\sqrt\aa).$}
\end{proposition}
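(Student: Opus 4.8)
The plan is to verify the displayed chain one inequality at a time and then read off the ``in particular'' statement. A preliminary observation: since $R$ is regular, $J^{[p^e]}=(J^{[p^e]})^F$ for every ideal $J$ and every $e$, so by \Cref{basic-prop}(1) each $F$-threshold occurring below is computed as the supremum over $e\ge 0$ of the associated sequence $\frac{\nu(p^e)}{p^e}$, which therefore exists a priori in $[0,\infty]$ (alternatively one may quote \Cref{exist} for the relevant pairs). Thus all the asserted comparisons reduce to comparisons of such suprema, and the whole proposition amounts to one genuine bound together with three elementary monotonicity steps.

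For the first two inequalities I would use only inclusions of ideals and of filtrations. Since $\mathfrak p$ is a prime containing $I$, it contains $\sqrt I$; and taking radicals in $\aa\subseteq\sqrt I$ gives $\sqrt\aa\subseteq\sqrt I$. Applying \Cref{basic-prop}(3) twice then yields $\mathcal C^{\mathfrak p}_{\pm}(\aa^{(\bullet)})\le\mathcal C^{\sqrt I}_{\pm}(\aa^{(\bullet)})$ and $\mathcal C^{\sqrt I}_{\pm}(\aa^{(\bullet)})\le\mathcal C^{\sqrt\aa}_{\pm}(\aa^{(\bullet)})$. Next, for every $i$ one has $\aa^{(i)}\subseteq(\sqrt\aa)^{(i)}$ — exactly as recorded in the proof of \Cref{exist}, because $\Min(\aa)\subseteq\Ass(\aa)$ and $\aa^i\subseteq(\sqrt\aa)^i$ — hence $\aa^{(\bullet)}\le(\sqrt\aa)^{(\bullet)}$, and \Cref{fin-fil} gives $\mathcal C^{\sqrt\aa}_{\pm}(\aa^{(\bullet)})\le\mathcal C^{\sqrt\aa}_{\pm}\left((\sqrt\aa)^{(\bullet)}\right)$. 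Splicing these three steps produces the first two inequalities of the statement.

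The heart of the matter is the last inequality $\mathcal C^{\sqrt\aa}\left((\sqrt\aa)^{(\bullet)}\right)\le\text{big-height}(\sqrt\aa)$. Put $\bb=\sqrt\aa$ and $H=\text{big-height}(\bb)$, so $\bb$ is a radical ideal in the regular ring $R$ and the Frobenius-power symbolic containment applies: taking $m=0$ in \cite[Lemma 2.6]{GH17} gives
$$\bb^{(Hp^e-H+1)}\subseteq\left(\bb^{(1)}\right)^{[p^e]}=\bb^{[p^e]}\qquad\text{for all }e\ge 0.$$
Consequently $\nu^{\bb}_{\bb^{(\bullet)}}(p^e)\le Hp^e-H=H(p^e-1)$, so $\frac{\nu^{\bb}_{\bb^{(\bullet)}}(p^e)}{p^e}\le H\left(1-\tfrac1{p^e}\right)<H$ for every $e$; taking the supremum over $e$ yields $\mathcal C^{\bb}\left(\bb^{(\bullet)}\right)\le H$. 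In particular this quantity is finite, so the entire chain consists of genuine non-negative real numbers. Finally, the ``in particular'' assertion follows from $\aa^i\subseteq\aa^{(i)}$, i.e.\ $\aa^{\bullet}\le\aa^{(\bullet)}$: by \Cref{fin-fil} and the chain just established, $\mathcal C^{\sqrt I}(\aa^{\bullet})\le\mathcal C^{\sqrt I}(\aa^{(\bullet)})\le H$.

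The only real obstacle is obtaining the \emph{sharp} constant $H$ in the last step. Crude inputs overshoot: combining $\bb^{(Hs)}\subseteq\bb^s$ from \cite[Theorem 2.6]{HH02} with the Pigeonhole containment $\bb^{\mu(\bb)(p^e-1)+1}\subseteq\bb^{[p^e]}$ in the manner of \Cref{suff} yields only the bound $H\mu(\bb)$, and feeding $\bb^{(\bullet)}$ through the $(I,p)$-admissible estimate of \Cref{F-threshold-*-filtration} yields only $2H-1$. What is needed is a symbolic containment whose ``slope'' in $p^e$ is exactly $H$, namely $\bb^{(Hp^e-H+1)}\subseteq\bb^{[p^e]}$; granted that, the remaining limit computation is immediate.
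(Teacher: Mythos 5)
Your argument is correct and follows essentially the same route as the paper: monotonicity in the ambient ideal via \Cref{basic-prop}(3), monotonicity in the filtration $\aa^{(\bullet)}\le\sqrt{\aa}^{(\bullet)}$ via \Cref{fin-fil}, and then the key bound $\mathcal C^{\bb}(\bb^{(\bullet)})\le H$ for a radical ideal $\bb$ from a symbolic-into-Frobenius-power containment of slope $H$, finished with $\aa^{\bullet}\le\aa^{(\bullet)}$ for the ``in particular'' claim. The only (immaterial) difference is the citation for that containment: the paper invokes \cite[Lemma 2.4]{HH02} to get $\bb^{(Hp^e)}\subseteq\bb^{[p^e]}$, whereas you take $m=0$ in \cite[Lemma 2.6]{GH17} to get the slightly stronger $\bb^{(Hp^e-H+1)}\subseteq\bb^{[p^e]}$; both yield $\nu^{\bb}_{\bb^{(\bullet)}}(p^e)/p^e<H$ and hence the same conclusion.
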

\begin{proof}
It follows from Theorem \ref{exist} that symbolic $F$-threshold of $\aa$ exists with respect to any ideal whose radical contains $\aa$. For any prime ideal $\mathfrak{p}$ of $R$ that contains $I$, by Proposition \ref{basic-prop}, $\mathcal{C}^{\mathfrak{p}}(\aa^{(\bullet)}) \le \mathcal{C}^{\sqrt I}(\aa^{(\bullet)}) \le \mathcal{C}^{\sqrt \aa}({\aa}^{(\bullet)})$ as $\sqrt \aa \subseteq \sqrt I \subseteq \mathfrak{p}$.  Note that $\aa^{(\bullet)} \le \sqrt{\aa}^{(\bullet)}$. Consequently, by Theorem \ref{fin-fil}, $\mathcal{C}^{\sqrt \aa}({\aa}^{(\bullet)}) \le \mathcal{C}^{\sqrt \aa}(\sqrt{\aa}^{(\bullet)}).$ Now, it is enough to prove that if $\bb$ is a non-zero proper  radical ideal of $R$, then $\mathcal{C}^{\bb}(\bb^{(\bullet)}) \le \text{big-height}(\bb).$ Let $H=\text{big-height}(\bb).$ Then, by \cite[Lemma 2.4]{HH02}, $\bb^{(Hp^e)} \subseteq \bb^{[p^e]}$ for all $e \ge 0$. Therefore, $\nu_{\bb^{(\bullet)}}^{\bb}(p^e) \le Hp^e-1$ which implies that $\frac{\nu_{\bb^{(\bullet)}}^{\bb}(p^e)}{p^e} < H$ for all $e \ge 0$.  Consequently, $\mathcal{C}^{\bb}(\bb^{(\bullet)})=\sup_{e\ge 0} \frac{\nu_{\bb^{(\bullet)}}^{\bb}(p^e)}{p^e} \le H.$ Thus, the  assertions follow. 
\end{proof}
\begin{proposition}\label{sym-bound1}
Let $I$ and $\aa$ be non-zero proper ideals of $R$ such that $\sqrt{\aa} \subseteq  I.$ Then,
$$\mathcal{C}^{\mathfrak{p}}(\aa^{(\bullet)}) \le \mathcal{C}^{I}(\aa^{(\bullet)}) \le \mathcal{C}^{I}(\sqrt{\aa}^{(\bullet)}) \le \mathcal{C}^{\sqrt \aa}(\sqrt{\aa}^{(\bullet)}) \le \text{big-height}(\sqrt\aa),$$  for any prime ideal $\mathfrak{p}$ containing $I$. In particular, $\mathcal{C}^{ I} (\aa^{\bullet}) \le \text{big-height}(\sqrt\aa).$
\end{proposition}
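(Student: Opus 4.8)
The plan is to obtain the displayed chain of inequalities by stringing together the monotonicity statements already established, and then to reuse verbatim the concluding estimate from Proposition \ref{sym-bound} for the last link. First I would record that all the $F$-thresholds in the statement exist: since $\sqrt{\aa}\subseteq I$ we have $\aa\subseteq\sqrt{\aa}\subseteq I\subseteq\sqrt{I}\subseteq\mathfrak{p}$ for any prime $\mathfrak{p}\supseteq I$, so $\aa\subseteq\sqrt{I}$ and (trivially) $\sqrt{\aa}\subseteq\sqrt{\sqrt{\aa}}$; hence Theorem \ref{exist} applies to $\aa^{(\bullet)}$ with respect to each of $\mathfrak{p}$, $I$, $\sqrt{\aa}$ and to $\sqrt{\aa}^{(\bullet)}$ with respect to $I$ and $\sqrt{\aa}$, while \cite[Theorem 3.4]{DNP18} gives existence of $\mathcal{C}^{I}(\aa^{\bullet})$ needed for the ``in particular'' clause.

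Next I would assemble the chain link by link. The outer two inequalities $\mathcal{C}^{\mathfrak{p}}(\aa^{(\bullet)})\le\mathcal{C}^{I}(\aa^{(\bullet)})$ and $\mathcal{C}^{I}(\sqrt{\aa}^{(\bullet)})\le\mathcal{C}^{\sqrt{\aa}}(\sqrt{\aa}^{(\bullet)})$ follow at once from Proposition \ref{basic-prop}(3) (monotonicity of $\mathcal{C}^{(-)}_{\pm}$ under enlarging the second ideal), using the containments $I\subseteq\mathfrak{p}$ and $\sqrt{\aa}\subseteq I$ respectively. The second of these is exactly the place where the hypothesis $\sqrt{\aa}\subseteq I$ is used, as opposed to the weaker $\aa\subseteq\sqrt{I}$ of Proposition \ref{sym-bound}; it is what permits inserting $\mathcal{C}^{I}(\sqrt{\aa}^{(\bullet)})$ directly into the chain instead of routing through $\sqrt{I}$. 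For the middle inequality $\mathcal{C}^{I}(\aa^{(\bullet)})\le\mathcal{C}^{I}(\sqrt{\aa}^{(\bullet)})$ I would note $\aa^{(\bullet)}\le\sqrt{\aa}^{(\bullet)}$ — because $\Ass(\sqrt{\aa})=\Min(\aa)\subseteq\Ass(\aa)$ and $\aa^{i}\subseteq\sqrt{\aa}^{i}$ together force $\aa^{(i)}\subseteq\sqrt{\aa}^{(i)}$ for all $i$ — and then invoke Theorem \ref{fin-fil}.

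The only link requiring genuine input is the last one, $\mathcal{C}^{\sqrt{\aa}}(\sqrt{\aa}^{(\bullet)})\le\text{big-height}(\sqrt{\aa})$, and this is precisely the final paragraph of the proof of Proposition \ref{sym-bound}: setting $\bb=\sqrt{\aa}$ and $H=\text{big-height}(\bb)$, \cite[Lemma 2.4]{HH02} gives $\bb^{(Hp^{e})}\subseteq\bb^{[p^{e}]}$ for all $e\ge0$, whence $\nu_{\bb^{(\bullet)}}^{\bb}(p^{e})\le Hp^{e}-1$; since $R$ is regular, $\bb^{[p^{e}]}$ is Frobenius closed, so Proposition \ref{basic-prop}(1) yields $\mathcal{C}^{\bb}(\bb^{(\bullet)})=\sup_{e\ge0}\nu_{\bb^{(\bullet)}}^{\bb}(p^{e})/p^{e}\le H$. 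Finally, for the ``in particular'' statement, $\aa^{\bullet}\le\aa^{(\bullet)}$ gives via Theorem \ref{fin-fil} that $\mathcal{C}^{I}(\aa^{\bullet})\le\mathcal{C}^{I}(\aa^{(\bullet)})\le\text{big-height}(\sqrt{\aa})$. I do not expect a real obstacle here: the statement is a variant of Proposition \ref{sym-bound} and the proof is essentially bookkeeping with the monotonicity lemmas, the one point to keep straight being which of the two ideal containments ($\sqrt{\aa}\subseteq I$ versus $\aa\subseteq\sqrt{I}$) licenses each application of Proposition \ref{basic-prop}(3).
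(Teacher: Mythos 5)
Your argument is correct and is essentially the paper's: the paper simply says the proof is "similar to that of Proposition \ref{sym-bound}", and your chain — Proposition \ref{basic-prop}(3) for the two containment-of-ideals links, $\aa^{(\bullet)}\le\sqrt{\aa}^{(\bullet)}$ with Theorem \ref{fin-fil} for the middle link, and the pigeonhole estimate $\bb^{(Hp^e)}\subseteq\bb^{[p^e]}$ from \cite[Lemma 2.4]{HH02} for the final bound — is exactly that adaptation, with the existence and the ``in particular'' clause handled as in the paper.
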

\begin{proof}
The proof is similar to the proof of Proposition \ref{sym-bound}. 
\end{proof}
\begin{setup}\label{setup1}
Let $(R,\mm, \mathbb{K})$ be a {$F$-finite} regular ring which is either local or positively graded polynomial ring over $\mathbb{K}$ with homogeneous maximal ideal $\mm$. In case when $R$ is graded, elements and ideals considered are homogeneous.
\end{setup}


If we replace $I$ by a maximal ideal containing $\aa$ in Proposition \ref{sym-bound}, then $\text{ht}(\aa)$ serve as an upper bound for symbolic $F$-threshold. We prove this in the following:
 
\begin{corollary} \label{cor:bound-of-F-threshold} Let $(R,\mm, \mathbb{K})$ be a regular ring with \Cref{setup1} and let $\aa$ be  a non-zero proper ideal of $R$. Then, $$\mathcal{C}^{\mathfrak{m}}(\mathfrak{a}^{\bullet})\leq \mathcal{C}^{\mathfrak{m}}(\mathfrak{a}^{({\bullet})})\leq  \text{ht} (\aa).$$
\end{corollary}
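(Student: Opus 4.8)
## Proof proposal

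The plan is to deduce this statement directly from \Cref{sym-bound} by choosing the ambient ideal $I$ carefully. The inequality $\mathcal{C}^{\mathfrak{m}}(\mathfrak{a}^{\bullet})\leq \mathcal{C}^{\mathfrak{m}}(\mathfrak{a}^{(\bullet)})$ is immediate: since $\aa^i\subseteq \aa^{(i)}$ for all $i$, we have $\aa^{\bullet}\le \aa^{(\bullet)}$, and \Cref{fin-fil} (or directly the definition of $\nu$) gives the comparison of $F$-thresholds. So the content is the bound $\mathcal{C}^{\mathfrak{m}}(\aa^{(\bullet)})\le \height(\aa)$.

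For this, first I would reduce to the radical case. As in the proof of \Cref{sym-bound}, we have $\aa^{(\bullet)}\le \sqrt{\aa}^{(\bullet)}$, so by \Cref{fin-fil} it suffices to bound $\mathcal{C}^{\mathfrak{m}}(\sqrt{\aa}^{(\bullet)})$ by $\height(\sqrt\aa)=\height(\aa)$. Thus without loss of generality $\aa$ is a radical ideal. Now the key point is the choice of $I$: I would take $I=\mm$ itself, which is a maximal (in particular prime) ideal containing $\aa$ since $\aa$ is proper. Then \Cref{sym-bound} applies with $\mathfrak{p}=\mm$ and yields
$$\mathcal{C}^{\mathfrak{m}}(\aa^{(\bullet)})\le \mathcal{C}^{\sqrt\aa}(\sqrt{\aa}^{(\bullet)})\le \text{big-height}(\sqrt\aa).$$
The remaining gap is that \Cref{sym-bound} only gives big-height, whereas we want the (small) height $\height(\aa)$, and in general $\height(\aa)\le \text{big-height}(\aa)$ with strict inequality possible.

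The main obstacle, therefore, is improving big-height to height, and this is exactly where \Cref{setup1} (that $R$ is regular, $F$-finite, local or graded, with $\aa$ homogeneous in the graded case) should be used. The idea is that under \Cref{setup1} one can localize: for a minimal prime $\pp$ of $\aa$ of height equal to $\text{big-height}(\aa)$... but that runs the wrong direction. Instead I expect the right move is to revisit the inequality $\nu_{\bb^{(\bullet)}}^{\bb}(p^e)\le Hp^e-1$ in the proof of \Cref{sym-bound}: that step invoked $\bb^{(Hp^e)}\subseteq \bb^{[p^e]}$ from \cite[Lemma 2.4]{HH02}. Over a regular local (or graded) ring, one has the sharper containment $\bb^{(hp^e)}\subseteq \bb^{[p^e]}$ where $h=\height(\bb)$, because the number of generators needed locally at each associated prime of $\bb^{[p^e]}$ — all of which have height $\height(\bb)$ when $\bb$ is radical and $R$ is regular, so that $R/\bb$ is equidimensional at those primes, or more precisely because $\bb_\pp$ is generated by a regular sequence of length $\height\bb_\pp$ — is bounded by $\height(\bb)$ rather than the number of generators $\mu(\bb)$. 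Concretely, for radical $\bb$ in a regular ring, $\bb^{(m)}=\bigcap_{\pp\in \Min(\bb)}\bb^m R_\pp\cap R$, and at each such $\pp$, $\bb R_\pp$ is generated by a regular sequence of length $\height(\pp)\le \height(\bb)$, so $\bb^{(hp^e)}R_\pp=(\bb R_\pp)^{hp^e}\subseteq (\bb R_\pp)^{[p^e]}=\bb^{[p^e]}R_\pp$ by the pigeonhole argument with $h=\height(\bb)$ generators; intersecting over $\pp$ gives $\bb^{(hp^e)}\subseteq \bb^{[p^e]}$. Hence $\nu_{\bb^{(\bullet)}}^{\bb}(p^e)\le hp^e-1$, so $\mathcal{C}^{\bb}(\bb^{(\bullet)})\le h=\height(\bb)$, and combined with the reduction above and \Cref{basic-prop}(3) applied to $\mm\supseteq \bb=\sqrt\aa$, we conclude $\mathcal{C}^{\mathfrak{m}}(\aa^{(\bullet)})\le \mathcal{C}^{\sqrt\aa}(\sqrt{\aa}^{(\bullet)})\le \height(\sqrt\aa)=\height(\aa)$. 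I would double-check whether the cited \cite[Lemma 2.4]{HH02} is already stated with $\height$ in the radical case (it may well be), in which case the argument collapses to a direct invocation of \Cref{sym-bound} with $H$ read as $\height$.
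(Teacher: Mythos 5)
Your reduction to the radical case and the first inequality are fine, but the step you rely on to improve big-height to height is false, and the error is a reversed inequality: for a minimal prime $\pp$ of a radical ideal $\bb$ one has $\hgt(\pp)\ge \hgt(\bb)$, not $\hgt(\pp)\le \hgt(\bb)$, because $\hgt(\bb)$ is the \emph{minimum} of the heights of the minimal primes while big-height is the maximum. Localizing at $\pp$ and applying the pigeonhole argument therefore forces the exponent $\hgt(\pp)\,p^e$, and intersecting over all minimal primes only recovers the big-height containment of \cite{HH02}; that is also how the cited results are stated, so there is no sharper version with the small height to fall back on. Concretely, take $\bb=(x)\cap(y,z)=(xy,xz)$ in $\mathbb{K}[x,y,z]$, so $\hgt(\bb)=1$ and $\bb^{(m)}=x^m(y,z)^m$. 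Then for $q=p^e$ the element $x^{q}y^{q-1}z$ lies in $\bb^{(q)}$ but not in $\bb^{[q]}=(x^qy^q,x^qz^q)$, so $\bb^{(\hgt(\bb)q)}\not\subseteq\bb^{[q]}$. Worse, the intermediate bound your route needs, $\mathcal{C}^{\bb}(\bb^{(\bullet)})\le\hgt(\bb)$, is itself false: in this example $\nu^{\bb}_{\bb^{(\bullet)}}(q)=2q-2$, so $\mathcal{C}^{\bb}(\bb^{(\bullet)})=2=\text{big-height}(\bb)>\hgt(\bb)=1$. So no strengthening of \Cref{sym-bound} of the kind you propose can hold.

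The correct passage from big-height to height is not a sharper containment lemma but a change of the comparison filtration and of the ideal $I$, which is what the paper does: choose a minimal prime $\pp$ of $\aa$ with $\hgt(\pp)=\hgt(\aa)$. Since $\pp\in\Ass(\aa)$, one has $\aa^{(i)}\subseteq \aa^iR_\pp\cap R\subseteq \pp^{(i)}$, i.e.\ $\aa^{(\bullet)}\le\pp^{(\bullet)}$; hence by \Cref{fin-fil} and \Cref{basic-prop}, $\mathcal{C}^{\mm}(\aa^{\bullet})\le\mathcal{C}^{\mm}(\aa^{(\bullet)})\le\mathcal{C}^{\mm}(\pp^{(\bullet)})\le\mathcal{C}^{\pp}(\pp^{(\bullet)})$, and \Cref{sym-bound} applied to the \emph{prime} $\pp$, where big-height and height coincide, gives $\mathcal{C}^{\pp}(\pp^{(\bullet)})\le\hgt(\pp)=\hgt(\aa)$. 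Note that in the example above $\mathcal{C}^{\mm}(\bb^{(\bullet)})=1$, consistent with the corollary; the failure in your argument is only in the detour through $\mathcal{C}^{\sqrt\aa}(\sqrt{\aa}^{(\bullet)})$ and the hoped-for small-height containment.
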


\begin{proof}
 Let $\mathfrak{p}$ be a minimal prime of $\aa$ such that $\text{ht}(\mathfrak{p}) =\text{ht}(\aa).$ Note that $\aa^{\bullet} \le \aa^{(\bullet)} \le \mathfrak{p}^{(\bullet)}.$ By Proposition \ref{basic-prop} and Theorem \ref{fin-fil}, $$ \mathcal{C}^{\mathfrak{m}}(\mathfrak{a}^{\bullet})\leq \mathcal{C}^{\mathfrak{m}}(\mathfrak{a}^{({\bullet})})\leq  \mathcal{C}^{\mathfrak{m}}(\mathfrak{p}^{(\bullet)}) \le \mathcal{C}^\mathfrak{p}(\mathfrak{p}^{(\bullet)}).$$ Now, applying Proposition \ref{sym-bound}, $\mathcal{C}^\mathfrak{p}(\mathfrak{p}^{(\bullet)}) \le \text{ht}(\mathfrak{p}).$ Hence the assertion follows. 
\end{proof}

Now, we give a series of results to show that the upper bound obtain in \Cref{cor:bound-of-F-threshold} is a sharp bound. First, we see some examples:
\begin{example}
In \cite[Section 6]{AJL21}, the authors studied various types of  determinantal ideals in prime characteristic $p>0$. They proved that  if $\aa$ is an ideal of minors of a generic matrix or an ideal of minors of a symmetric matrix or an ideal of Pfaffians of a skew symmetric matrix or an ideal of minors of a Hankel matrix, then $\aa$ is symbolic $F$-split for $p\gg 0$, for more details see \cite[Theorem 6.5, Theorem 6.13, Theorem 6.22, Theorem 6.32]{AJL21}. The main strategy used to prove their results is existence of a homogeneous polynomial $f\in \aa^{(\hgt(\aa))}$ and $f^{p-1} \not\in \mm^{[p]}$, see proofs in \cite[Section 6]{AJL21} for more details.  By the construction of the polynomial $f$ in \cite{AJL21}, it is easy to show that, for every $e>0$, $f^{p^e-1} \not\in \mm^{[p^e]}.$ Now, for each $e$, $f^{p^e-1} \in \left(\aa^{(\hgt(\aa))}\right)^{p^e-1} \subseteq \aa^{((p^e-1)\hgt(\aa))}$ and $f^{p^e-1} \not\in \mm^{[p^e]}.$ Consequently, $\aa^{((p^e-1)\hgt(\aa))} \not\subseteq \mm^{[p^e]}$ for all $e$, hence  $\hgt(a)(p^e-1) \le \nu_{\aa^{(\bullet)}}^{\mathfrak{m}}(p^e)$  for all $e> 0$.  Now, applying limit $e$ tends to infinity to $\frac{\nu_{{\aa^{(\bullet)}}}^{\mathfrak{m}}(p^e) }{p^e}$, we get  $C^{\mathfrak{m}}({\aa^{(\bullet)}})\ge \text{ht}(\aa).$  Thus 
from Corollary \ref{cor:bound-of-F-threshold}, $C^{\mathfrak{m}}({\aa^{(\bullet)}})=\text{ht}(\aa).$
\end{example}

{Next we show that $F$-thresholds and symbolic $F$-thresholds of $F$-K\"onig ideals are heights of that ideals. Before that we recall the definition of  $F$-K\"onig ideals.}
\begin{definition}\cite[Definition 5.14]{AJL21}
Let $(R,\mm, \mathbb{K})$ be a regular ring with \Cref{setup1} and let $\aa$ be  a non-zero proper ideal of $R$ with  $\text{ht}(\aa)=h$. Then, $\aa$ is said to be $F$-K\"onig if there exists a regular sequence $f_1 , \cdots, f_h$ in $\aa $ such that $R/(f_1 ,\ldots, f_h)$ is $F$-split.
\end{definition}

\begin{theorem}
\label{P:F-konig}
Let $(R,\mm, \mathbb{K})$ be a regular ring with \Cref{setup1} and let $\aa$ be a $F$-K\"onig ideal in $R$.  Then,  
 $$\mathcal{C}^\mathfrak{m}(\mathfrak{a}^{\bullet})=\mathcal{C}^\mathfrak{m}(\mathfrak{a}^{(\bullet)})=\text{ht}(\aa).$$
\end{theorem}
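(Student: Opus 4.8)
The plan is to sandwich $\mathcal{C}^{\mathfrak m}(\aa^\bullet)$ between $\hgt(\aa)$ from below and $\mathcal{C}^{\mathfrak m}(\aa^{(\bullet)})$ from above, using the chain $\aa^\bullet \le \aa^{(\bullet)}$ together with \Cref{cor:bound-of-F-threshold}, which already gives $\mathcal{C}^{\mathfrak m}(\aa^\bullet)\le \mathcal{C}^{\mathfrak m}(\aa^{(\bullet)})\le \hgt(\aa)$. So the entire content is the reverse inequality $\mathcal{C}^{\mathfrak m}(\aa^\bullet)\ge \hgt(\aa)$, and in fact it suffices to produce, for each $e>0$, a power of a single element of $\aa^{p^e-1}$ that avoids $\mathfrak{m}^{[p^e]}$; this forces $\nu_{\aa^\bullet}^{\mathfrak m}(p^e)\ge \hgt(\aa)(p^e-1)$, whence dividing by $p^e$ and letting $e\to\infty$ gives $\mathcal{C}^{\mathfrak m}(\aa^\bullet)\ge \hgt(\aa)$. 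Since $\aa^\bullet\le\aa^{(\bullet)}$ the same bound passes to $\mathcal{C}^{\mathfrak m}(\aa^{(\bullet)})$, and all three quantities coincide.

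The element is supplied by the $F$-K\"onig hypothesis. Let $h=\hgt(\aa)$ and let $f_1,\dots,f_h\in\aa$ be a regular sequence with $S:=R/(f_1,\dots,f_h)$ $F$-split. Set $f=f_1\cdots f_h\in\aa^h$; then $f^{p^e-1}\in\aa^{h(p^e-1)}\subseteq \aa^{p^e-1}$ — wait, more precisely $f^{p^e-1}\in \bigl(\aa^{h}\bigr)^{p^e-1}=\aa^{h(p^e-1)}$, and I want to compare this with $\nu_{\aa^\bullet}^{\mathfrak m}(p^e)$, so the correct statement is $f^{p^e-1}\in\aa^{h(p^e-1)}$ and I must show $f^{p^e-1}\notin\mathfrak m^{[p^e]}$. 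The key step is: because $S$ is $F$-split and regular base $R$ has a free module structure issue aside, $F$-splitness of the complete intersection $R/(f_1,\dots,f_h)$ is equivalent (by Fedder's criterion, or by the Discussion around \cite[Section 5-6]{AJL21}) to $(f_1\cdots f_h)^{p-1}=f^{p-1}\notin\mathfrak m^{[p]}$; and then a standard iteration (raising to successive Frobenius powers and using $\bigl(\mathfrak m^{[p]}\bigr)^{[p^{e-1}]}=\mathfrak m^{[p^e]}$ together with $f^{p^e-1}=f^{(p-1)}\cdot (f^{p-1})^{p}\cdots$, i.e. $f^{p^e-1}=\prod_{i=0}^{e-1} (f^{p-1})^{p^i}$) upgrades this to $f^{p^e-1}\notin\mathfrak m^{[p^e]}$ for every $e>0$. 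This is precisely the mechanism used in the determinantal Example preceding the theorem, where the auxiliary polynomial is built from a regular sequence in $\aa^{(\hgt\aa)}$; here the regular sequence sits in $\aa$ itself, which only makes the containment $f^{p^e-1}\in\aa^{h(p^e-1)}$ easier.

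With $f^{p^e-1}\in\aa^{h(p^e-1)}\setminus\mathfrak m^{[p^e]}$ established, we get $\aa^{h(p^e-1)}\not\subseteq\mathfrak m^{[p^e]}$, hence $\nu_{\aa^\bullet}^{\mathfrak m}(p^e)\ge h(p^e-1)$ for all $e>0$, so
$$\mathcal{C}^{\mathfrak m}(\aa^\bullet)=\lim_{e\to\infty}\frac{\nu_{\aa^\bullet}^{\mathfrak m}(p^e)}{p^e}\ \ge\ \lim_{e\to\infty}\frac{h(p^e-1)}{p^e}=h=\hgt(\aa),$$
the existence of the limit coming from \Cref{F-threshold-*-filtration} (the ordinary power filtration is $(\mathfrak m,p)$-admissible, cf. the Example after that theorem). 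Combined with $\mathcal{C}^{\mathfrak m}(\aa^\bullet)\le\mathcal{C}^{\mathfrak m}(\aa^{(\bullet)})\le\hgt(\aa)$ from \Cref{cor:bound-of-F-threshold}, all three equal $\hgt(\aa)$.

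The main obstacle I anticipate is the Frobenius-iteration step: justifying cleanly that $F$-splitness of $R/(f_1,\dots,f_h)$ yields $f^{p^e-1}\notin\mathfrak m^{[p^e]}$ for \emph{all} $e$, not just $e=1$. The cleanest route is Fedder-type: $R/(\underline f)$ is $F$-split iff $\bigl((\underline f)^{[p]}:(\underline f)\bigr)\not\subseteq\mathfrak m^{[p]}$, and since $(\underline f)$ is generated by a regular sequence this colon ideal is $(\underline f)^{[p]}+(f^{p-1})$, so $F$-splitness is exactly $f^{p-1}\notin\mathfrak m^{[p]}$; iterating $e$ times (equivalently, $R/(\underline f)$ $F$-split $\Rightarrow$ $F$-pure $\Rightarrow$ $R/(\underline f)$ is $F$-split with respect to $F^e$, which translates back to $f^{p^e-1}\notin\mathfrak m^{[p^e]}$) finishes it. One must be slightly careful that $f_1,\dots,f_h$ remains a regular sequence and $\hgt$ is unchanged in the graded setting of \Cref{setup1}, but both are immediate.
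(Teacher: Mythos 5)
Your proposal is correct and follows essentially the same route as the paper: take the regular sequence $f_1,\dots,f_h$ in $\aa$, use Fedder's criterion to get $f_1^{p^e-1}\cdots f_h^{p^e-1}\notin\mm^{[p^e]}$ for all $e$, deduce $\nu_{\aa^\bullet}^{\mm}(p^e)\ge h(p^e-1)$, and combine with \Cref{cor:bound-of-F-threshold} for the upper bound. The only difference is that you spell out the Fedder iteration to all $e$, which the paper leaves to the citation.
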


\begin{proof}
Since $\aa$ is an $F$-K\"onig ideal in $R$ with $\text{ht}(\aa) =h$, there exists a regular sequence $f_1 , \cdots, f_h$ in $\aa $ such that $R/(f_1 ,\ldots, f_h)$ is $F$-pure.  Then by \cite{Fedder}, $f_1^{p^e-1} \cdots f_h^{p^e-1} \not\in \mathfrak{m}^{[p^e]}.$ 
  
 Note that $f_1^{p^e-1} \cdots f_h^{p^e-1} \in \aa^{h(p^e-1)}$ for all $ e \ge 0.$ Thus, $\aa^{h(p^e-1)} \not\subseteq \mathfrak{m}^{[p^e]}$, and hence,  $h(p^e-1) \le \nu_{\aa^{\bullet}}^{\mathfrak{m}}(p^e)$  for all $e \ge 0$.  Applying limit $e$ tends to infinity to $\frac{\nu_{{\aa^{\bullet}}}^{\mathfrak{m}}(p^e) }{p^e}$, we get  $C^{\mathfrak{m}}({\aa^{\bullet}})\ge \text{ht}(\aa).$ Now the rest follows from Corollary \ref{cor:bound-of-F-threshold}.
\end{proof}

As an example, we obtain $F$-thresholds and symbolic $F$-thresholds of binomial edge ideals of traceable graphs.
\begin{example}\label{traceable}
 Let $G$ be a traceable {graph} on the vertex set ${\{1,\cdots,n\}}$. Consider  the binomial edge ideal $\mathcal{J}_G$ of $G$ in $R=\mathbb{K}[x_1,\ldots,x_n,y_1,\ldots,y_n]$, where $\mathbb{K}$ is an $F$-finite field of prime characteristic $p>0$. It follows from \cite{Adam23} that $\mathcal{J}_G$ is $F$-K\"onig, and by \cite[Proposition 5.2]{Oh11}$, \text{ht} (\mathcal{J}_G) =n-1$. Therefore, by  \Cref{P:F-konig}, $\C^{\mm}(\mathcal{J}_G^{\bullet}) =\C^{\mm}(\mathcal{J}_G^{(\bullet)}) =n-1.$  
\end{example}

\begin{example}
 Let $A$ and $B$ be two generic matrices of size $2\times 2$  or $3\times 3$ with entries in disjoint sets of variables. Let $J$ be the ideal generated by the entries of $AB-BA$ and $I$ the ideal generated by the off-diagonal entries of this matrix. Then the ideals $I$ and $J$ are  unmixed  and $F$-K\"{o}nig \cite{K17}, therefore symbolic $F$-split (\cite[Proposition 5.15]{AJL21}). Thus, by  \Cref{P:F-konig}, 
  $\C^{\mm}(I^{\bullet}) =\C^{\mm}(I^{(\bullet)}) = \hgt I$ and $\C^{\mm}(J^{\bullet}) =\C^{\mm}(J^{(\bullet)}) = \hgt J$
\end{example}

\begin{example}
{Let $k,l$ be positive integers. Let $\aa=(x^k-y, x^l-z) \subseteq \mathbb{K}[x,y,z]$ be the defining ideal of the curve $\mathbb{K}[t,t^k,t^l]$, where $\mathbb{K}$ is an $F$-finite field of positive characteristic $p$. Note that $x^k-y, x^l-z$ is a regular sequence in $\mathbb{K}[x,y,z]$ and $\mathbb{K}[x,y,z]/ \aa$ is $F$-pure. Thus, $\aa$ is $F $-K\"onig, and hence, by \Cref{P:F-konig}, 
 $$C^\mathfrak{m}(\mathfrak{a}^{\bullet})=C^\mathfrak{m}(\mathfrak{a}^{(\bullet)})=2.$$}
\end{example}

 So far, we have seen two families of ideals that have symbolic $F$-threshold same as height. In Section \ref{sec-hom-fil}, we study  symbolic $F$-threshold of monomial ideals with respect to monomial maximal ideal $\mm$ in more details. We will prove  that if $\aa$ is a  square-free monomial ideal, then its symbolic $F$-threshold is $\hgt (\aa)$.  However, this is not the case for every non-zero proper homogeneous ideal in $R$, i.e., the upper bound for symbolic $F$-threshold with respect to  $\mm$ is not always achieved. We illustrate this in the following examples. These examples might be known to experts but we include the proofs for the sake of the readers.
  \begin{example}\label{non-exam}
Let  $R=\mathbb{K}[x_1,\ldots,x_n]$, where $\mathbb{K}$ is a field of prime characteristic $p>0$.
\par $(1)$ Assume that $n$ is an even integer. Let $\aa=(x_1,\ldots,x_n)^{\frac{n}{2}}$. Then, it follows from \Cref{alpha} that $\C^{\mm}(\aa^{\bullet})=2$. Observe that $\aa^{(i)} =\aa^i$ for all $i \ge 0$. Therefore,  $\C^{\mm}(\aa^{(\bullet)})< \hgt(\aa)$ if $n \ge 3$.
\par $(2)$  Let $a_1, \ldots,a_n$ be positive integers, and let $\aa=(x_1^{a_1}, \cdots, x_n^{a_n})$. We claim that 
$\C^{\mathfrak{m}}(\aa^{(\bullet)})=\frac{1}{a_1}+\cdots+\frac{1}{a_n}$. Since $\mm$ is associated prime of $\aa$, by \cite[Lemma 2.2]{HJKN2023}, $\aa^{(i)}=\aa^i$ for all $i$.  By Lemma \ref{tech-con},  $\nu_{\aa^{(\bullet)}}^{\mathfrak{m}}(p^e) = \max \{ r : x_{1}^{p^e-1}\cdots x_{n}^{p^e-1} \in \aa^r\}.$  Let $r$ be a maximal positive integer such that $x_{1}^{p^e-1}\cdots x_{n}^{p^e-1} \in \aa^r$. Then, there exist non-negative integers $r_1,\ldots,r_n$ such that $\sum_{i=1}^n r_i =r$ and $x_1^{a_1r_1}\cdots x_n^{a_nr_n}$ divides $x_{1}^{p^e-1}\cdots x_{n}^{p^e-1}$. Therefore, $a_ir_i \le p^e-1$ for all $ 1 \le i \le n$ which implies that $r=\sum_{i=1}^n r_i \le (p^e-1)\sum_{i=1}^n \frac{1}{a_i}$. Thus, $\nu_{\aa^{(\bullet)}}^{\mathfrak{m}}(p^e) =\lfloor (p^e-1)\sum_{i=1}^n \frac{1}{a_i}\rfloor$ for all $e$. Consequently, $\C^{\mathfrak{m}}(\aa^{(\bullet)}) = \sum_{i=1}^n \frac{1}{a_i}$. Now, if  $a_i\ge 2$ for some $i$, then $\C^{\mathfrak{m}}(\aa^{(\bullet)}) = \sum_{i=1}^n \frac{1}{a_i}< \text{ht}(\aa)$.

\par $(3)$ Let $\aa$ be an ideal as in part $(2)$ with $a_i \ge n+1$ for all $i$, and  let $\bb=\aa+(x_1\cdots x_n)$. We claim that $\C^{\mm}(\bb^{(\bullet)})=1.$ Since $\mm$ is associated prime of $\bb$, by \cite[Lemma 2.2]{HJKN2023}, $\bb^{(i)}=\bb^i$ for all $i$. Since $\aa_i \ge n+1$ for all $i$, $(x_1\cdots x_n)^r$ is element of minimum degree in $\bb^r$.  Let $r$ be a maximal positive integer such that $x_{1}^{p^e-1}\cdots x_{n}^{p^e-1} \in \bb^r$. Then, there exist non-negative integers $r_1,\ldots,r_n, r_{n+1}$ such that $\sum_{i=1}^{n+1} r_i =r$ and $x_1^{a_1r_1+r_{n+1}}\cdots x_n^{a_nr_n+r_{n+1}}$ divides $x_{1}^{p^e-1}\cdots x_{n}^{p^e-1}$. Therefore, $a_ir_i \le p^e-1-r_{n+1}$ for all $ 1 \le i \le n$ which implies that $r=\sum_{i=1}^{n+1} r_i \le r_{n+1}+ (p^e-1-r_{n+1})\left(\sum_{i=1}^n \frac{1}{a_i} \right) \le p^e-1$. Thus, $\nu_{\aa^{(\bullet)}}^{\mathfrak{m}}(p^e) =p^e-1$ for all $e$. Now, the rest  follows from the definition.
\end{example}

We now characterize radical ideals whose symbolic $F$-thresholds are $\text{big-height}$ of that ideals in terms of ideal containment. 
\begin{theorem}\label{big-height}
Let $I$ and $\aa$ be non-zero proper radical ideals of $R$ with $\aa \subseteq I$ and $\text{big-height}(\aa)=H$.  Then, we have the followings:  \begin{enumerate} \item if $\aa^{(H(p^e-1))} \subseteq I^{[p^e]}$, then $\mathcal{C}^{I}(\aa^{(\bullet)})\le H-\frac{1}{p^e};$ 
\item $\mathcal{C}^{I}(\aa^{(\bullet)})=H$ if and only if $\aa^{(H(p^e-1))} \not\subseteq I^{[p^e]}$ for all  $e$;
\item if $(R,\mm, \mathbb{K})$ is an $F$-finite regular ring with \Cref{setup1} and $\mathcal{C}^{\mathfrak{m}}(\aa^{(\bullet)})=H$, then $\aa$ is unmixed and symbolic $F$-split. Moreover, in this case, $R/\aa$ is equi-dimensional and $F$-split. 
\end{enumerate}  
\end{theorem}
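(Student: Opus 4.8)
The plan is to prove the three parts in order, using the characterization of $\nu^I_{\aa^{(\bullet)}}(p^e)$ together with the containment $\aa^{(Hp^e)}\subseteq \aa^{[p^e]}$ from \cite[Lemma 2.4]{HH02} and the multiplicative property $\aa^{(a)}\aa^{(b)}\subseteq \aa^{(a+b)}$ of symbolic powers. For part $(1)$, suppose $\aa^{(H(p^e-1))}\subseteq I^{[p^e]}$ for one fixed $e$; I would first note (as in the proof of Proposition \ref{sym-bound}) that since $R$ is regular, $I^{[p^e]}$ is Frobenius closed, so Proposition \ref{basic-prop}$(1)$ applies and $\mathcal{C}^I(\aa^{(\bullet)})=\sup_{f\ge 0}\frac{\nu^I_{\aa^{(\bullet)}}(p^f)}{p^f}$; actually the cleanest route is to show directly that $\aa^{(H(p^f-1))}\subseteq I^{[p^f]}$ for \emph{all} $f\ge e$, by raising the given containment to the $p^{f-e}$-th Frobenius power and using $\left(\aa^{(r)}\right)^{[p^{f-e}]}\subseteq \aa^{(rp^{f-e})}$ together with $rp^{f-e}=H(p^e-1)p^{f-e}\ge H(p^f-1)$ when... wait, that last inequality needs care since $H(p^e-1)p^{f-e}=H(p^f-p^{f-e})\le H(p^f-1)$, so one actually uses monotonicity the other way via $\aa^{(H(p^f-1))}\subseteq \aa^{(H(p^e-1)p^{f-e})}=\left(\aa^{(H(p^e-1))}\right)^{[p^{f-e}]}\subseteq (I^{[p^e]})^{[p^{f-e}]}=I^{[p^f]}$. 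Hence $\nu^I_{\aa^{(\bullet)}}(p^f)\le H(p^f-1)-1$ for all $f\ge e$, giving $\frac{\nu^I_{\aa^{(\bullet)}}(p^f)}{p^f}\le H-\frac{H}{p^f}-\frac{1}{p^f}\le H-\frac{1}{p^e}$ for $f\ge e$; since $\mathcal{C}^I(\aa^{(\bullet)})$ is the limit, the bound follows.

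For part $(2)$: the forward direction is the contrapositive of part $(1)$ — if $\aa^{(H(p^e-1))}\subseteq I^{[p^e]}$ for some $e$, then $\mathcal{C}^I(\aa^{(\bullet)})\le H-\frac1{p^e}<H$. For the reverse direction, assume $\aa^{(H(p^e-1))}\not\subseteq I^{[p^e]}$ for all $e$; then $\nu^I_{\aa^{(\bullet)}}(p^e)\ge H(p^e-1)$ for all $e$, so $\frac{\nu^I_{\aa^{(\bullet)}}(p^e)}{p^e}\ge H-\frac{H}{p^e}\to H$, giving $\mathcal{C}^I(\aa^{(\bullet)})\ge H$; combined with $\mathcal{C}^I(\aa^{(\bullet)})\le H$ from Proposition \ref{sym-bound1} (here $\sqrt\aa=\aa$ since $\aa$ is radical) we get equality. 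Note $\mathcal{C}^I(\aa^{(\bullet)})$ exists by Theorem \ref{exist}.

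For part $(3)$, take $I=\mm$. Assuming $\mathcal{C}^{\mm}(\aa^{(\bullet)})=H$, part $(2)$ gives $\aa^{(H(p^e-1))}\not\subseteq \mm^{[p^e]}$ for all $e$, so for each $e$ there is a homogeneous element $g_e\in \aa^{(H(p^e-1))}$ with $g_e\notin\mm^{[p^e]}$; in the local case one picks any element outside $\mm^{[p^e]}$. I expect this to be the main obstacle: extracting from these containments both the \emph{unmixedness} of $\aa$ and a \emph{symbolic $F$-splitting}. For symbolic $F$-splitting, recall the criterion (Definition, item (5)) requires, for each $n$, a splitting $\phi:R^{1/p}\to R$ with $\phi\big((\aa^{(np+1)})^{1/p}\big)\subseteq \aa^{(n+1)}$; the standard dictionary (Fedder-type, as used throughout \cite{AJL21}) translates $g\notin\mm^{[p]}$ into the existence of a splitting $R^{1/p}\to R$ built from $g^{1/p}$, and one must check that multiplying by an element of $\aa^{(nH\cdot\text{something})}$ lands symbolic powers into symbolic powers — this is where the precise exponent $H(p^e-1)$ is engineered to match the shape $\aa^{(np+1)}\rightsquigarrow\aa^{(n+1)}$; I would mirror the computation in \cite[Section 6]{AJL21} and \cite[Lemma 2.6]{GH17}. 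For unmixedness: if $\aa$ had an embedded or higher-dimensional-than-expected associated prime behavior forcing $\mathrm{big\text{-}height}(\aa)$ to be witnessed only on a component of smaller codimension... actually unmixedness should follow because symbolic $F$-splitness of a radical ideal forces $R/\aa$ to be reduced and the splitting respects the minimal primes, combined with the fact that $\mathcal{C}^{\mm}(\aa^{(\bullet)})=H=\mathrm{big\text{-}height}(\aa)$ together with $\mathcal{C}^{\mm}(\aa^{(\bullet)})\le \mathrm{ht}(\aa)$ from Corollary \ref{cor:bound-of-F-threshold} forces $\mathrm{ht}(\aa)=\mathrm{big\text{-}height}(\aa)$, i.e.\ $\aa$ is unmixed; then equi-dimensionality of $R/\aa$ is immediate and $F$-splitness of $R/\aa$ follows from symbolic $F$-splitness via Theorem (item 4) of the summary. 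The cleanest organization is therefore: first deduce $\mathrm{ht}(\aa)=\mathrm{big\text{-}height}(\aa)$ (unmixedness) from the two bounds, then construct the symbolic $F$-splittings from the elements $g_e$, then quote the implication symbolic $F$-split $\Rightarrow$ $F$-split to conclude $R/\aa$ is $F$-split and equi-dimensional.
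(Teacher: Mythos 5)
You have the right skeleton, but part $(1)$ — on which the forward direction of $(2)$ and all of $(3)$ rest — contains a genuine error. Your chain hinges on the identity $\aa^{(H(p^e-1)p^{f-e})}=\bigl(\aa^{(H(p^e-1))}\bigr)^{[p^{f-e}]}$, and symbolic powers simply do not satisfy this: the only containment valid in general is the easy one, $\bigl(\aa^{(r)}\bigr)^{[q]}\subseteq\bigl(\aa^{(r)}\bigr)^{q}\subseteq\aa^{(rq)}$, which points the wrong way for your argument. Placing a symbolic power inside a Frobenius power of a symbolic power is exactly the nontrivial Hochster--Huneke/Grifo--Huneke type containment and costs a correction by the big-height: \cite[Lemma 2.6]{GH17} gives $\aa^{((H+m)q-H+1)}\subseteq\bigl(\aa^{(m+1)}\bigr)^{[q]}$. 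The paper's proof applies this with $m+1=H(p^{e_0}-1)$, so that $\aa^{(Hp^e+kp^e-H+1)}\subseteq\bigl(\aa^{(H(p^{e_0}-1))}\bigr)^{[p^e]}\subseteq I^{[p^{e_0+e}]}$ with $k=H(p^{e_0}-1)-1$, hence $\nu^{I}_{\aa^{(\bullet)}}(p^{e_0+e})\le Hp^{e_0+e}-p^{e}-H$; the extra term $-p^{e}$ is what survives after dividing by $p^{e_0+e}$ and yields the strict deficit $-\tfrac{1}{p^{e_0}}$ in the limit. Note that your route fails even if your containment were granted: it would only give $\nu^{I}_{\aa^{(\bullet)}}(p^f)\le H(p^f-1)-1$, and $\tfrac{H(p^f-1)-1}{p^f}=H-\tfrac{H+1}{p^f}\to H$, so your asserted inequality $H-\tfrac{H+1}{p^f}\le H-\tfrac{1}{p^e}$ is false for $f\gg e$, and in the limit you recover only the already-known bound $\C^{I}(\aa^{(\bullet)})\le H$, not $H-\tfrac{1}{p^e}$.

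Parts $(2)$ and $(3)$ are essentially right once $(1)$ is repaired: your reverse direction of $(2)$ (lower bound from $\nu\ge H(p^e-1)$ plus the upper bound $\le H$) matches the paper, and in $(3)$ your deduction of unmixedness from $H=\C^{\mm}(\aa^{(\bullet)})\le\hgt(\aa)$ via Corollary \ref{cor:bound-of-F-threshold}, and of $F$-splitness of $R/\aa$ from symbolic $F$-splitness, is exactly the paper's argument. However, you leave the symbolic $F$-split step as a sketch (``mirror the computation in \cite{AJL21}''); no new splitting construction is needed, since by $(2)$ with $e=1$ one has $\aa^{(H(p-1))}\not\subseteq\mm^{[p]}$, and \cite[Corollary 5.10]{AJL21} is precisely the Fedder-type criterion stating that this single noncontainment already implies $\aa$ is symbolic $F$-split.
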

\begin{proof}
(1) Suppose that $\aa^{(H(p^{e_0}-1))} \subseteq I^{[p^{e_0}]}$ for some $e_0$. Take $k=H(p^{e_0}-1)-1$. By \cite[Lemma 2.6]{GH17}, $\aa^{(Hp^e+kp^e-H+1)} \subseteq \left( \aa^{(k+1)}\right)^{[p^e]}$ for all $e$. Thus, for all $e$, $\aa^{(Hp^e+kp^e-H+1)} \subseteq \left( \aa^{(k+1)}\right)^{[p^e]} =\left( \aa^{(H(p^{e_0}-1))}\right)^{[p^e]}\subseteq \left(I^{[p^{e_0}]}\right)^{[p^e]}=I^{[p^{e_0+e}]}. $ Consequently, we have $\nu_{\aa^{(\bullet)}}^{I}(p^{e_0+e}) \le Hp^e+kp^e-H$ which implies that $\frac{\nu_{\aa^{(\bullet)}}^{I}(p^{e_0+e})}{p^{e_0+e}} \le \frac{Hp^e+kp^e-H}{p^{e_0+e}}= \frac{Hp^{e_0+e}-p^e-H}{p^{e_0+e}}=H-\frac{1}{p^{e_0}}-\frac{H}{p^{e_0+e}} $ for all $e$.  Now, applying limit on both side, we get $\C^{I}(\aa^{(\bullet)}) \le H-\frac{1}{p^{e_0}}.$

\par (2) If $\C^{I}(\aa^{(\bullet)}) =H$, then it follows from $(1)$ that $\aa^{(H(p^e-1))} \not\subseteq I^{[p^e]}$ for all  $e$. Conversely, assume that $\aa^{(H(p^e-1))} \not\subseteq I^{[p^e]}$ for all  $e$. Then, $\nu_{\aa^{(\bullet)}}^{I}(p^{e}) \ge H(p^{e}-1)$ for all $e$, and hence, $\C^{I}(\aa^{(\bullet)}) \ge H.$ Now, by  \Cref{sym-bound}, $\C^{I}(\aa^{(\bullet)}) = H.$

\par (3) From $(2)$, we get $\aa^{(H(p-1))} \not\subseteq \mm^{[p]}.$ Thus, by \cite[Corollary 5.10]{AJL21}, $\aa$ is symbolic $F$-split, and hence, by \cite[Remark 4.4]{AJL21}, $R/\aa$ is $F$-split. Also, by \Cref{cor:bound-of-F-threshold},  $H=\mathcal{C}^{\mathfrak{m}}(\mathfrak{a}^{({\bullet})})\leq  \text{ht} (\aa)$ which implies that $\aa$ is an unmixed ideal.  Hence, the assertion follows. 
\end{proof}

\begin{corollary}\label{not-F-pure}
Let $(R,\mm, \mathbb{K})$ be a  $F$-finite regular ring with \Cref{setup1}. Let $\aa$ be a non-zero proper radical ideal of $R$ with $\text{big-height}(\aa)=H$ such that $R/\aa$ is not $F$-split. Then, $\mathcal{C}^{\mathfrak{m}}(\aa^{(\bullet)})\le H-\frac{1}{p}$.
\end{corollary}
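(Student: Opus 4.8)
The plan is to deduce this corollary directly from \Cref{big-height} by contraposition. Suppose, for contradiction, that $\mathcal{C}^{\mathfrak{m}}(\aa^{(\bullet)}) > H - \frac{1}{p}$. Since $\aa$ is a radical ideal in the regular ring $R$, \Cref{exist} guarantees that $\mathcal{C}^{\mathfrak{m}}(\aa^{(\bullet)})$ exists, and \Cref{cor:bound-of-F-threshold} gives $\mathcal{C}^{\mathfrak{m}}(\aa^{(\bullet)}) \le \text{ht}(\aa) \le H$. So I would have $H - \frac{1}{p} < \mathcal{C}^{\mathfrak{m}}(\aa^{(\bullet)}) \le H$. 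The first thing to check is that this squeeze actually forces $\mathcal{C}^{\mathfrak{m}}(\aa^{(\bullet)}) = H$: applying \Cref{big-height}(1) with $e = 1$ and $I = \mm$, if $\aa^{(H(p-1))} \subseteq \mm^{[p]}$ then $\mathcal{C}^{\mathfrak{m}}(\aa^{(\bullet)}) \le H - \frac{1}{p}$, contradicting our assumption. Hence $\aa^{(H(p-1))} \not\subseteq \mm^{[p]}$, and then by \Cref{big-height}(2) (with $I = \mm$) we get $\mathcal{C}^{\mathfrak{m}}(\aa^{(\bullet)}) = H$ — but actually we don't even need (2): the point is simply that the hypothesis of (1) fails for $e=1$.

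Next I would invoke \Cref{big-height}(3): since $\mathcal{C}^{\mathfrak{m}}(\aa^{(\bullet)}) = H$ in an $F$-finite regular ring satisfying \Cref{setup1}, part (3) tells us $\aa$ is symbolic $F$-split, and moreover $R/\aa$ is $F$-split. This directly contradicts the hypothesis that $R/\aa$ is not $F$-split. Therefore the assumption $\mathcal{C}^{\mathfrak{m}}(\aa^{(\bullet)}) > H - \frac{1}{p}$ is untenable, and we conclude $\mathcal{C}^{\mathfrak{m}}(\aa^{(\bullet)}) \le H - \frac{1}{p}$, as claimed.

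Alternatively — and perhaps more cleanly for the write-up — I would go contrapositively through the $e=1$ case of \Cref{big-height}(1) together with the $F$-splitting criterion. If $R/\aa$ is not $F$-split, then $\aa$ is not symbolic $F$-split (every symbolic $F$-split ideal is $F$-split, by the item in the Theorem after the definitions, or \cite[Remark 4.4]{AJL21}), so by \cite[Corollary 5.10]{AJL21} we must have $\aa^{(H(p-1))} \subseteq \mm^{[p]}$. Applying \Cref{big-height}(1) with $e_0 = 1$ and $I = \mm$ then immediately yields $\mathcal{C}^{\mathfrak{m}}(\aa^{(\bullet)}) \le H - \frac{1}{p}$. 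This route avoids the detour through establishing $\mathcal{C}^{\mathfrak{m}}(\aa^{(\bullet)}) = H$ and is essentially a one-line consequence of the machinery already in place. I do not anticipate a genuine obstacle here — the corollary is a packaging of \Cref{big-height}, and the only care needed is making sure the hypotheses of \Cref{big-height}(1) (that $\aa$ is a non-zero proper radical ideal, $\aa \subseteq \mm$, and $\text{big-height}(\aa) = H$) are all in force, which they are by assumption together with $\aa$ being proper.
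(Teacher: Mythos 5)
Your second (``cleaner'') route is exactly the paper's proof: since $R/\aa$ is not $F$-split, $\aa$ is not symbolic $F$-split by \cite[Remark 4.4]{AJL21}, hence $\aa^{(H(p-1))}\subseteq \mm^{[p]}$ by \cite[Corollary 5.10]{AJL21}, and \Cref{big-height}(1) applied with $e=1$ and $I=\mm$ gives $\mathcal{C}^{\mm}(\aa^{(\bullet)})\le H-\frac{1}{p}$. Only note that your first, contradiction-based route has a small slip as written --- \Cref{big-height}(2) needs non-containment for \emph{all} $e$, and (3) takes $\mathcal{C}^{\mm}(\aa^{(\bullet)})=H$ as its hypothesis, so from non-containment at $e=1$ alone you should instead cite \cite[Corollary 5.10]{AJL21} directly (which is precisely what the proof of (3) does) --- but since you ultimately adopt the second route, the argument is correct and matches the paper.
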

\begin{proof}{Since $R/\aa$ is not $F$-split, by \cite[Remark 4.4]{AJL21}, $\aa$ is not symbolic $F$-split. Therefore, by \cite[Corollary 5.10]{AJL21}, $\aa^{(H(p-1))}  \subseteq \mm^{[p]}.$ Hence, by \Cref{big-height}, the assertion follows.}
\end{proof}
 
Big-height of ideal is needed in \Cref{not-F-pure} as there are ideals which are not symbolic $F$-split but have height as its symbolic $F$-threshold.  We illustrate in the following: 
\begin{example}
Let $G$ be either a cycle of length $5$ or a biclaw tree and $\mathbb{K}$ be a field of characteristic $2$. It follows from \cite{Adam23} that  $\mathcal{J}_G$ is $F$-K\"onig, and therefore, by \Cref{P:F-konig}, $\C^{\mm}(\mathcal{J}_G^{(\bullet)})=\hgt(\mathcal{J}_G)$. However, it follows from \cite{Mat} that $R/\mathcal{J}_G$ is not $F$-split.  Hence, by \cite[Remark 4.4]{AJL21}, $\mathcal{J}_G$ is not symbolic $F$-split. 
\end{example}

We have seen in \Cref{big-height} that if $\C^{\mm}(\aa^{(\bullet)}) =H$, then $R/\aa$ is equi-dimensional and $F$-split, and in \Cref{not-F-pure} that if $R/\aa$ is not $F$-split, then $\mathcal{C}^{\mathfrak{m}}(\aa^{(\bullet)})\le H-\frac{1}{p}$. So, it is natural to ask whether $\C^{\mm}(\aa^{(\bullet)}) =H$ if and only if $R/\aa$ is $F$-split. In the following, we illustrate that this is not even true when $R/\aa$ is strongly $F$-regular.

\begin{example} Let $\mathbb{K}$ be a perfect field of prime characteristic $p \ge 3$.  Let $R=\mathbb{K}[a,b,c,d]$, $\mm=(a,b,c,d)$.
Let $\aa$ be the ideal generated by $2\times 2$ minors of the following matrix:
\begin{center}
 $\begin{bmatrix}
  a^2&b&d\\
  c&a^2&b-d\\
 \end{bmatrix}
,$
\end{center} i.e., 
 $\aa= (a^4-bc,b^2-bd-a^2d, a^2b-a^2d-cd)$. It follows from 
 \cite[Proposition 4.3]{AS99} that $R/\aa$ is strongly $F$-regular, and hence, $F$-split. However, $\aa$ is  not symbolic $F$-split, by \cite[Example 5.13]{AJL21}. We claim that $\C^{\mm}(\aa^{\bullet}) = \frac{7}{4}$.  By \cite[Example 4]{GH17},  $\aa$ is a prime ideal of height two, and $\aa^s=\aa^{(s)}$ for all $s$.   
 Assume that $\deg(a)=1$ and $\deg(b)=\deg(c)=\deg(d) =2$. Let $f$ be a homogeneous element of $R$ such that $f \not\in \mm^{[p^e]}$. Therefore, there exists a monomial term, say $u=\beta a^{\alpha_1}b^{\alpha_2}c^{\alpha_3}d^{\alpha_4}$, in $f$ such that $u \not\in \mm^{[p^e]},$ where $\beta \in \mathbb{K}\setminus \{0\}.$ Consequently, $\alpha_i\le p^e-1$ for each $i$, and hence, $\deg(f)=\deg(u) =\alpha_1+2\alpha_2+2\alpha_3+2\alpha_3 \le 7(p^e-1). $ Let $s$ be positive integers such that $s \ge \frac{7p^e}{4}$.  Since $\aa$ is homogeneous ideal of $R$ generated by elements of  degree $4$, $\aa^s$ is generated by elements of degree $4s$. Thus, $\aa^s \subseteq \mm^{[p^e]}$ as $4s \ge 7p^e$ which implies that $\nu_{\aa^\bullet}^{\mm} (p^e) \le \frac{7p^e}{4}.$ Now, applying limit as $e$ tends to infinity to $\frac{\nu_{\aa^\bullet}^{\mm} (p^e)}{p^e}$, we get $\C^{\mm}(\aa^{\bullet})\le \frac{7}{4}.$ 
 
 Next, for $s=7\lfloor \frac{(p^e-1)}{4} \rfloor$, $\aa^s \not\subseteq \mm^{[p^e]}$ as $u=\beta a^{4 \lfloor \frac{(p^e-1)}{4} \rfloor} b^{4\lfloor \frac{(p^e-1)}{4} \rfloor}(cd)^{4\lfloor \frac{(p^e-1)}{4} \rfloor}$ is a monomial term in $$f=(a^4-bc)^{ \lfloor \frac{(p^e-1)}{4} \rfloor} (b^2-bd-a^2d)^{2\lfloor \frac{(p^e-1)}{4} \rfloor}(a^2b-a^2d-cd)^{4\lfloor \frac{(p^e-1)}{4} \rfloor}$$ for some $\beta \in \mathbb{K}\setminus \{0\}$ and $u \not\in \mm^{[p^e]}.$ Thus, $\nu_{\aa^\bullet}^{\mm}(p^e) \ge 7\lfloor \frac{(p^e-1)}{4} \rfloor$ for all $e$. Now, applying limit as $e$ tends to infinity to $\frac{\nu_{\aa^\bullet}^{\mm} (p^e)}{p^e}$, we get $\C^{\mm}(\aa^{\bullet})\ge \frac{7}{4}.$ Hence, $\C^{\mm}(\aa^{\bullet})= \frac{7}{4}.$
 \end{example}


\section{$F$-thresholds of  filtrations  via valuations}\label{sec-hom-fil}
In this section, we study $F$-threshold of filtration in a polynomial ring with respect to the {unique} monomial maximal ideal {via valuations}.  Throughout this section, $R=\mathbb{K}[x_1,\ldots,x_n]$ is a polynomial ring (need not be a standard graded unless otherwise stated) over a field of prime characteristic $p>0$ and $\mathfrak{m}=(x_1,\ldots,x_n)$ is the monomial maximal ideal of $R$.  Since $R$ is regular, by \Cref{basic-prop}, in order to show the existence of $F$-threshold of a filtration $\aa_\bullet$ with respect to $\mathfrak{m}$, it is enough to show that the sequence $\left\{ \frac{\nu_{\aa_\bullet}^{\mathfrak{m}}(p^e)}{p^e}\right\} $ is a bounded sequence. We provide a general upper bound for $F$-threshold of a filtration with respect to $\mm$ in terms of skew-Waldschmidt constant of that filtration. {We also provide a formula for $F$-threshold of a monomial ideal in terms of its Rees valuations. Also, we obtain symbolic F-threshold  of any square-free monomial ideal in terms of the monomial valuations associated to its minimal primes}.


Here, we give an equivalent criterion for  {the} positivity of the skew-Waldschmidt constant of a filtration with respect to a monomial valuation in terms of $F$-threshold of that filtration. 


\begin{theorem}\label{skew-Wald}
Let $\aa_\bullet$ be a filtration of  non-zero proper ideals and $v$ be a monomial valuation of $K$. If $\vhat(\aa_\bullet)>0$, then  
$$\mathcal{C}^{\mathfrak{m}}(\aa_\bullet) \leq \frac{v(x_1 \cdots x_n)}{\vhat(\aa_\bullet)}.$$ Moreover, if $v(\mm)>0$ and {$\mathcal{C}^{\mathfrak{m}}(\aa_\bullet) $ exists}, then $\vhat(\aa_\bullet)>0$.  
\end{theorem}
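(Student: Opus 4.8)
The plan is to prove the two assertions separately.

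For the inequality I would fix $e\ge 0$ and bound every $r$ for which $\aa_r\not\subseteq\mathfrak m^{[p^e]}$. Choosing $f\in\aa_r\setminus\mathfrak m^{[p^e]}$, the equality $\mathfrak m^{[p^e]}=(x_1^{p^e},\dots,x_n^{p^e})$ forces $f$ to contain a monomial term $u=x_1^{a_1}\cdots x_n^{a_n}$ with $a_i\le p^e-1$ for every $i$. Since $v$ is a monomial valuation supported on $R$, we have $v(x_i)\ge 0$ for all $i$ and $v(f)=\min\{v(w):w\text{ a monomial term of }f\}\le v(u)=\sum_i a_i v(x_i)\le (p^e-1)\,v(x_1\cdots x_n)$; while $f\in\aa_r$ gives $v(f)\ge v(\aa_r)\ge r\,\vhat(\aa_\bullet)$ by the definition of the skew-Waldschmidt constant. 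As $\vhat(\aa_\bullet)>0$, this yields $r\le (p^e-1)\,v(x_1\cdots x_n)/\vhat(\aa_\bullet)$, hence $\nu_{\aa_\bullet}^{\mathfrak m}(p^e)\le (p^e-1)\,v(x_1\cdots x_n)/\vhat(\aa_\bullet)$ for all $e$. In particular $\mathcal C^{\mathfrak m}_+(\aa_\bullet)<\infty$, so by \Cref{basic-prop}(1) (which applies since $R$ is regular, hence $\mathfrak m^{[p^e]}$ is Frobenius closed for every $e$) the limit $\mathcal C^{\mathfrak m}(\aa_\bullet)$ exists; dividing the last inequality by $p^e$ and letting $e\to\infty$ gives $\mathcal C^{\mathfrak m}(\aa_\bullet)\le v(x_1\cdots x_n)/\vhat(\aa_\bullet)$.

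For the converse I would assume $v(\mathfrak m)>0$ and that $\mathcal C^{\mathfrak m}(\aa_\bullet)$ exists, so $\mathcal C^{\mathfrak m}_+(\aa_\bullet)<\infty$. By \Cref{exis-nec} there is an integer $M>0$ with $\aa_{Mp^e}\subseteq\mathfrak m^{[p^e]}$ for all $e\gg 0$. Now $\mathfrak m^{[p^e]}\subseteq\mathfrak m^{p^e}$, and for any valuation $v(\mathfrak m^{p^e})=p^e\,v(\mathfrak m)$ (the inequality $\ge$ is immediate, and equality follows by taking the $p^e$-th power of a generator $x_i$ with $v(x_i)=v(\mathfrak m)$). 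Hence $v(\aa_{Mp^e})\ge p^e v(\mathfrak m)$, that is $v(\aa_{Mp^e})/(Mp^e)\ge v(\mathfrak m)/M$ for $e\gg 0$. Since $\vhat(\aa_\bullet)=\lim_{n\to\infty}v(\aa_n)/n$ exists by Fekete's Lemma, it equals the limit of the subsequence indexed by $n=Mp^e$, so $\vhat(\aa_\bullet)\ge v(\mathfrak m)/M>0$.

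Both computations are short; the only delicate points I foresee are the standing assumption that $v$ is supported on $R$ (implicit in the very definition of $\vhat$), which is exactly what guarantees $v(x_i)\ge 0$ and hence the first bound — this would fail for a monomial valuation carrying a negative weight — and, in the second part, the need to invoke the existence of the Fekete limit in order to promote the positive lower bound from the sparse subsequence $n=Mp^e$ to all of $\vhat(\aa_\bullet)$. I do not anticipate any real obstacle beyond this bookkeeping.
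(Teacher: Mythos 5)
Your proof is correct and follows essentially the same route as the paper: the same monomial-term argument bounding $\nu_{\aa_\bullet}^{\mathfrak m}(p^e)$ by $(p^e-1)v(x_1\cdots x_n)/\vhat(\aa_\bullet)$ together with \Cref{basic-prop}, and the same converse via \Cref{exis-nec} and $v(\aa_{Mp^e})\ge p^e v(\mm)$ along the subsequence $n=Mp^e$. The minor variations (bounding $v(u)=\sum a_i v(x_i)$ directly instead of via divisibility of $x_1^{p^e-1}\cdots x_n^{p^e-1}$, and passing through $\mm^{[p^e]}\subseteq\mm^{p^e}$) are cosmetic.
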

\begin{proof}
Assume that  $\vhat(\aa_\bullet)>0$.  Suppose that $\aa_r \not\subseteq \mathfrak{m}^{[p^e]}.$ Then, there exists  a polynomial $f \in \aa_r$ such that $f \not \in \mathfrak{m}^{[p^e]}.$ Since $f \not\in \mm^{[p^e]}$, there is a monomial term of $f$, say $u$, which is not in $\mm^{[p^e]}$. This implies that exponent of each variable in $u$ is at most $p^e-1$, and therefore, $u$ divides $x_1^{p^e-1}\cdots x_n^{p^e-1}.$ Consequently, $v(x_1^{p^e-1}\cdots x_n^{p^e-1}) \ge v(u) \ge v(f) \ge v(\aa_r)$ which implies that $(p^e-1)v(x_1\cdots x_n) \ge v(\aa_r).$  Thus, if $\aa_r \not\subseteq \mathfrak{m}^{[p^e]},$ then $r \leq \frac{r(p^e-1)v(x_1\cdots x_n)}{v(\aa_r)} \le \frac{(p^e-1)v(x_1\cdots x_n)}{\vhat(\aa_\bullet)}$ as $\vhat(\aa_\bullet) = \inf_{r \ge 1}\frac{v(\aa_r)}{r}$. Therefore, for all $e$, $\frac{\nu_{\aa_\bullet}^{\mathfrak{m}}(p^e)}{p^e} \leq \left(1-\frac{1}{p^e}\right)\frac{v(x_1\cdots x_n)}{\vhat(\aa_\bullet)}$, and hence, by \Cref{basic-prop}, $\mathcal{C}^{\mathfrak{m}}(\aa_\bullet) \leq \frac{v(x_1 \cdots x_n)}{\vhat(\aa_\bullet)}<\infty.$

 Conversely, we assume that $v(\mm)>0$ and {$\C^{\mm}(\aa_\bullet)$ exists}. Therefore, for some positive integer $M$, $\aa_{Mp^e} \subseteq \mm^{[p^e]}$ for all $e$. Consequently, $v(\aa_{Mp^e}) \ge v(\mm^{[p^e]})=p^e\min\{v(x_i) : 1 \le i \le n\}=p^ev(\mm).$ Now, $\vhat(\aa_\bullet) =\lim_{e \to \infty}\frac{v(\aa_{Mp^e})}{Mp^e} \ge \frac{v(\mm)}{M}>0.$ Hence, the assertion follows. 
\end{proof}

Next, we provide a class of filtration that attain the bound  given in \Cref{skew-Wald}. 
\begin{proposition}\label{fam-val}
Let $v_1, \ldots, v_k$ be monomial valuations of $K$, and let $m_1, \ldots,m_k$ be positive real numbers. Let $$\aa_n =(f \in R~:~ v_i(f) \ge m_i n\text{ for all } 1 \le i \le k)$$ for all $n \ge 0$.   Then, $$\C^{\mm}(\aa_\bullet) =\min_{ 1 \le i \le k} \frac{v_i(x_1 \cdots x_n)}{\vhat_i(\aa_\bullet)}=\min_{1 \le i \le k} \frac{v_i(x_1 \cdots x_n)}{m_i}.$$ Furthermore, if $v_i$'s are rational monomial valuations and $m_i$'s are rational, then $\C^{\mm}(\aa_\bullet)$ is rational.
\end{proposition}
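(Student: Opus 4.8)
The plan is to reduce the statement to an explicit computation of $\nu_{\aa_\bullet}^{\mm}(p^e)$ via \Cref{tech-con}. First I would record that $\aa_\bullet$ is a monomial filtration: since each $v_i$ is a monomial valuation, $v_i(f)=\min\{v_i(u):u\text{ a monomial term of }f\}$, so $\aa_n$ is generated by the monomials $u$ with $v_i(u)\ge m_i n$ for all $i$, and the valuation axioms (additivity on products, $v_i\ge 0$ on $R$) give $\aa_0=R$, $\aa_{n+1}\subseteq\aa_n$ and $\aa_n\aa_{n'}\subseteq\aa_{n+n'}$. Next I would dispense with the degenerate case: if some $v_i$ is trivial on $R$, i.e.\ $v_i(x_1\cdots x_n)=0$, then no monomial $u$ can satisfy $v_i(u)\ge m_i r$ for $r\ge 1$, so $\aa_r=0$ for all $r\ge 1$ and all three quantities in the statement equal $0$; hence I may assume $v_i(x_1\cdots x_n)>0$ for every $i$, in which case $\aa_\bullet$ is a filtration of non-zero proper ideals (it contains a large power of $x_1\cdots x_n$, while $1\notin\aa_r$ for $r\ge 1$).

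The core step is then \Cref{tech-con} with $I=\mm$, which gives $\nu_{\aa_\bullet}^{\mm}(p^e)=\sup\{r:x_1^{p^e-1}\cdots x_n^{p^e-1}\in\aa_r\}$. Because $v_i(x_1^{p^e-1}\cdots x_n^{p^e-1})=(p^e-1)\,v_i(x_1\cdots x_n)$, the membership $x_1^{p^e-1}\cdots x_n^{p^e-1}\in\aa_r$ holds if and only if $(p^e-1)\,v_i(x_1\cdots x_n)\ge m_i r$ for all $i$, that is, if and only if $r\le (p^e-1)\min_{1\le i\le k}\frac{v_i(x_1\cdots x_n)}{m_i}$. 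Therefore
\[
\nu_{\aa_\bullet}^{\mm}(p^e)=\left\lfloor (p^e-1)\min_{1\le i\le k}\frac{v_i(x_1\cdots x_n)}{m_i}\right\rfloor ,
\]
and dividing by $p^e$ and letting $e\to\infty$ (using $x-1\le\lfloor x\rfloor\le x$ and the Sandwich Theorem) shows that the limit exists and $\mathcal{C}^{\mm}(\aa_\bullet)=\min_{1\le i\le k}\frac{v_i(x_1\cdots x_n)}{m_i}$.

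For the first equality I would invoke \Cref{skew-Wald}. Every $f\in\aa_n$ satisfies $v_i(f)\ge m_i n$, so $v_i(\aa_n)\ge m_i n$ and hence $\vhat_i(\aa_\bullet)=\inf_n v_i(\aa_n)/n\ge m_i>0$ (and $\vhat_i(\aa_\bullet)<\infty$, since $\aa_n$ contains a power of $x_1\cdots x_n$). Applying \Cref{skew-Wald} to each $v_i$ yields $\mathcal{C}^{\mm}(\aa_\bullet)\le\frac{v_i(x_1\cdots x_n)}{\vhat_i(\aa_\bullet)}$, so $\mathcal{C}^{\mm}(\aa_\bullet)\le\min_i\frac{v_i(x_1\cdots x_n)}{\vhat_i(\aa_\bullet)}$; conversely, $\vhat_i(\aa_\bullet)\ge m_i$ forces $\min_i\frac{v_i(x_1\cdots x_n)}{\vhat_i(\aa_\bullet)}\le\min_i\frac{v_i(x_1\cdots x_n)}{m_i}=\mathcal{C}^{\mm}(\aa_\bullet)$. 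This chain of inequalities collapses all three quantities to a single value. The rationality claim is then immediate: if each $v_i$ is a rational monomial valuation then $v_i(x_1\cdots x_n)=\sum_\ell v_i(x_\ell)\in\QQ$, so for rational $m_i$ the number $\min_i\frac{v_i(x_1\cdots x_n)}{m_i}$ lies in $\QQ$.

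I do not expect a genuine obstacle here. The argument rests on two elementary observations: that \Cref{tech-con} lets the single monomial $x_1^{p^e-1}\cdots x_n^{p^e-1}$ detect $\nu_{\aa_\bullet}^{\mm}(p^e)$ for a monomial filtration, and that membership of this monomial in $\aa_r$ amounts to one linear inequality per valuation. The only mildly delicate points are the bookkeeping in the degenerate (trivial valuation) case and the realization that the first equality does \emph{not} require computing $\vhat_i(\aa_\bullet)$ exactly — the one-sided bound $\vhat_i(\aa_\bullet)\ge m_i$, together with \Cref{skew-Wald}, already pins it down.
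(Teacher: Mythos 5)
Your proof is correct and follows essentially the same route as the paper: both rest on Lemma \ref{tech-con} applied to the monomial $x_1^{p^e-1}\cdots x_n^{p^e-1}$, the bound $\vhat_i(\aa_\bullet)\ge m_i$, and Theorem \ref{skew-Wald}. The only (harmless) difference is that you compute $\nu_{\aa_\bullet}^{\mm}(p^e)$ exactly as a floor, whereas the paper only needs the lower bound on $\nu$ and gets the upper bound from \Cref{skew-Wald}; your explicit check that $\aa_\bullet$ is a monomial filtration and your handling of the degenerate case are small additions, not a change of method.
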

\begin{proof}
 First of all note that $v_i(\aa_n) \ge m_in$ for all $n$, and therefore, $\vhat_i(\aa_\bullet) \ge m_i.$ Thus, by \Cref{skew-Wald}, $\C^{\mm}(\aa_\bullet) \le \min_{ 1 \le i \le k} \frac{v_i(x_1 \cdots x_n)}{\vhat_i(\aa_\bullet)}=\min_{1 \le i \le k} \frac{v_i(x_1 \cdots x_n)}{m_i}.$ Take $\alpha=\min_{1 \le i \le k} \frac{v_i(x_1 \cdots x_n)}{m_i}.$ Then, for each $1 \le i \le k$ and $e\ge 1$, $v_i(x_1^{p^e-1}\cdots x_n^{p^e-1}) =(p^e-1)v_i(x_1 \cdots x_n) \ge m_i \lfloor (p^e-1)\alpha \rfloor$, which implies that $x_1^{p^e-1}\cdots x_n^{p^e-1} \in \aa_{\lfloor (p^e-1)\alpha \rfloor}$. Thus, by \Cref{tech-con}, $\nu_{\aa_\bullet}^{\mm}(p^e) \ge {\lfloor (p^e-1)\alpha \rfloor} $ for each $e$. Now, applying limit to the sequence $\frac{\nu_{\aa_\bullet}^{\mm}(p^e)}{p^e}$ we get that $\C^{\mm}(\aa_\bullet) \ge \alpha =\min_{1 \le i \le k} \frac{v_i(x_1 \cdots x_n)}{m_i}.$ Hence, the assertion follows.
\end{proof}

Next, we study $F$-thresholds of homogeneous filtrations when $R$ is standard graded in terms of the degree valuation. 
\begin{corollary}
{Assume that $R$ is standard graded.} Let $\aa_\bullet$ be a filtration of  non-zero proper homogeneous ideals. Then, $\ahat(\aa_\bullet)>0$ if and only if  $\C^{\mm}(\aa_\bullet) <\infty.$ Furthermore, in this case, 
$$\mathcal{C}^{\mathfrak{m}}(\aa_\bullet) \leq \frac{n}{\ahat(\aa_\bullet)}.$$ 
\end{corollary}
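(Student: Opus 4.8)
The plan is to derive this corollary directly from \Cref{skew-Wald} by specializing the monomial valuation $v$ to the degree valuation $\alpha$ on $K = \mathbb{K}(x_1,\ldots,x_n)$. Since $R$ is standard graded, each variable $x_i$ has degree one, so $\alpha(x_i) = 1$ for all $i$, and hence $\alpha(x_1 \cdots x_n) = n$ and $\alpha(\mm) = \min_i \alpha(x_i) = 1 > 0$. The degree valuation is a monomial valuation, so both halves of \Cref{skew-Wald} apply with $v = \alpha$.

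For the forward direction, suppose $\ahat(\aa_\bullet) > 0$. Then \Cref{skew-Wald} immediately gives
$$\mathcal{C}^{\mathfrak{m}}(\aa_\bullet) \leq \frac{\alpha(x_1 \cdots x_n)}{\ahat(\aa_\bullet)} = \frac{n}{\ahat(\aa_\bullet)} < \infty,$$
which simultaneously proves finiteness and the displayed bound. Note that the existence of $\mathcal{C}^{\mathfrak{m}}(\aa_\bullet)$ here is not an issue: as remarked at the start of Section~\ref{sec-hom-fil}, $R$ is regular, so by \Cref{basic-prop}(1) the sequence $\{\nu_{\aa_\bullet}^{\mathfrak{m}}(p^e)/p^e\}$ is monotone increasing, and the bound above shows it is bounded, hence convergent. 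For the converse, suppose $\mathcal{C}^{\mathfrak{m}}(\aa_\bullet) < \infty$; then $\mathcal{C}^{\mathfrak{m}}(\aa_\bullet)$ exists (again by regularity and boundedness), and since $\alpha(\mm) = 1 > 0$, the second part of \Cref{skew-Wald} yields $\ahat(\aa_\bullet) > 0$.

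There is essentially no obstacle here — the corollary is a clean specialization of \Cref{skew-Wald}, and the only thing to be careful about is the logical bookkeeping around the word ``exists'': one must observe that in a regular ring, finiteness of $\mathcal{C}_+^{\mathfrak{m}}(\aa_\bullet)$ already forces $\mathcal{C}_+^{\mathfrak{m}}(\aa_\bullet) = \mathcal{C}_-^{\mathfrak{m}}(\aa_\bullet)$ via \Cref{basic-prop}(1), so the hypothesis of the second part of \Cref{skew-Wald} is automatically met. I would write the proof in three short sentences: record that $\alpha$ is a monomial valuation with $\alpha(x_1\cdots x_n) = n$ and $\alpha(\mm) = 1$; invoke \Cref{skew-Wald} for the forward implication and the bound; and invoke the second half of \Cref{skew-Wald} together with \Cref{basic-prop} for the converse.
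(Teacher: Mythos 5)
Your proof is correct and is essentially identical to the paper's: the paper also deduces the corollary by applying \Cref{skew-Wald} with $v=\alpha$, using that $\alpha$ is a monomial valuation with $\alpha(\mm)>0$ (and $\alpha(x_1\cdots x_n)=n$ in the standard graded case). Your extra remark that regularity of $R$ plus \Cref{basic-prop}(1) turns finiteness of $\mathcal{C}_+^{\mm}(\aa_\bullet)$ into existence of $\mathcal{C}^{\mm}(\aa_\bullet)$ is exactly the bookkeeping the paper leaves implicit.
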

\begin{proof}
The assertion immediately follows from Theorem \ref{skew-Wald} and the fact that $\alpha$ is a monomial valuation {with} $\alpha(\mm)>0.$
\end{proof}

As an immediate consequence, we obtain an upper bound for $F$-threshold and symbolic $F$-threshold of a non-zero proper homogeneous ideal $\aa$. 
\begin{corollary}\label{ub-f}
Assume that $R$ is standard graded. Let $\aa$ be a non-zero proper homogeneous ideal in $R$. Then, 
\begin{enumerate}
\item $\mathcal{C}^{\mathfrak{m}}(\aa^{\bullet}) \leq \frac{n}{\alpha(\aa)}.$
\item $\mathcal{C}^{\mathfrak{m}}(\aa^{(\bullet)}) \leq \frac{n}{\ahat(\aa^{(\bullet)})}$.
\end{enumerate}
\end{corollary}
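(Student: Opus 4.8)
The plan is to obtain both inequalities as immediate consequences of the preceding corollary, which says that for a filtration $\aa_\bullet$ of non-zero proper homogeneous ideals in the standard graded ring $R$ one has $\ahat(\aa_\bullet)>0$ if and only if $\mathcal{C}^{\mm}(\aa_\bullet)<\infty$, and in that case $\mathcal{C}^{\mm}(\aa_\bullet)\le n/\ahat(\aa_\bullet)$. Thus all that is needed is to apply that corollary to the two specific filtrations $\aa^\bullet=\{\aa^i\}_{i\ge 0}$ and $\aa^{(\bullet)}=\{\aa^{(i)}\}_{i\ge 0}$, after checking the hypotheses and, for part $(1)$, evaluating the relevant Waldschmidt constant.

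For part $(1)$ I would first note that $\aa^\bullet$ is a filtration of non-zero proper homogeneous ideals: each $\aa^i$ is non-zero and homogeneous because $\aa$ is, and $\aa^i\subseteq\aa\subseteq\mm$ since $\aa$ is proper homogeneous. Next I would record the elementary identity $\alpha(\aa^i)=i\,\alpha(\aa)$ for $i\ge 1$: if $d=\alpha(\aa)$, then $\aa^i$ is generated by products of $i$ homogeneous elements of $\aa$, each of degree at least $d$, so every nonzero homogeneous element of $\aa^i$ has degree at least $id$, while $g^i$ has degree exactly $id$ for a degree-$d$ element $g\in\aa$. Hence $\ahat(\aa^\bullet)=\inf_i \alpha(\aa^i)/i=\alpha(\aa)$, and $\alpha(\aa)\ge 1$ because $\aa$ is proper. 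Applying the preceding corollary then gives $\mathcal{C}^{\mm}(\aa^\bullet)\le n/\ahat(\aa^\bullet)=n/\alpha(\aa)$.

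For part $(2)$ I would observe that $\aa^{(\bullet)}$ is likewise a filtration of non-zero proper homogeneous ideals: symbolic powers of a homogeneous ideal are homogeneous, they are non-zero since $\aa^{(i)}\supseteq\aa^i\ne 0$, and $\aa^{(i)}\subseteq\pp$ for any $\pp\in\Ass(\aa)$, so $\aa^{(i)}$ is proper. Since $R$ is a (regular) polynomial ring and $\aa\subseteq\sqrt\mm=\mm$, \Cref{exist} guarantees that $\mathcal{C}^{\mm}(\aa^{(\bullet)})$ exists, in particular it is finite; by the preceding corollary this forces $\ahat(\aa^{(\bullet)})>0$ and yields $\mathcal{C}^{\mm}(\aa^{(\bullet)})\le n/\ahat(\aa^{(\bullet)})$. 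Unlike in part $(1)$, there is no general closed form for the Waldschmidt constant $\ahat(\aa^{(\bullet)})$, so the estimate is left in this shape.

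I do not expect any genuine obstacle here: the statement is a formal corollary of the preceding result together with \Cref{exist}. The only points needing a line of argument are the identity $\alpha(\aa^i)=i\,\alpha(\aa)$ and the verification that each symbolic power $\aa^{(i)}$ is a proper ideal, both of which are routine.
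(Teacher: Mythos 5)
Your proposal is correct and follows exactly the route the paper intends: the corollary is stated as an immediate consequence of the preceding corollary (the degree-valuation case of Theorem \ref{skew-Wald}), applied to the filtrations $\aa^{\bullet}$ and $\aa^{(\bullet)}$, using $\ahat(\aa^{\bullet})=\alpha(\aa)>0$ for part (1) and the existence of $\mathcal{C}^{\mm}(\aa^{(\bullet)})$ from Theorem \ref{exist} for part (2). The only details you add (the identity $\alpha(\aa^i)=i\,\alpha(\aa)$ and the homogeneity/properness of symbolic powers) are precisely the routine verifications the paper leaves implicit.
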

\begin{remark}
 The above upper bound for $F$-thresholds is known. When the ideal is principle, another proof is given in \cite[Proposition 2.2]{ZJJK22}.
\end{remark}

We now obtain an upper bound for $F$-threshold of a square-free monomial ideal in terms of  combinatorial invariants of associated hypergraph. 
\begin{corollary}\label{edge-f}
Assume that $R$ is standard graded. Let $\aa$ be a non-zero proper square-free monomial ideal in $R$ and let $\mathcal{H}$ be the associated hypergraph whose edge ideal is $\aa$, i.e., $I(\H)=\aa$. Then,
\begin{enumerate}
\item $\mathcal{C}^{\mathfrak{m}}(\aa^{\bullet}) \leq \frac{n}{d},$ where $d=\min\{ |e| ~ : ~ e \in E(\mathcal{H})\}.$ 
\item  $\mathcal{C}^{\mathfrak{m}}(\aa^{(\bullet)}) \leq \frac{n(\chi_f(\mathcal{H})-1)}{\chi_f(\mathcal{H})}$, where $\chi_f(\mathcal{H})$ is the fractional chromatic number of $\mathcal{H}$.
\end{enumerate}
\end{corollary}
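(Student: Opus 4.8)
The plan is to read off both bounds from the general estimate of \Cref{ub-f}, so the whole task is to identify the invariants $\alpha(\aa)$ and $\ahat(\aa^{(\bullet)})$ combinatorially. Part $(1)$ is immediate: since $\aa=I(\H)$ is generated by the squarefree monomials $\prod_{i\in e}x_i$, $e\in E(\H)$, and $\H$ is simple (no edge contained in another), this is the minimal monomial generating set of $\aa$, so every monomial of $\aa$ is divisible by some $\prod_{i\in e}x_i$ and hence has degree at least $|e|\ge d$, with equality realized by any edge of size $d$. Thus $\alpha(\aa)=d$, and \Cref{ub-f}$(1)$ yields $\mathcal{C}^{\mathfrak{m}}(\aa^{\bullet})\le n/d$.

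For part $(2)$, by \Cref{ub-f}$(2)$ it suffices to prove $\ahat(\aa^{(\bullet)})\ge \chi_f(\H)/(\chi_f(\H)-1)$, and we may assume $\H$ has an edge of cardinality at least two: otherwise no proper coloring exists, $\chi_f(\H)$ is to be read as $+\infty$, and the assertion becomes $\mathcal{C}^{\mathfrak{m}}(\aa^{(\bullet)})\le n$, which already follows from $\ahat(\aa^{(\bullet)})\ge 1$ (each $\aa^{(m)}$ lies in $(x_i:i\in W)^m$ for any nonempty minimal vertex cover $W$). Since $\aa=I(\H)$ is radical, $\aa^{(m)}=\bigcap_W (x_i:i\in W)^m$, the intersection running over the vertex covers $W$ of $\H$, and a monomial $x^{\mathbf b}$ belongs to $(x_i:i\in W)^m$ exactly when $\sum_{i\in W}b_i\ge m$. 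Fix $b\ge 1$ and a $b$-fold coloring of $\H$ with $t:=\chi_b(\H)$ colors; the color classes $C_1,\dots,C_t$ each contain no edge of $\H$, so each complement $W_j:=V(\H)\setminus C_j$ is a vertex cover, while each vertex of $\H$ lies in at least $b$ of the $C_j$. Choosing a monomial $x^{\mathbf b}$ of least degree in $\aa^{(m)}$, we have $\sum_{i\in W_j}b_i\ge m$ for all $j$; summing over $j$ and setting $n_i:=|\{j:i\in C_j\}|\ge b$,
\[
tm\ \le\ \sum_{j=1}^t\sum_{i\in W_j}b_i\ =\ t\,|\mathbf b|_1-\sum_i n_i b_i\ \le\ (t-b)\,|\mathbf b|_1 ,
\]
where we used $\sum_i n_i b_i\ge b\,|\mathbf b|_1$; since $t=\chi_b(\H)\ge \chi_f(\H)\,b>b$, this gives $\alpha(\aa^{(m)})=|\mathbf b|_1\ge \frac{\chi_b(\H)}{\chi_b(\H)-b}\,m$. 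Dividing by $m$ and letting $m\to\infty$ gives $\ahat(\aa^{(\bullet)})\ge \chi_b(\H)/(\chi_b(\H)-b)$ for every $b$; letting $b\to\infty$ and using $\chi_b(\H)/b\to \chi_f(\H)$ together with the continuity of $x\mapsto x/(x-1)$ on $(1,\infty)$, we obtain $\ahat(\aa^{(\bullet)})\ge \chi_f(\H)/(\chi_f(\H)-1)$, and \Cref{ub-f}$(2)$ concludes.

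The one genuinely non-routine point is the averaging inequality over the color classes together with the passage to the limit; the translation between the algebraic statement $x^{\mathbf b}\in\aa^{(m)}$ and the covering conditions, and the final limit bookkeeping, are formal. Alternatively, part $(2)$ can be obtained by invoking the known identity $\ahat(I(\H)^{(\bullet)})=\chi_f(\H)/(\chi_f(\H)-1)$ for edge ideals of hypergraphs.
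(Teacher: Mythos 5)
Your proof is correct, and part (1) coincides with the paper's (both reduce to $\alpha(\aa)=d$ plus \Cref{ub-f}(1)). For part (2) you take a genuinely different route: the paper simply cites the identity $\ahat(\aa^{(\bullet)})=\chi_f(\H)/(\chi_f(\H)-1)$ from \cite[Theorem 4.6]{BCG16} and plugs it into \Cref{ub-f}(2), whereas you prove directly the only inequality actually needed, $\ahat(\aa^{(\bullet)})\ge \chi_f(\H)/(\chi_f(\H)-1)$: complements of the color classes of a $b$-fold coloring are vertex covers (since each class contains no edge), a minimal-degree monomial $x^{\mathbf b}\in\aa^{(m)}$ satisfies $\sum_{i\in W_j}b_i\ge m$ for each such cover, and averaging gives $tm\le (t-b)\,|\mathbf b|_1$ with $t=\chi_b(\H)>b$, after which letting $m\to\infty$ and then $b\to\infty$ finishes; the degenerate case of a singleton edge is correctly handled separately via $\ahat\ge 1$. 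The bookkeeping checks out (the identification $\aa^{(m)}=\bigcap_W(x_i:i\in W)^m$, the use of $n_i\ge b$, and $\chi_f(\H)>1$ whenever some edge has cardinality at least two). What your argument buys is self-containedness — it essentially reproves the relevant half of the cited result — and it pins down the coloring convention that the statement requires, namely that color classes be independent sets containing no edge; the paper's stated definition (``adjacent vertices have different colors'') reads as the strong coloring of a hypergraph and would not yield the correct constant for edges of cardinality at least three, so your argument is in fact the more careful one. What the paper's route buys is brevity and the full equality $\ahat(\aa^{(\bullet)})=\chi_f(\H)/(\chi_f(\H)-1)$, which you also mention as an alternative at the end, so the two approaches ultimately converge.
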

\begin{proof}
The first assertion immediately follows from \Cref{ub-f} and the fact that $\alpha(\aa) =d$. The second part follows from \Cref{ub-f} and \cite[Theorem 4.6]{BCG16}.
\end{proof}

The bound obtained in \Cref{ub-f} and \Cref{edge-f} are sharp, as the following examples illustrate it:

\begin{example}
Assume that $R$ is standard graded.  Let $G$ be a Hamiltonian graph on the vertex set $\{1,\ldots,n\}$ and {$I_G$  be the corresponding edge ideal in $R$}. We claim that  $\mathcal{C}^{\mm}(I_G^{\bullet}) =\frac{n}{2}.$ In view of \Cref{edge-f}, it is enough to prove that  $\mathcal{C}^{\mm}(I_G^{\bullet}) \ge \frac{n}{2}.$ Since $G$  is a Hamiltonian graph, we have $(x_1 \cdots x_n)^2 \in I_G^n$. For any $e$, write $p^e-1 = 2q_e+r_e$ with $0 \le r_e \le 1$ and $q_e \ge 0$.   Consider $(x_1 \cdots x_n)^{p^e-1}= (x_1 \cdots x_n)^{2q_e+r_e} \in I_G^{nq_e+r_e}  $.  Thus, $I_G^{nq_e+r_e} \not\subseteq \mm^{[p^e]}$ which implies that $\nu_{I_G^{\bullet}}^{\mm}(p^e) \ge nq_e = \frac{n(p^e-1-r_e)}{2} \ge \frac{n(p^e-2)}{2}.$  {Now, by applying limit to the both sides of the inequality, we get $\displaystyle \mathcal{C}^{\mm}(I_G^\bullet) =\lim_{e \to \infty} \frac{\nu_{I_G^{\bullet}}^{\mm}(p^e)}{p^e} \ge \lim_{e \to \infty} \frac{n(p^e-2)}{2p^e} =\frac{n}{2}. $ Hence, the assertion follows.}\end{example}

\begin{example} Assume that $R$ is standard graded. It is well known that the $F$-threshold  of edge ideal of a hypergraph is equal to the fractional matching number of that graph, see \cite{How01, Adam23}. In particular, if $\mathcal{H}$ is a hypergraph on the vertex set $\{1,\ldots, n\},$ then   $\mathcal{C}^{\mm}(I_{\mathcal{H}}^{\bullet}) \ge m(\mathcal{H}),$ where $m(\mathcal{H})$ is the matching number of $\mathcal{H}$. Thus, if $G$ is a graph with perfect matching, then by  \Cref{edge-f}, $\mathcal{C}^{\mm}(I_G^{\bullet}) = m(G)=\frac{n}{2},$ as every vertex is part of some edge in the perfect matching.   
\end{example}

\begin{example}
Assume that $R$ is standard graded.  Let $G=C_{n}$ be an odd cycle on the vertex set ${\{1,\ldots,n\}}$ and $J(G)$ be the vertex cover ideal of $G$. It follows from the proof of \cite[Theorem 4.7]{JKM22} that $\ahat(J(G)^{(\bullet)}) =\frac{n}{2}$ and $(x_1\cdots x_n)^s \in J(G)^{(2s)} $ for all $s$. Consequently, by \Cref{ub-f},  $\mathcal{C}^{\mm}(J(G)^{(\bullet)}) \le 2.$  One can see that  $(x_1\cdots x_n)^{p^e-1} \in J(G)^{(2(p^e-1))} \setminus \mm^{[p^e]}$. Therefore, $ \nu_{J(G)^{(\bullet)}}^{\mm}(p^e) \ge 2(p^e-1). $ Now, by \Cref{basic-prop}, $\mathcal{C}^{\mm}(J(G)^{(\bullet)}) =\sup_{e \ge 0} \frac{\nu^{\mm}_{J(G)^{(\bullet)}}(p^e)}{p^e} \ge \sup_{e \ge 0} \frac{2(p^e-1)}{p^e} \ge 2. $ Thus, $\mathcal{C}^{\mathfrak{m}}(J(G)^{(\bullet)}) = \frac{n}{\ahat(J(G)^{(\bullet)})}$.
\end{example}

\begin{lemma}\label{ub-val}
Let $\aa_\bullet$ be a filtration of monomial ideals in $R$. Let $v$ be a valuation of quotient field of $R$. If $\vhat(\aa_\bullet)>0$, then $$\mathcal{C}^{\mathfrak{m}}(\aa_\bullet) \leq \frac{v(x_1\cdots x_n)}{\vhat(\aa_\bullet)}.$$  Moreover, if $v(\mm)>0$  and {$\mathcal{C}^{\mathfrak{m}}(\aa_\bullet)$ exists}, then $\vhat(\aa_\bullet)>0$.
\end{lemma}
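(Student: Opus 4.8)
This lemma is the monomial-filtration analogue of \Cref{skew-Wald}, with the degree/monomial valuation $v$ on $R$ replaced by an arbitrary valuation $v$ of the quotient field of $R$. The point that makes the general case go through is that the ideals $\aa_n$ are \emph{monomial}, so the argument of \Cref{skew-Wald} can be recycled almost verbatim once one observes that membership and non-membership in $\aa_n$ and in $\mm^{[p^e]}$ can be detected on monomials. I would write the proof essentially as a reduction to (a repetition of) the proof of \Cref{skew-Wald}.

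\textbf{First half (the bound).} Assume $\vhat(\aa_\bullet)>0$. Suppose $\aa_r\not\subseteq\mm^{[p^e]}$. Since $\aa_r$ is a monomial ideal, there is a \emph{monomial} $u\in\aa_r$ with $u\notin\mm^{[p^e]}$; the latter forces every variable to occur in $u$ with exponent at most $p^e-1$, so $u\mid x_1^{p^e-1}\cdots x_n^{p^e-1}$. Hence, writing $x_1^{p^e-1}\cdots x_n^{p^e-1}=u\cdot w$ for a monomial $w$ and using that $v$ is a valuation (so $v$ is additive on products and nonnegative on $R$),
$$
v\big(x_1^{p^e-1}\cdots x_n^{p^e-1}\big)=v(u)+v(w)\ \ge\ v(u)\ \ge\ v(\aa_r),
$$
where $v(\aa_r)=\min\{v(g):g\in\aa_r\setminus\{0\}\}$. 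Thus $(p^e-1)\,v(x_1\cdots x_n)\ge v(\aa_r)\ge r\,\vhat(\aa_\bullet)$, the last inequality because $\vhat(\aa_\bullet)=\inf_{r\ge 1} v(\aa_r)/r$. Therefore $r\le (p^e-1)v(x_1\cdots x_n)/\vhat(\aa_\bullet)$ whenever $\aa_r\not\subseteq\mm^{[p^e]}$, so for every $e$
$$
\frac{\nu_{\aa_\bullet}^{\mm}(p^e)}{p^e}\ \le\ \Big(1-\tfrac{1}{p^e}\Big)\frac{v(x_1\cdots x_n)}{\vhat(\aa_\bullet)},
$$
and taking the limit (legitimate since $R$ is regular, so by \Cref{basic-prop}(1) the $F$-threshold with respect to $\mm$ exists and equals the supremum of this sequence) gives $\mathcal{C}^{\mm}(\aa_\bullet)\le v(x_1\cdots x_n)/\vhat(\aa_\bullet)<\infty$.

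\textbf{Second half (the converse).} Assume $v(\mm)>0$ and that $\mathcal{C}^{\mm}(\aa_\bullet)$ exists. Then $\mathcal{C}_+^{\mm}(\aa_\bullet)<\infty$, so by \Cref{exis-nec} there is a positive integer $M$ with $\aa_{Mp^e}\subseteq\mm^{[p^e]}$ for all $e\gg 0$. Applying $v$ and using $v(\mm^{[p^e]})=p^e\min\{v(x_i):1\le i\le n\}=p^e v(\mm)$, we get $v(\aa_{Mp^e})\ge p^e v(\mm)$, hence
$$
\vhat(\aa_\bullet)=\lim_{e\to\infty}\frac{v(\aa_{Mp^e})}{Mp^e}\ \ge\ \frac{v(\mm)}{M}\ >\ 0.
$$
(Here I use that $v(\aa_i)/i$ is sub-additive, so the limit defining $\vhat$ exists and may be computed along the subsequence $i=Mp^e$.) This is exactly the statement. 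The only place the monomial hypothesis on $\aa_\bullet$ is genuinely used is in the first half, to extract a monomial generator witnessing $\aa_r\not\subseteq\mm^{[p^e]}$; there is no real obstacle beyond being careful that $v$ need not be a monomial valuation, so one cannot replace $v(u)$ by a ``minimum over monomial terms'' computed termwise on a general $f\in\aa_r$ — working directly with a monomial $u\in\aa_r$ sidesteps this entirely.
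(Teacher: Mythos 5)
Your proof is correct and is essentially the paper's own argument: the paper simply says the proof is similar to that of \Cref{skew-Wald}, and your adaptation — using monomiality of $\aa_r$ to pick a monomial $u\in\aa_r\setminus\mm^{[p^e]}$ in place of the monomiality of $v$, then dividing into $(x_1\cdots x_n)^{p^e-1}$ and using $\vhat(\aa_\bullet)=\inf_r v(\aa_r)/r$, with the converse via $\aa_{Mp^e}\subseteq\mm^{[p^e]}$ and $v(\mm^{[p^e]})=p^e v(\mm)$ — is exactly the intended one. Your remark pinpointing where the monomial hypothesis on the filtration is genuinely needed is a nice touch and matches the paper's reasoning.
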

\begin{proof}
The proof is similar to the proof of \Cref{skew-Wald}. 
\end{proof}
 
 Next, we study $F$-thresholds and symbolic $F$-thresholds of monomial ideals in terms of Rees valuations. 



\begin{theorem}\label{Rees-valuation}
 Let $\aa$ be a non-zero proper monomial ideal in $R$. Then,   $$C^{\mathfrak{m}}(\aa^{\bullet})= \min \left\{\frac{v(x_1 \cdots x_n)}{v(\aa)}: v \in \text{RV}(\aa) \right\},$$ where $\text{RV}(\aa) $ denote the set of Rees valuations of $\aa.$ Moreover, $C^{\mathfrak{m}}(\aa^{\bullet})$ is a rational number.
\end{theorem}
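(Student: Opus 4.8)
The plan is to prove the two inequalities separately, using the valuative description of integral closure (Remark \ref{valuative-criterion}) on one side and the monomial combinatorics via Lemma \ref{tech-con} on the other. Set $N = \min\left\{\frac{v(x_1\cdots x_n)}{v(\aa)} : v \in \RV(\aa)\right\}$, noting that each Rees valuation $v$ of a monomial ideal is itself a monomial valuation by Theorem 2.7(2), so $v(x_1\cdots x_n)$ and $v(\aa)$ are well-defined positive integers (or rationals after normalization). Rationality of $N$, hence of $C^{\mm}(\aa^{\bullet})$, will then follow automatically once equality is established, since $\RV(\aa)$ is finite.

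First I would prove $C^{\mm}(\aa^\bullet) \leq N$. Fix a Rees valuation $v$ achieving the minimum. Since $\overline{\aa^r} = \{g \in R : v_i(g) \ge r\,v_i(\aa) \text{ for all } i\}$ by Remark \ref{valuative-criterion}, and since for any monomial $u$ not in $\mm^{[p^e]}$ we have $u \mid (x_1\cdots x_n)^{p^e-1}$ hence $v(u) \le (p^e-1)v(x_1\cdots x_n)$, the same divisibility argument as in Theorem \ref{skew-Wald} applies: if $\aa^r \not\subseteq \mm^{[p^e]}$ then some monomial generator $u$ of $\aa^r$ satisfies $v(u) \ge v(\aa^r) = r\,v(\aa)$ (using that $v$ is monomial and $\aa^r$ is monomial), so $r\,v(\aa) \le (p^e-1)v(x_1\cdots x_n)$, giving $\nu_{\aa^\bullet}^{\mm}(p^e) \le (p^e-1)\frac{v(x_1\cdots x_n)}{v(\aa)}$. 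Dividing by $p^e$ and applying Proposition \ref{basic-prop}(1) (since $R$ is regular) yields $C^{\mm}(\aa^\bullet) \le \frac{v(x_1\cdots x_n)}{v(\aa)} = N$. In fact this works verbatim for every Rees valuation, so we even get the min over $\RV(\aa)$ for free; this is essentially Lemma \ref{ub-val} applied to each Rees valuation (whose skew-Waldschmidt constant for the adic filtration is $v(\aa)$).

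For the reverse inequality $C^{\mm}(\aa^\bullet) \ge N$, the idea is to produce, for each $e$, a monomial $w_e$ of the form $x_1^{p^e-1}\cdots x_n^{p^e-1}$ lying in $\aa^{r_e}$ with $r_e$ close to $(p^e-1)N$, so that Lemma \ref{tech-con} gives $\nu_{\aa^\bullet}^{\mm}(p^e) \ge r_e$. The natural candidate for the exponent is $r_e = \lfloor (p^e-1)N \rfloor$; I must check $(x_1\cdots x_n)^{p^e-1} \in \aa^{r_e}$. Since $\aa$ is a monomial ideal with (up to radical) a single minimal prime structure controlled by its Rees valuations, one has $\overline{\aa^s} = \bigcap_{v \in \RV(\aa)} \{g : v(g) \ge s\,v(\aa)\}$, and $(x_1\cdots x_n)^{p^e-1} \in \overline{\aa^{r_e}}$ precisely when $(p^e-1)v(x_1\cdots x_n) \ge r_e\, v(\aa)$ for all $v \in \RV(\aa)$, i.e. when $r_e \le (p^e-1)\min_v \frac{v(x_1\cdots x_n)}{v(\aa)} = (p^e-1)N$, which holds by choice of $r_e$. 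So $(x_1\cdots x_n)^{p^e-1} \in \overline{\aa^{r_e}}$. The remaining gap — and the main obstacle — is that membership in the \emph{integral closure} $\overline{\aa^{r_e}}$ is weaker than membership in $\aa^{r_e}$ itself, which is what Lemma \ref{tech-con} requires. I would close this gap by invoking Corollary \ref{int-f} (or more directly, since $R$ is a polynomial ring hence analytically unramified after localizing at $\mm$, Corollary \ref{int-fil} applies to $\aa^\bullet$): $C^{\mm}(\aa^\bullet) = C^{\mm}(\overline{\aa^\bullet})$. Working with the normal filtration $\overline{\aa^\bullet}$ instead, Lemma \ref{tech-con} applies verbatim to the monomial filtration $\{\overline{\aa^i}\}$, giving $\nu_{\overline{\aa^\bullet}}^{\mm}(p^e) \ge r_e = \lfloor (p^e-1)N\rfloor$, hence $C^{\mm}(\overline{\aa^\bullet}) \ge N$, and therefore $C^{\mm}(\aa^\bullet) \ge N$. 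Combining the two inequalities gives equality, and since $N$ is a finite minimum of ratios of integers, $C^{\mm}(\aa^\bullet)$ is rational.
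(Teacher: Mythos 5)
Your proposal is correct and follows essentially the same route as the paper: both arguments hinge on passing to the normal filtration $\overline{\aa^{\bullet}}$ via $\mathcal{C}^{\mm}(\aa^\bullet)=\mathcal{C}^{\mm}(\overline{\aa^\bullet})$ (Corollary \ref{int-f}/\ref{int-fil}), then combining Lemma \ref{tech-con} with the valuative criterion of integral closure (Remark \ref{valuative-criterion}) to read off $\nu_{\overline{\aa^\bullet}}^{\mm}(p^e)$ from the Rees valuations. The only cosmetic difference is that you prove the upper bound directly on $\aa^\bullet$ by the divisibility argument of Theorem \ref{skew-Wald}, whereas the paper gets both bounds at once by computing $\nu_{\overline{\aa^\bullet}}^{\mm}(p^e)=\left\lfloor (p^e-1)\frac{w(x_1\cdots x_n)}{w(\aa)}\right\rfloor$ exactly.
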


\begin{proof}
Consider the filtration $\overline{\aa^{\bullet}} =\{ \overline{\aa^i}\}_{i \ge 0}$. It follows from   \Cref{int-f} that  $C^{\mathfrak{m}}(\aa^{\bullet})=C^{\mathfrak{m}}(\overline{\aa^{\bullet}}).$  Thus it is enough to prove that $$C^{\mathfrak{m}}(\overline{\aa^{\bullet}})= \min \left\{\frac{v(x_1 \cdots x_n)}{v(\aa)}: v \in \text{RV}(\aa) \right\} .$$  Let $w \in \text{RV}(\aa)$ such that $$\frac{w(x_1 \cdots x_n)}{w(\aa)} =\min \left\{\frac{v(x_1 \cdots x_n)}{v(\aa)}: v \in \text{RV}(\aa) \right\} .$$ Let $r$ be a non-negative  integer.  Then,  $x_{1}^{p^e-1}\cdots x_{n}^{p^e-1} \in \overline{\aa^r}$ if and only if  $v(x_{1}^{p^e-1}\cdots x_{n}^{p^e-1} ) \ge r v( \aa)$ for all $v \in \text{RV}(\aa)$  which happens if and only if  $r \le \frac{w(x_1\cdots x_n)}{{w}(\aa)}(p^e-1)$. Thus, by \Cref{tech-con}, $\nu_{\overline{\aa^\bullet}}^{\mathfrak{m}}(p^e) = \left \lfloor \frac{w(x_1\cdots x_n)}{{w}(\aa)}(p^e-1) \right \rfloor.$ Now, by \Cref{basic-prop} and 
applying limit $e$ tends to infinity to $\frac{\nu_{\overline{\aa^\bullet}}^{\mathfrak{m}}(p^e) }{p^e}$, we get  $$C^{\mathfrak{m}}(\overline{\aa^{\bullet}})=  \frac{w(x_1\cdots x_n)}{{w}(\aa)}.$$ Hence, the assertion follows. 
\end{proof}

 Using Theorem \ref{Rees-valuation}, one can characterize monomial ideals for which the inequality in  \Cref{ub-f} $(1)$ becomes an equality.

\begin{remark}
Assume that $R$ is standard graded. Let $\aa$ be a monomial ideal in $R$. We claim that $\C^{\mm}(\aa^\bullet)= \frac{n}{\alpha(\aa)}$  if and only if $(x_1 \cdots   x_n)^{\alpha(\aa)} \in \overline{\aa^n}.$ First, we assume that $\C^{\mm}(\aa^\bullet)= \frac{n}{\alpha(\aa)}$. By \Cref{Rees-valuation},   $\frac{n}{\alpha(\aa)}=\C^{\mm}(\aa^\bullet)\le \frac{v(x_1 \cdots x_n)}{v(\aa)}$ for all  $v \in \text{RV}(\aa) $. Therefore, $v((x_1\cdots x_n)^{\alpha(\aa)}) \ge v(\aa^n)$ for all $v \in \text{RV}(\aa) $. Consequently, by valuation criteria of integral closure (Remark~\ref{valuative-criterion}), $(x_1 \cdots x_n)^{\alpha(\aa)} \in \overline{\aa^n}.$ 
Conversely, we have $(x_1 \cdots x_n)^{\alpha(\aa)} \in \overline{\aa^n}.$ Then, $v((x_1\cdots x_n)^{\alpha(\aa)}) \ge v(\aa^n)$ for all $v \in \text{RV}(\aa) $. Equivalently, $\frac{n}{\alpha(\aa)}\le \frac{v(x_1 \cdots x_n)}{v(\aa)}$ for all  $v \in \text{RV}(\aa) $. Now, by Theorem \ref{Rees-valuation} and \Cref{ub-f}, $\C^{\mm}(\aa^\bullet) = \frac{n}{\alpha(\aa)}$. Hence, the claim follows. 
\end{remark}

In \Cref{cor:bound-of-F-threshold}, we have seen that $\hgt(\aa)$ is an upper bound of $\C^{\mm}(\aa^\bullet)$. In the following remark, we provide a necessary and a sufficient condition which impose  monomial ideals to have maximal $F$-threshold.

\begin{remark}
Let $\aa$ be a monomial ideal in $R$. We claim that $\C^{\mm}(\aa^\bullet)= \hgt(\aa)$  if and only if $x_1 \cdots   x_n \in \overline{\aa^{\hgt(\aa)}}.$ {Note that}  $\C^{\mm}(\aa^\bullet)= \hgt(\aa)$ if and only if $\hgt(\aa)\le \frac{v(x_1 \cdots x_n)}{v(\aa)}$ for all  $v \in \text{RV}(\aa) $ (by \Cref{Rees-valuation}), {if and only if} $v(x_1\cdots x_n) \ge v(\aa^{\hgt(\aa)})$ for all $v \in \text{RV}(\aa) $ which is equivalent to  $x_1 \cdots x_n \in \overline{\aa^{\hgt(\aa)}}$, by valuation criteria of integral closure (Remark ~\ref{valuative-criterion}). Hence, the claim follows. 
\end{remark}

We now obtain $F$-threshold of Noetherian monomial filtration  in terms of Rees valuations.

\begin{theorem}\label{val-noe-fil}
Let $\aa_\bullet$ be a  filtration of monomial ideals in $R$ such that $\R(\aa_\bullet)$ is Noetherian. Then $$\C^{\mm}(\aa_\bullet) =\min_{v \in \text{RV}(\aa_r)} \frac{v(x_1\cdots x_n)}{\vhat(\aa_\bullet)},$$ where $r$ is a positive integer such that  the $r$-th Veronese subalgebra $\R^{[r]}(\aa_\bullet):=\bigoplus_{i\ge 0} \aa_{ri}t^{ri}$ of $\R(\aa_\bullet)$ is a standard graded $R$-algebra.   Moreover, $C^{\mathfrak{m}}(\aa_{\bullet})$ is a rational number.
\end{theorem}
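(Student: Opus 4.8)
The plan is to reduce the problem to the Noetherian (standard graded) case that has already been established, namely Theorem \ref{noe-fil} and Theorem \ref{Rees-valuation}, by passing to the $r$-th Veronese subalgebra. Since $\R(\aa_\bullet)$ is Noetherian, by \cite[Proposition 2.1]{P88} there is a positive integer $r$ such that $\R^{[r]}(\aa_\bullet)$ is a standard graded $R$-algebra; equivalently $\aa_{rn}=\aa_r^n$ for all $n\ge 0$, and note that $\aa_r$ is again a monomial ideal because each $\aa_i$ is monomial. First I would invoke Theorem \ref{noe-fil} to get that $\C^{\mm}(\aa_\bullet)$ exists and equals $r\,\C^{\mm}(\aa_r^\bullet)$. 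Then Theorem \ref{Rees-valuation} applied to the monomial ideal $\aa_r$ gives
$$\C^{\mm}(\aa_r^\bullet)=\min\left\{\frac{v(x_1\cdots x_n)}{v(\aa_r)}\ :\ v\in \text{RV}(\aa_r)\right\},$$
and in particular $\C^{\mm}(\aa_\bullet)=r\,\C^{\mm}(\aa_r^\bullet)$ is rational, which disposes of the last sentence of the statement.

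The remaining work is to rewrite $r\,\C^{\mm}(\aa_r^\bullet)$ in the stated form. The key identity is that for a monomial valuation $v$ (and every Rees valuation of the monomial ideal $\aa_r$ is monomial by Theorem part (2) of the Rees-valuation theorem quoted in the preliminaries) one has
$$\vhat(\aa_\bullet)=\inf_{n\ge 1}\frac{v(\aa_n)}{n}=\inf_{m\ge 1}\frac{v(\aa_{rm})}{rm}=\inf_{m\ge 1}\frac{v(\aa_r^m)}{rm}=\frac{v(\aa_r)}{r},$$
where the second equality uses that $\{v(\aa_n)/n\}$ is a subadditive sequence so the infimum over all $n$ is computed along the subsequence of multiples of $r$ (this is the standard Fekete argument, already cited for the definition of $\vhat$), the third uses $\aa_{rm}=\aa_r^m$, and the last uses $v(\aa_r^m)=m\,v(\aa_r)$ for a valuation. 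Hence $v(\aa_r)=r\,\vhat(\aa_\bullet)$, and therefore
$$r\,\frac{v(x_1\cdots x_n)}{v(\aa_r)}=\frac{v(x_1\cdots x_n)}{\vhat(\aa_\bullet)}$$
for every $v\in\text{RV}(\aa_r)$. Taking the minimum over $v\in\text{RV}(\aa_r)$ and combining with the two displayed formulas above yields $\C^{\mm}(\aa_\bullet)=\min_{v\in\text{RV}(\aa_r)}\frac{v(x_1\cdots x_n)}{\vhat(\aa_\bullet)}$, as desired.

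I expect the only genuinely delicate point to be the claim $v(\aa_r)=r\,\vhat(\aa_\bullet)$, i.e.\ that the infimum defining the skew-Waldschmidt constant is already attained (as a limit) along the arithmetic progression $n=r,2r,3r,\dots$. Concretely one checks $\vhat(\aa_\bullet)\le \frac{v(\aa_{rm})}{rm}$ for every $m$ trivially, and for the reverse inequality one uses that for arbitrary $n$ write $n=qr+t$ with $0\le t<r$, then subadditivity gives $v(\aa_n)\ge$ (a bounded error) $+\,q\,v(\aa_r)$, so $v(\aa_n)/n\to v(\aa_r)/r$ along a cofinal set, forcing $\inf_n v(\aa_n)/n=v(\aa_r)/r$; this is exactly the content of Fekete's lemma restricted to a subsequence and is routine. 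A small bookkeeping point worth a sentence in the write-up: one should note that $\aa_r\ne 0$ (so that $\text{RV}(\aa_r)$ is nonempty and $v(\aa_r)>0$) — this follows since $\C^{\mm}(\aa_\bullet)$ is finite and, by the earlier existence discussion, $\C^{\mm}(\aa_\bullet)=0$ would force $\aa_i=0$ for all $i\ge1$, which we may exclude as a trivial case, or equivalently $\aa_r$ is a proper nonzero monomial ideal by hypothesis on $\aa_\bullet$. Everything else is a direct citation of Theorems \ref{noe-fil} and \ref{Rees-valuation}.
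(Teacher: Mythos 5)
Your proposal is correct and takes essentially the same route as the paper: reduce via Theorem \ref{noe-fil} to $\C^{\mm}(\aa_\bullet)=r\,\C^{\mm}(\aa_r^{\bullet})$, apply Theorem \ref{Rees-valuation} to the monomial ideal $\aa_r$, and convert $v(\aa_r)$ into $r\,\vhat(\aa_\bullet)$. The only difference is that the paper cites \cite[Lemma 2.11]{HKN22} for the identity $\vhat(\aa_\bullet)=\frac{v(\aa_r)}{r}$, whereas you prove it directly by the Fekete/subadditivity argument along the subsequence of multiples of $r$ together with $\aa_{rm}=\aa_r^m$ and $v(\aa_r^m)=m\,v(\aa_r)$, which is a correct substitute.
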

\begin{proof}
It follows from \Cref{noe-fil} that $\C^{\mm}(\aa_\bullet)=r\C^{\mm}(\aa_r^\bullet)$, and from the proof, it follows that  $\R^{[r]}(\aa_\bullet)$ is a standard graded $R$-algebra. Therefore, by \cite[Lemma 2.11]{HKN22},  for any valuation $v$ of $K$, $\vhat(\aa_\bullet)=\frac{v(\aa_r)}{r}.$ The rest now follows from \Cref{Rees-valuation}.
\end{proof}

As a consequence, we obtain symbolic $F$-threshold of  monomial ideals in terms of Rees valuations.

\begin{corollary}\label{sym-val}
 Let $\aa$ be a non-zero proper monomial ideal in $R$. Then, 
$$C^{\mathfrak{m}}(\aa^{(\bullet)})= \min_{v \in \text{RV}(\aa_r)} \left\{\frac{v(x_1 \cdots x_n)}{\vhat(\aa^{(\bullet)})}: v \in \text{RV}(\aa^{(r)}) \right\},$$ where $r$ is a positive integer such that $\R^{[r]}(\aa^{(\bullet)})$ is a standard graded $R$-algebra. 
 In particular,  $C^{\mathfrak{m}}(\aa^{(\bullet)})$ is a rational number.
\end{corollary}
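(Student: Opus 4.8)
The plan is to deduce Corollary~\ref{sym-val} directly from Theorem~\ref{val-noe-fil}, so the only real content is to verify that the hypotheses of that theorem are met and to translate its conclusion. First I would observe that the symbolic power filtration $\aa^{(\bullet)}$ of a monomial ideal $\aa$ is again a filtration of monomial ideals: each symbolic power $\aa^{(i)} = \bigcap_{\pp \in \Ass(\aa)}(\aa^i R_\pp \cap R)$ is a monomial ideal since $\aa$ is monomial and the localizations/contractions preserve the monomial property. Next I would invoke the well-known fact (e.g.\ Herzog--Hibi, or Lyubeznik, via the finite generation of the symbolic Rees algebra of a monomial ideal) that $\R(\aa^{(\bullet)})$ is a Noetherian $R$-algebra; this is precisely the missing hypothesis that Theorem~\ref{val-noe-fil} requires. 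With Noetherianity in hand, \cite[Proposition 2.1]{P88} gives a positive integer $r$ with $\R^{[r]}(\aa^{(\bullet)}) = \bigoplus_{i\ge 0}\aa^{(ri)}t^{ri}$ standard graded, i.e.\ $\aa^{(ri)} = (\aa^{(r)})^i$ for all $i$.

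Then I would simply apply Theorem~\ref{val-noe-fil} to the filtration $\aa_\bullet = \aa^{(\bullet)}$: it yields
$$\C^{\mm}(\aa^{(\bullet)}) = \min_{v \in \text{RV}(\aa^{(r)})} \frac{v(x_1\cdots x_n)}{\vhat(\aa^{(\bullet)})},$$
where $\aa_r = \aa^{(r)}$ in the notation of the theorem, and where $\vhat(\aa^{(\bullet)})$ is computed via any valuation $v$ by $\vhat(\aa^{(\bullet)}) = v(\aa^{(r)})/r$ using \cite[Lemma 2.11]{HKN22} (as in the proof of Theorem~\ref{val-noe-fil}). The rationality of $\C^{\mm}(\aa^{(\bullet)})$ follows because the Rees valuations of the monomial ideal $\aa^{(r)}$ are rational monomial valuations (Theorem~2.2(2) in the excerpt, i.e.\ \cite[Proposition 10.3.4]{HS06}) and $\vhat(\aa^{(\bullet)}) = v(\aa^{(r)})/r$ is then a rational number, so the minimum of a finite set of rationals is rational.

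I anticipate no serious obstacle; the one point that needs care is the bookkeeping of indices — in Theorem~\ref{val-noe-fil} the set over which the minimum is taken is $\text{RV}(\aa_r)$, and here $\aa_r$ means the $r$-th term of the filtration, which is $\aa^{(r)}$, not $\aa^r$. So the statement as written in Corollary~\ref{sym-val} should read $\min_{v \in \text{RV}(\aa^{(r)})}$ (the appearance of $\text{RV}(\aa_r)$ in the displayed formula in the corollary statement is a typo for $\text{RV}(\aa^{(r)})$, matching the qualifier ``$v \in \text{RV}(\aa^{(r)})$'' already present inside the braces). The only genuinely external input is the finite generation of the symbolic Rees algebra of a monomial ideal, which is standard and can be cited; everything else is a direct specialization of the already-proved Theorem~\ref{val-noe-fil}.
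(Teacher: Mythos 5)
Your proposal is correct and follows the paper's own route exactly: cite the finite generation of the symbolic Rees algebra of a monomial ideal (the paper uses \cite[Theorem 3.2]{HHT07}) and then specialize Theorem \ref{val-noe-fil} to the filtration $\aa^{(\bullet)}$. Your extra remarks on the rationality and on the notational slip $\text{RV}(\aa_r)$ versus $\text{RV}(\aa^{(r)})$ are accurate but not needed beyond what the paper's two-line proof already contains.
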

\begin{proof}
It follows from  \cite[Theorem 3.2]{HHT07} that $\R(\aa^{(\bullet)})$ is Noetherian. Now, the assertion follows from \Cref{val-noe-fil}.
\end{proof}

Next, we obtain symbolic $F$-threshold of a square-free monomial ideal in term of monomial valuations given by minimal primes of that ideal. 
\begin{theorem}\label{sym-mon}
 Let $\aa$ be a square-free monomial ideal in $R$. Then, 
 $$C^{\mathfrak{m}}(\aa^{(\bullet)})= \min \left\{\frac{v_\mathfrak{p}(x_1 \cdots x_n)}{\vhat_\mathfrak{p}(\aa^{(\bullet)})}: \mathfrak{p} \in \text{Min}(\aa) \right\}=\text{ht}(\aa) ,$$ where for $\mathfrak{p} \in \text{Min}(\aa)$, $v_\mathfrak{p} $ is a monomial valuation defined as $v_\mathfrak{p}(x_1^{m_1} \cdots x_n^{m_n}) =\sum_{x_i \in \mathfrak{p}} m_i$ for all monomials $x_1^{m_1} \cdots x_n^{m_n}$ in $R$.
\end{theorem}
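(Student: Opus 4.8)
The plan is to combine three ingredients already available in the paper. First, for a squarefree monomial ideal $\aa$ we have the primary decomposition $\aa=\bigcap_{\mathfrak{p}\in\Min(\aa)}\mathfrak{p}$, and since each $\mathfrak{p}^{(\bullet)}=\mathfrak{p}^{\bullet}$, we get $\aa^{(\bullet)}=\bigcap_{\mathfrak{p}\in\Min(\aa)}\mathfrak{p}^{\bullet}$ as filtrations (this is the standard fact that symbolic powers of a squarefree monomial ideal are the intersections of powers of its minimal primes). Then \Cref{int-fil-mon} applies, because each $\mathfrak{p}^{\bullet}$ is a monomial filtration, and it gives
$$\C^{\mm}(\aa^{(\bullet)})=\min_{\mathfrak{p}\in\Min(\aa)}\C^{\mm}(\mathfrak{p}^{\bullet}).$$
So the problem reduces to computing $\C^{\mm}(\mathfrak{p}^{\bullet})$ for a monomial prime $\mathfrak{p}=(x_i:i\in S)$, and showing the valuative formula holds termwise.

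Second, I would compute $\C^{\mm}(\mathfrak{p}^{\bullet})$ for $\mathfrak{p}=(x_i:i\in S)$ directly via \Cref{tech-con}: $x_1^{p^e-1}\cdots x_n^{p^e-1}\in\mathfrak{p}^r$ if and only if $r\le |S|(p^e-1)$ (a monomial of degree $\sum_{i\in S}a_i$ in the variables of $\mathfrak{p}$ lies in $\mathfrak{p}^r$ iff $\sum_{i\in S}a_i\ge r$, and here each $a_i=p^e-1$). Hence $\nu^{\mm}_{\mathfrak{p}^{\bullet}}(p^e)=|S|(p^e-1)$, so $\C^{\mm}(\mathfrak{p}^{\bullet})=|S|=\hgt(\mathfrak{p})$. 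Alternatively one can invoke \Cref{Rees-valuation}: $\mathfrak{p}$ has a single Rees valuation, namely $v_{\mathfrak{p}}$ with $v_{\mathfrak{p}}(\mathfrak{p})=1$ and $v_{\mathfrak{p}}(x_1\cdots x_n)=|S|$, giving the same answer. Combining with the reduction above, $\C^{\mm}(\aa^{(\bullet)})=\min_{\mathfrak{p}\in\Min(\aa)}\hgt(\mathfrak{p})=\hgt(\aa)$.

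Third, I must check the middle equality, i.e.\ that $\C^{\mm}(\mathfrak{p}^{\bullet})=\dfrac{v_{\mathfrak{p}}(x_1\cdots x_n)}{\vhat_{\mathfrak{p}}(\aa^{(\bullet)})}$ for each $\mathfrak{p}\in\Min(\aa)$, so that the displayed min over minimal primes is literally the expression in the statement. Here $v_{\mathfrak{p}}(x_1\cdots x_n)=\hgt(\mathfrak{p})=|S|$, and I need $\vhat_{\mathfrak{p}}(\aa^{(\bullet)})=1$. Since $\aa^{(n)}\subseteq\mathfrak{p}^{(n)}=\mathfrak{p}^n$ gives $v_{\mathfrak{p}}(\aa^{(n)})\ge n$, hence $\vhat_{\mathfrak{p}}(\aa^{(\bullet)})\ge 1$; and by the remark after the skew-Waldschmidt definition $\vhat_{\mathfrak{p}}(\aa^{(\bullet)})\le v_{\mathfrak{p}}(\aa^{(1)})=v_{\mathfrak{p}}(\aa)=1$ because $\aa$ is squarefree and $\mathfrak{p}$ is a minimal prime (some minimal generator of $\aa$, being squarefree and not in $\mathfrak{p}^{2}$, has $v_{\mathfrak{p}}$-value $1$; more simply $v_{\mathfrak{p}}(\aa)\le 1$ since $\aa\not\subseteq \mathfrak{p}^{(2)}$ is automatic for $\aa=\bigcap\mathfrak{q}$ unless $|S|=1$, and when $|S|=1$ it is immediate). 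Thus $\vhat_{\mathfrak{p}}(\aa^{(\bullet)})=1$, and each term of the min equals $|S|=\hgt(\mathfrak{p})$.

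The only genuinely delicate point is the identity $\aa^{(\bullet)}=\bigcap_{\mathfrak{p}\in\Min(\aa)}\mathfrak{p}^{\bullet}$ at the level of filtrations — that is, $\aa^{(n)}=\bigcap_{\mathfrak{p}\in\Min(\aa)}\mathfrak{p}^{n}$ for all $n$. This is the classical description of symbolic powers of squarefree monomial ideals (each $\mathfrak{p}^{(n)}=\mathfrak{p}^n$ since $\mathfrak{p}$ is generated by a subset of the variables, and $\Ass(\aa)=\Min(\aa)$ for a squarefree monomial ideal), and I would cite it rather than reprove it. Everything else is a direct application of \Cref{tech-con}, \Cref{int-fil-mon}, \Cref{basic-prop}, and the elementary computation above; no further obstacle is expected.
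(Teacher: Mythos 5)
Your proof is correct, but it takes a genuinely different route from the paper's. The paper argues by a squeeze: it applies \Cref{ub-val} with the valuations $v_{\pp}$, using only the inequality $\vhat_{\pp}(\aa^{(\bullet)})\ge 1$ (from $\aa^{(s)}\subseteq \pp^{s}$) to get $\C^{\mm}(\aa^{(\bullet)})\le \min_{\pp}\frac{v_{\pp}(x_1\cdots x_n)}{\vhat_{\pp}(\aa^{(\bullet)})}\le \hgt(\aa)$, and then proves the lower bound $\C^{\mm}(\aa^{(\bullet)})\ge \hgt(\aa)$ directly from \Cref{tech-con}, since $x_1^{p^e-1}\cdots x_n^{p^e-1}\in \pp^{\hgt(\pp)(p^e-1)}\subseteq \pp^{\hgt(\aa)(p^e-1)}$ for every $\pp\in\Min(\aa)$, hence lies in $\aa^{(\hgt(\aa)(p^e-1))}$. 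The squeeze forces all three quantities to coincide, with no need for the intersection formula \Cref{int-fil-mon} and no need to know the exact value of $\vhat_{\pp}(\aa^{(\bullet)})$. You instead decompose $\aa^{(\bullet)}=\bigcap_{\pp}\pp^{\bullet}$, invoke (an iterated form of) \Cref{int-fil-mon} to reduce to $\C^{\mm}(\pp^{\bullet})=\hgt(\pp)$, and then verify the valuative expression termwise by showing $\vhat_{\pp}(\aa^{(\bullet)})=1$. Your route yields slightly more (each individual term of the minimum equals $\hgt(\pp)$) and illustrates \Cref{int-fil-mon}; the paper's route is shorter and never needs $\vhat_{\pp}(\aa^{(\bullet)})=1$. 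The reduction $\aa^{(n)}=\bigcap_{\pp}\pp^{n}$ and the extension of the two-filtration statement of \Cref{int-fil-mon} to finitely many factors by induction are both unproblematic and are used in exactly this way in the paper's example following \Cref{int-fil-mon}.

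One justification in your third step is garbled and happens to be load-bearing for your chosen route: a squarefree minimal generator can perfectly well lie in $\pp^{2}$ (e.g.\ $xy\in(x,y)^{2}$), and the assertion that ``$\aa\not\subseteq\pp^{(2)}$ is automatic unless $|S|=1$'' is not an argument. What you need is $v_{\pp}(\aa)=1$ for every $\pp\in\Min(\aa)$, and this is true precisely because $\aa$ is radical and $\pp$ is minimal over it: $\aa R_{\pp}$ is a radical $\pp R_{\pp}$-primary ideal, so $\aa R_{\pp}=\pp R_{\pp}\not\subseteq \pp^{2}R_{\pp}$ by Nakayama, i.e.\ $\aa\not\subseteq\pp^{2}$; combinatorially, minimality of the vertex cover corresponding to $\pp$ produces an edge meeting it in exactly one vertex, i.e.\ a generator with $v_{\pp}$-value $1$. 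With this point repaired (or bypassed altogether by quoting \Cref{ub-val}, which gives $\hgt(\aa)=\C^{\mm}(\aa^{(\bullet)})\le \min_{\pp}\frac{v_{\pp}(x_1\cdots x_n)}{\vhat_{\pp}(\aa^{(\bullet)})}$ once you know $\C^{\mm}(\aa^{(\bullet)})=\hgt(\aa)$), your argument is complete.
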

\begin{proof}
Note that  $v_\mathfrak{p}(x_1\cdots x_n) =\sum_{x_i \in \mathfrak{p}} 1 = \text{ht}(\mathfrak{p})$ for any $\mathfrak{p} \in \text{Min}(\aa)$.  Since $\aa^{(s)} \subseteq \mathfrak{p}^s$, $s= v_\mathfrak{p}(\mathfrak{p}^s) \le v_\mathfrak{p}(\aa^{(s)})$ for all $\mathfrak{p} \in \text{Min}(\aa).$ Therefore, $\vhat_\mathfrak{p}(\aa^{(\bullet)}) =\inf_{s\ge 1} \frac{v_\mathfrak{p}(\aa^{(s)})}{s} \ge 1$ for every $\mathfrak{p} \in \text{Min}(\aa).$ By \Cref{ub-val},  $$C^{\mathfrak{m}}(\aa^{(\bullet)})\le  \min \left\{\frac{v_\mathfrak{p}(x_1 \cdots x_n)}{\vhat_\mathfrak{p}(\aa^{(\bullet)})}: \mathfrak{p} \in \text{Min}(\aa) \right\}\le \text{ht}(\aa). $$  Next, take $r = \text{ht}(\aa)(p^e -1).$   Since  $x_{1}^{p^e-1}\cdots x_{n}^{p^e-1} \in  \mathfrak{p}^{\text{ht}(\mathfrak{p})(p^e-1)}$ for all $\mathfrak{p} \in \text{Min}(\aa)$,   $x_{1}^{p^e-1}\cdots x_{n}^{p^e-1} \in {\aa^{(r)}}$. Thus,  by \Cref{tech-con}, $\nu_{{\aa^{(\bullet)}}}^{\mathfrak{m}}(p^e) \ge \text{ht}(\aa) (p^e-1)$ for all $e \ge 0$.  Now, by \Cref{basic-prop} and 
applying limit $e$ tends to infinity to $\frac{\nu_{{\aa^{(\bullet)}}}^{\mathfrak{m}}(p^e) }{p^e}$, we get that $C^{\mathfrak{m}}({\aa^{(\bullet)}})\ge \text{ht}(\aa).$ Hence, the assertion follows. 
\end{proof}

One could observe that if $\aa$ is not a square-free monomial ideal, then Theorem \ref{sym-mon} may not hold, see \Cref{non-exam}. We end this section by characterizing  square-free monomial ideals for which the inequality in  \Cref{edge-f} $(2)$ becomes an equality. 

\begin{example}
Assume that $R$ is standard graded.  Let $\aa$ be a square-free monomial ideal in $R$ and  $\H$ be the associated hypergraph on the vertex set ${\{1,\ldots, n\}}$, i.e., $I_\H=\aa$.  We claim that $\C^{\mm}(\aa^{(\bullet)}) = \frac{n(\chi_f(\mathcal{H})-1)}{\chi_f(\mathcal{H})}$ if and only if  $\chi_f(\H)=\frac{n}{n-\tau(\H)}$, where $\tau(\H)$ is the minimum among cardinality of vertex covers of $\H$. Note that hypergraph with  $\chi_f(\H)=\frac{n}{n-\tau(\H)}$ exists, for example, any vertex transitive graph. The claim immediately follows from \Cref{sym-mon} and the fact that the minimal primes of ${I_\H}$ are in one to one correspondence with the minimal vertex covers of ${\H}$. 
\end{example}



\bibliographystyle{plain}  
	\bibliography{bilbo}

\end{document}